\numberwithin{equation}{section}
\numberwithin{figure}{section}
\newtheorem{theorem}{Theorem}[section]
\newtheorem*{theorem*}{Theorem}
\newtheorem{lemma}[theorem]{Lemma}
\newtheorem{prop}[theorem]{Proposition}
\newtheorem{corollary}[theorem]{Corollary}
\theoremstyle{definition}
\newtheorem{definition}[theorem]{Definition}
\newtheorem{example}[theorem]{Example}
\newtheorem{remark}[theorem]{Remark}
\newcommand\inner[1]{\langle #1\rangle}
\newcommand{\indicator}{\mathbb{1}}
\newcommand{\indicatorWithSetBrackets}[1]{\indicator_{\left\{ #1 \right\}}}
\newcommand{\EHI}{\hyperref[HarnackDef]{\mathrm{EHI}}}
\newcommand{\LargeEHI}{\hyperref[LargeHarDef]{\mathrm{LargeEHI}}}
\newcommand{\SmallEHI}{\hyperref[SmallHarDef]{\mathrm{SmallEHI}}}
\newcommand{\pFlip}{\mathbf{pFlip}}
\newcommand{\Uhat}{\widehat{\mathcal{U}}}
\newcommand{\PHI}{\mathrm{PHI}}
\title{\textbf{Counterexamples to elliptic Harnack inequality for isotropic unimodal L\'{e}vy processes}}
\author{Jens Malmquist\footnote{University of British Columbia, Vancouver, CA. Email: \url{jens@math.ubc.ca}} \quad Mathav Murugan\footnote{University of British Columbia, Vancouver, CA. Research partially supported by NSERC and the Canada research chairs program.
}}
\begin{document}

\maketitle

\begin{abstract}
We construct the first examples of subordinated Brownian motion (SBM) that do not satisfy the elliptic Harnack inequality. In our first theorem, we show that if $X=(X_t)_{t \geq 0}$ is an isotropic unimodal L\'{e}vy process, and $X$ satisfies certain criteria (involving the jump kernel of $X$ and the distribution of the location of the process upon first exiting balls of various sizes) then $X$ does not satisfy EHI. (Note that every SBM is an isotropic unimodal L\'{e}vy process.) We then check that many specific SBMs do indeed satisfy our criteria, and thus do not satisfy EHI.
\end{abstract}

\textbf{\textit{Keywords---}} elliptic Harnack inequality, isotropic unimodal L\'{e}vy process, subordinate Brownian motion, L\'{e}vy system formula.

\tableofcontents

\section{Introduction}

Subordinated Brownian motion plays a central role in the understanding of jump processes as it serves as a model example of jump processes analogous to the role played by Brownian motion for diffusions.
Heat kernel estimates and Harnack inequalities are often easier to obtain for subordinated Brownian motion. Using recent progress in stability of Harnack inequalities and heat kernel bounds for jump processes this leads to similar estimates for a large family of jump processes \cite[Section 5.2]{CKW}.

Harnack inequalities are the subject of significant research in probability, harmonic analysis, and partial differential equations. The \textit{elliptic Harnack inequality} ($\EHI$, see Section \ref{HarmonicAndHarnackSubsection}) applies to harmonic functions, and says that if a non-negative function $h$ is harmonic on a ball $B$, then for any points $x, y$ on a smaller ball concentric to $B$, the ratio $h(x)/h(y)$ is bounded both above and below. Applications of elliptic Harnack inequality include regularity of harmonic functions, estimates of heat kernel and Green's function.
There is an analogous inequality, the \textit{parabolic Harnack inequality} ($\PHI$), which implies H\"{o}lder continuity for caloric functions. Since every harmonic function lifts to a caloric function, the parabolic Harnack inequality implies the elliptic Harnack inequality.

Recall that a \textit{L\'{e}vy process} is a stochastic process with independent and stationary increments.
A \textit{subordinated Brownian motion} (or \textit{SBM}) is a process of the form $X=(X_t)_{t \geq 0} = (W(S_t))_{t \geq 0}$, where $S$ is a L\'{e}vy process on $[0, \infty)$ such that $S_0=0$, and $W$ is a standard Brownian motion on $\mathbb{R}^d$, independent of $S$.

There have been many results (\cite{KM}, \cite{G}, \cite{CKW}) showing that various classes of SBMs satisfy the elliptic Harnack inequality (or close variations of it). The settings of \cite{G} and (especially) \cite{CKW} are more general, but both apply to many SBMs.  Despite this progress, the following question remained open: \emph{Does every suborinated Brownian motion satisfy the elliptic Harnack inequality?} The goal of this paper is to provide a negative answer to this question. In this paper, we produce many counterexamples and give criteria that can be checked to verify that a subordinated Brownian motion fails to satisfy the $\EHI$. Our work can be viewed as a step towards the following motivating question: \emph{find necessary and sufficient conditions for a subordinated Brownian motion to satisfy the elliptic Harnack inequality.}

In fact, our results do not apply only to SBMs, but to the larger class of \textit{isotropic unimodal L\'{e}vy processes}. A \textit{L\'{e}vy process} is a stochastic process $X=(X_t)_{t \geq 0}$ with independent and stationary increments; in other words, a process such that
\begin{equation*}
    X_t-X_s \overset{d}{=} X_{t-s} \qquad\mbox{for all $t \geq s \geq 0$}
\end{equation*}
and
\begin{equation*}
    \left\{ X_{t_1}-X_{t_0}, X_{t_2}-X_{t_1}, \dots, X_{t_n}-X_{t_{n-1}} \right\} \qquad\mbox{are independent}\qquad\mbox{for all $t_0 \leq t_1 \leq \cdots \leq t_{n-1} \leq t_n$}.
\end{equation*}
Such a process is said to be \textit{isotropic unimodal} if for all $t>0$, there exists a non-increasing function $m_t : (0, \infty) \to [0, \infty)$ such that the increments of $X$ follow distribution
\begin{equation*}
    \mathbb{P}(X_t-X_0 \in A) = \int_A m_t(|x|) \, dx \qquad\mbox{for all measurable $A$ such that $0 \notin A$}.
\end{equation*}
More generally, a measure $\mu$ is called \textit{isotropic unimodal} if there exists a non-increasing $m:(0, \infty) \to [0, \infty)$ such that $\mu(dx) = m(|x|) \, dx$ for all $x \neq 0$, and a process is isotropic unimodal if its increments are.

Bass and Chen \cite[Section 3]{BC} give an example of a L\'{e}vy process on $\mathbb{R}^d$ that does not satisfy the elliptic Harnack inequality. The process they consider has a high degree of non-regularity (it is not isotropic unimodal, and its L\'{e}vy measure is singular with respect to the Lebesgue measure), and they exploit this in their proof. Grzywny and Kwa\'{s}nicki \cite[Example 5.5]{GK} give another process, this one with more regularity, that also fails to satisfy the elliptic Harnack inequality. The process considered in \cite[Example 5.5]{GK} is an isotropic unimodal L\'{e}vy process, but not a subordinated Brownian motion as the jump sizes are uniformly bounded from above which is crucial to their proof.

Our main argument is similar to those of \cite[Section 3]{BC} and \cite[Example 5.5]{GK}, with a significant amount of additional work to adapt them for processes with a higher degree of regularity. Our proof provides some insight into what kinds of non-regularity are necessary for $\EHI$ to be violated. Our most highly-regular counterexample is Example \ref{ContinuousExample}: a subordinated Brownian motion, with a subordinator whose L\'{e}vy measure is absolutely continuous with respect to the Lebesgue measure, with a density that is monotonically decreasing.

Let $X=(X_t)_{t \geq 0}$ be an isotropic unimodal L\'{e}vy process on $\mathbb{R}^d$. We will assume that $X$ is right-continuous, with left-limits (\textit{cadlag}). For all $t>0$, we will use the notation $X_{t-}$ or $X(t-)$ to denote the left-limit $\lim_{s \nearrow t} X_s$.

For all $x \in \mathbb{R}^d$, let $\mathbb{P}_x$ be the probability measure $\mathbb{P}\left( \cdot | X_0=x \right)$. Let $\mathbb{E}_x$ be the corresponding expectation.

For all $x_0 = \left(x^{(0)}_1, \dots, x^{(0)}_d \right) \in \mathbb{R}^d$ and $r>0$, let $B(x_0, r)$ denote the open Euclidean ball
\begin{equation*}
    B(x_0, r) := \left\{ (x_1, \dots, x_d) : \sqrt{\sum_{j=1}^d \left(x_j-x^{(0)}_j \right)^2} < r \right\}.
\end{equation*}
For all $x_0 = \left(x^{(0)}_1, \dots, x^{(0)}_{d-1} \right) \in \mathbb{R}^{d-1}$ and $r>0$, it will also help to consider the $(d-1)$-dimensional ball
\begin{equation} \label{Bd-1}
    B^{(d-1)}(x_0, r) := \left\{ (x_1, \dots, x_{d-1}) : \sqrt{\sum_{j=1}^{d-1} \left(x_j-x^{(0)}_j \right)^2} < r \right\}.
\end{equation}

We denote exit times of $X$ by
\begin{equation} \label{ExitTimeDef}
    \tau_U := \inf\{ t\geq 0: X_t \notin U\} \qquad\mbox{for all $U \subseteq \mathbb{R}^d$}.
\end{equation}

\subsection{Jump kernel} \label{JumpKernelSubsection}

Before we state our main result, let us briefly discuss the jump kernel.

Let $X=(X_t)_{t \geq 0}$ be an isotropic unimodal L\'{e}vy process on $\mathbb{R}^d$. Let $(\mathcal{E}, \mathcal{F})$ be the associated regular Dirichlet form on $(\mathcal{E}, \mathcal{F})$.

By the Beurling-Deny formula \cite[Theorem 3.2.1]{FOT}, $\mathcal{E}$ can be decomposed into a strongly local component, a jumping component, and a killing component:
\begin{equation*}
    \mathcal{E}(f, g) = \mathcal{E}^c(f, g) + \int_{\mathbb{R}^d \times \mathbb{R}^d \setminus \mathrm{diag}} (f(x)-f(y))(g(x)-g(y)) \, \widehat{J}(dx, dy) + \int_{\mathbb{R}^d} f(x) g(x) \, \widehat{k}(dx)
\end{equation*}
where $\mathrm{diag}$ denotes the diagonal of $\mathbb{R}^d \times \mathbb{R}^d$, $\mathcal{E}^c$ is a strongly local symmetric form, $\widehat{J}$ is a symmetric non-negative Radon measure (which we call the \textit{jumping measure}), and $\widehat{k}$ is a non-negative Radon measure (which we call the \textit{killing measure}).

Since $X$ is a L\'{e}vy process, $X$ is never killed, so the killing measure is identically $0$. However, $X$ can still have both a strongly local (diffusion) component and a jump component.

Since $X$ is isotropic unimodal, it follows from the $(1)\Longrightarrow(3)$ implication of \cite[Proposition on page 488]{W} that there exists a non-increasing function $j : (0, \infty) \to [0, \infty)$ such that $\widehat{J}(dx, dy) = j(|x-y|) \, dx \, dy$.

Given two distinct points $x, y \in \mathbb{R}^d$, let $J(x, y) := j(|x-y|)$. We refer to the function $J$ on $\mathbb{R}^d \times \mathbb{R}^d \setminus \mathrm{diag}$ as the \textit{jump kernel} of $X$.

Given a point $x \in \mathbb{R}^d$ and a measurable set $\mathcal{U} \subseteq \mathbb{R}^d \setminus \{x\}$, let
\begin{equation*}
    J(x, \mathcal{U}) := \int_U J(x, y) \, dy.
\end{equation*}

If $X(t) \neq X(t-)$ for some $t>0$, we say that the process $X$ has a jump of \textit{displacement} $X(t)-X(t-)$ at time $t$. We may also say that $X$ has a jump of \textit{magnitude} $|X(t)-X(t-)|$ at time $t$.

There is a probabilistic interpretation of the jump kernel. For all measurable $A \subseteq \mathbb{R}^d \setminus 0$ such that $0<J(0, A) < \infty$, jumps with displacement in $A$ occur according to a Poisson process with rate $J(0, A)$.

\subsection{Harmonic functions and elliptic Harnack inequality} \label{HarmonicAndHarnackSubsection}

There are many formulations of harmonicity in the context of general symmetric Hunt processes (see \cite{C}). For simplicity, we will use the same formulation as \cite{KM}.

Let $X$ be a non-trivial L\'{e}vy process on $\mathbb{R}^d$. By non-trivial we mean that the process in not identically zero.
Let $D$ be an open subset of $\mathbb{R}^d$. We say that a function $h : \mathbb{R}^d \to \mathbb{R}$ is harmonic (with respect to $X$) on $D$ if for all open sets $E$ whose closure is compact and contained in $D$, we have the mean-value property
\begin{equation*}
    h(x) = \mathbb{E}_x \left[ h(X_{\tau_E}) \right] \qquad\mbox{for all $x \in E$}.
\end{equation*}

We say that $X$ satisfies the \textit{elliptic Harnack inequality} ($\EHI$) if there exist a $C>0$ and a $\kappa \in (0, 1)$ such that for all $x_0 \in \mathbb{R}^d$ and $r>0$, if $h$ is a non-negative function that is harmonic on $B(x_0, r)$, then
\begin{equation}\label{HarnackDef}
    h(x) \leq C h(y) \qquad\mbox{for all $x, y \in B(x_0, \kappa r)$}.
\end{equation}
Note that $\EHI$ does not depend on $\kappa$. If $\EHI$ holds some $\kappa \in (0, 1)$, it will still hold if $\kappa$ is replaced by any other value in $(0, 1)$ (and $C$ is replaced with an appropriate value).

\subsection{Subordinated Brownian motion}

Recall that we are particularly interested in the case where $X$ is a subordinated Brownian. Therefore, let us review some of the basic properties of subordinated Brownian motions.

Let $S=(S_t)_{t \geq 0}$ be a non-decreasing, right-continuous L\'{e}vy process on $[0, \infty)$, such that $S_0=0$. By a L\'{e}vy process, we mean a stochastic process with independent and stationary increments.

Let us review some of the basic properties of the $S$. There exists a function $\phi : (0, \infty) \to (0, \infty)$ called the \textit{Laplace exponent} such that
\begin{equation*}
    \mathbb{E} e^{-\lambda S_t} = e^{-\phi(\lambda)\, t}  \qquad\mbox{for all $\lambda>0$, $t>0$}.
\end{equation*}
The Laplace exponent is a \textit{Bernstein function}, meaning that $\phi$ is smooth, $\phi^{(k)} \geq 0$ for all $k \in \{0, 1, 3, 5, 7, \dots\}$, and $\phi^{(k)} \leq 0$ for all $k \in \{2, 4, 6, \dots\}$. Also, $\phi$ has a representation of the form
\begin{equation*}
    \phi(\lambda) = \gamma \lambda + \int_{(0, \infty)} (1-e^{-\lambda x}) \, \mu(dx)
\end{equation*}
for some $\gamma \geq 0$ (which we call the \textit{drift}) and some measure $\mu$ on $(0, \infty)$ (which we call the \textit{L\'{e}vy measure}) such that $\int_{(0, \infty)} (1 \wedge x) \, \mu(dx) < \infty$. The drift and L\'{e}vy measure of $S$ have a probabilistic interpretation. The process $S$ increases due to both continuous linear growth (with rate $\gamma$) and jumps. For all measurable $A \subseteq (0, \infty)$ such that $0<\mu(A)<\infty$, the number of jumps of magnitude in $A$ that occur by time $t$ is Poisson($\mu(A) \cdot t$). For all $t>0$, $S_t$ is equal to $\gamma t$ plus the sum of the magnitudes of all the jumps that have occurred by time $t$.

Let $W=(W(t))_{t \geq 0}$ be a standard Brownian motion on $\mathbb{R}^d$, independent from $S$. The heat kernel of $W$ is
\begin{equation*}
    p^W(t, x, y) = (2\pi t)^{-d/2} \exp\left( - \frac{|x-y|^2}{2t} \right).
\end{equation*}

Let $X:= W(S_t)$ for all $t$. We refer to the process $X=(X_t)_{t \geq 0}$ as a \textit{subordinated Brownian motion} (SBM), and we refer to $S$ as the \textit{subordinator} of $X$.
Note that a SBM is uniquely determined by $\gamma$ (the drift of its subordinator) and $\mu$ (the L\'{e}vy measure of its subordinator).

If $\gamma=0$ and $\mu((0, \infty))>0$, then $X$ is a pure-jump process on $\mathbb{R}^d$. If $\gamma>0$ but $\mu((0,\infty))=0$, then $X$ is a Brownian motion, with $\gamma$ times the speed of a standard Brownian motion. If $\gamma>0$ and $\mu((0, \infty))>0$, then $X$ is a mixed diffusion/jump process.

By \cite[Theorem 2.1, formula (2.8)]{Oku}, the jump kernel of a SBM is
\begin{equation*}
    J(x, y) = \int_{(0, \infty)} (2\pi s)^{-d/2} \exp \left( - \frac{|x-y|^2}{2s} \right) \, \mu(ds).
\end{equation*}
As usual, let $j(r)$ denote the common value of $J(x, y)$ for all $x, y \in \mathbb{R}^d$ such that $|x-y|=r$:
\begin{equation} \label{JumpKernel}
    j(r) = \int_{(0, \infty)} (2\pi s)^{-d/2} \exp \left( - \frac{r^2}{2s} \right) \, \mu(ds).
\end{equation}
Note also that the jump kernel is non-increasing: $j(R) \leq j(r)$ whenever $R \geq r>0$.

Therefore, by the $(3)\Longrightarrow(1)$ implication of \cite[Proposition on page 488]{W}, every SBM is an isotropic unimodal L\'{e}vy process.

\subsection{Main result}

We are now ready to state our main result.

\begin{theorem} \label{MainResult}
Fix constants $c$ and $\alpha$ such that $0<\sqrt{29}\alpha \leq c < 1$.
Let $X=(X_t)_{t \geq 0}$ be an isotropic unimodal L\'{e}vy process, and suppose there exists a sequence $(R_n) \subseteq (0, \infty)$ such that
\begin{equation} \label{Thm1Conditions}
    \frac{j(R_n)}{j(cR_n)} \to 0 \qquad\mbox{and} \qquad \mathbb{P}_0 \left( X_{\tau_{B(0, \alpha R_n)}} \in B(0, 10 R_n) \right) \to 0.
\end{equation}
Then $X$ does not satisfy $\EHI$.
\end{theorem}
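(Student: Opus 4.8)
The strategy is a proof by contradiction: assume $X$ satisfies $\EHI$ with constants $C$ and $\kappa$, and show the two conditions in \eqref{Thm1Conditions} are incompatible with $\EHI$ for $n$ large. The key idea — following Bass--Chen and Grzywny--Kwaśnicki — is to construct, for each $n$, a nonnegative function $h_n$ that is harmonic on a ball of radius $\sim R_n$ but whose ratio $h_n(x)/h_n(y)$ between two points in the smaller concentric ball blows up as $n\to\infty$, contradicting the uniform bound $C$. The natural candidate is a harmonic function that "sees" a far-away region: set $h_n(z) = \mathbb{P}_z(X \text{ hits } A_n \text{ before exiting } D_n)$ for suitable sets, or more precisely work with $h_n(z) = \mathbb{E}_z[g(X_{\tau_{D_n}})]$ where $g$ is an indicator of a far region and $D_n$ is a ball of radius proportional to $R_n$; such an $h_n$ is automatically harmonic on $D_n$ by the mean-value property (one checks the required property on all $E$ with $\overline E \subset D_n$ using the strong Markov property).

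The two hypotheses play complementary roles. The second condition, $\mathbb{P}_0(X_{\tau_{B(0,\alpha R_n)}} \in B(0,10R_n)) \to 0$, says that when the process exits a ball of radius $\alpha R_n$ centered at the origin, it typically lands very far away — it makes a "long jump" over the annulus of inner radius $10R_n$. This will be used to show $h_n$ is \emph{large} at the center (the process started near $0$ overwhelmingly exits directly into the far region $A_n$ where $g=1$). The first condition, $j(R_n)/j(cR_n)\to 0$, controls the jump kernel: it says jumps of size $\sim R_n$ are overwhelmingly more likely to have magnitude $\sim cR_n$ than $\sim R_n$; combined with the Lévy system formula this lets me show $h_n$ is \emph{small} at a point near the boundary of the harmonicity ball, because from there the process is much more likely to exit "locally" (a short jump to an intermediate annulus, or continuous exit) than to jump into the far region. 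I expect I will need to choose the geometry carefully — the radii of $D_n$, the location of the intermediate annulus, and the far set $A_n$ — so that the inequality $\sqrt{29}\,\alpha \le c < 1$ is exactly what makes the nested scales $\alpha R_n \ll cR_n \ll 10R_n \ll \dots$ fit together; the peculiar constant $\sqrt{29}$ should emerge from a Pythagorean estimate comparing a jump displacement in $\mathbb{R}^d$ to the radii of two concentric balls.

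The concrete steps I would carry out, in order: (1) fix $n$, define the nested balls and annuli in terms of $R_n$ and the constants $c,\alpha$, and define $g$ as an indicator of the outer region and $h_n(z)=\mathbb{E}_z[g(X_{\tau_{D_n}})]$; (2) verify $h_n$ is nonnegative and harmonic on $D_n$; (3) prove the \emph{lower} bound $h_n(x_{\mathrm{center}}) \ge$ const, using the second hypothesis via the strong Markov property at time $\tau_{B(0,\alpha R_n)}$ and unimodality to argue the landing point, already far out, is likely in the target region; (4) prove the \emph{upper} bound $h_n(x_{\mathrm{bdry}}) \le \varepsilon_n \to 0$ for a point $x_{\mathrm{bdry}}$ in $B(x_0,\kappa(\operatorname{radius} D_n))$, by bounding the probability of reaching the far region against the probability of a "nearby" exit; here I apply the Lévy system / compensation formula to write the probability of jumping from the inner region into the far region as an integral of $J$, and bound this using $j(R_n)$, while bounding the competing local-exit probability from below using $j(cR_n)$ (or the exit distribution of a smaller ball), so the ratio is $O(j(R_n)/j(cR_n))\to 0$; (5) conclude $h_n(x_{\mathrm{center}})/h_n(x_{\mathrm{bdry}}) \to \infty$, contradicting \eqref{HarnackDef}.

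The main obstacle will be step (4): making the upper bound rigorous requires carefully controlling \emph{all} ways the process can get from $x_{\mathrm{bdry}}$ into the far region $A_n$ — not just a single long jump but also a sequence of jumps, or reaching $A_n$ after first wandering — and showing each contributes something $O(j(R_n)/j(cR_n))$ relative to the probability of a harmless local exit. This is where one needs the Lévy system formula in a genuinely quantitative way, together with the monotonicity of $j$ and the precise scale separation forced by $\sqrt{29}\,\alpha \le c$. Getting the lower bound in step (3) to survive the passage through $\tau_{B(0,\alpha R_n)}$ (ensuring the landing distribution really concentrates in $A_n$ and not in some intermediate shell) is the second delicate point, and is presumably why the hypothesis is phrased with the specific radius $10R_n$.
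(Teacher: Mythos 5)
Your overall framework (contradiction with EHI via exit-distribution harmonic functions, harmonicity checked through the strong Markov property, the L\'{e}vy system formula driving the upper bound, and the correct guess that $\sqrt{29}=\sqrt{5^2+2^2}$ is Pythagorean) is in the right spirit, but steps (3)--(5) contain a structural gap: no choice of the far set $A_n$ can deliver \emph{both} an absolute constant lower bound $h_n(x_{\mathrm{center}})\geq \mathrm{const}$ \emph{and} a vanishing upper bound at another point of the smaller concentric ball. The second hypothesis is translation invariant and says nothing about \emph{where} beyond $B(0,10R_n)$ the process lands. If $A_n$ is ``fat'' (say all of $B(0,10R_n)^c$, or a half-space portion of it, which is what your step (3) implicitly needs), then the very same long-jump argument applies from every point $y$ of the smaller ball: with probability tending to $1$ the exit of $D_n$ from $y$ coincides with the exit of $B(y,\alpha R_n)$ and is a single huge jump whose direction is isotropic from $y$, so $h_n(y)$ is bounded below by an absolute constant just like $h_n(0)$, and no contradiction with EHI results (one can check this concretely on the paper's own examples, e.g.\ Example \ref{LargeScaleDiracSumExample}, where even points close to $\partial D_n$ exit by a giant jump before they can wander out locally). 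If instead $A_n$ is a moderate target adjacent to $D_n$ at scale $R_n$ --- the only kind of target for which your step (4) ratio $O\!\left(j(R_n)/j(cR_n)\right)$ can possibly hold --- then the constant lower bound at the center is simply false: for such targets one has $h_n(x)\leq \mathbb{P}_0\left( X_{\tau_{B(0,\alpha R_n)}} \in B(0,10R_n)\right)\to 0$ for \emph{every} $x$ in the domain (this is Lemma \ref{IntermediateJumpHarGal} in the paper), so all values tend to $0$ and only a multiplicative comparison between two points can produce the contradiction.

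That multiplicative comparison is exactly the content of Proposition \ref{DiagramProp}, and it is the missing core of your plan. The paper takes $\mathcal{A}_n$ a cylinder of scale $R_n$, $\mathcal{U}_n$ an \emph{adjacent} cylinder, $f_n(x)=\mathbb{P}_x(X_{\tau_{\mathcal{A}_n}}\in\mathcal{U}_n)$, and proves $f_n(-x_n)\leq \left( \alpha^{-d} j(R_n)/j(cR_n) + 2\,\mathbb{P}_0\left( X_{\tau_{B(0,\alpha R_n)}} \in B(0,10R_n)\right)\right) \sup_{B(0,(1-\alpha/2)R_n)} f_n$, which contradicts EHI with no absolute lower bound needed. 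Three ingredients make this work which your outline does not supply: (i) the two comparison points are placed \emph{symmetrically} at $\pm x_n$, so that after applying the L\'{e}vy system formula on the small cylinders $A_\pm$ around them, translation invariance gives $\mathbb{E}_{-x_n}[\tau_{A_-}]=\mathbb{E}_{x_n}[\tau_{A_+}]$ and the unknown expected exit times cancel, leaving precisely $j(R_n)/j(cR_n)$; a generic ``local-exit probability bounded below by $j(cR_n)$'' at a single point does not achieve this cancellation; (ii) multi-step routes into the target --- the obstacle you correctly flag --- are killed by the second hypothesis through an elementary intermediate-jump lemma (Lemma \ref{IntermediateJumpE1E2}); and (iii) the paths that first move into the half of $\mathcal{A}_n$ nearer the target must have their contribution dominated by $\sup f_n$, which requires proving that $f_n$ at those intermediate locations is at most $f_n(0)$; the paper needs the reflection-coupling ``preferred side'' lemmas of Section \ref{PreferredSideSection} for this, and explicitly notes that the intuitive monotonicity ``closer to $\mathcal{U}_n$ means larger $f_n$'' is surprisingly hard to establish. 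Until your absolute lower bound is replaced by a relative comparison of this kind, step (5) cannot be reached.
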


The key ingredient for the proof of Theorem \ref{MainResult} is the following proposition, which is proved in Section \ref{DiagramSection}.

\begin{prop} \label{DiagramProp}
Fix some constants $c$ and $\alpha$ such that
$0<\sqrt{29}\alpha \leq c < 1$.
Let $X=(X_t)_{t \geq 0}$ be an isotropic unimodal L\'{e}vy process.
Let $R>0$. Let $\mathcal{A}$ and $\mathcal{U}$ be the cylinders
\begin{equation*}
    \mathcal{A} := (-R, R) \times B^{(d-1)}(0, R), \qquad \mathcal{U} := \Big((1+\alpha)R, (3+\alpha)R\Big) \times B^{(d-1)}(0, R).
\end{equation*}
Let $f$ be the function
\begin{equation*}
    f(x) := \mathbb{P}_x(X_{\tau_{\mathcal{A}}} \in \mathcal{U}).
\end{equation*}
Let $x_0 := (1-\alpha) R e_1$, where $e_1$ is the unit vector $(1, 0, 0, \dots, 0)$.

Then
\begin{equation}\label{DiagramPropResult}
    f(-x_0) \leq \left( \alpha^{-d} \frac{j(R)}{j(cR)} + 2 \mathbb{P}_0 \left( X_{\tau_{B(0, \alpha R)}} \in B(0, 10 R ) \right) \right) \sup_{B(0, (1-\alpha/2) R)} f.
\end{equation}
\end{prop}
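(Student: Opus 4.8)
The plan is to apply the strong Markov property at the first exit time $\tau:=\tau_{B(-x_0,\alpha R)}$ of the small ball about the starting point, and then to estimate the two resulting pieces separately; throughout, write $M:=\sup_{B(0,(1-\alpha/2)R)}f$. I would begin with the (routine) geometry. Since $|{-x_0}|=(1-\alpha)R$, the open ball $B(-x_0,\alpha R)$ lies in $\mathcal A$ (internally tangent to $\partial\mathcal A$ at $-Re_1$), so $X_t\in\mathcal A$ for $t<\tau$ and $\tau\le\tau_\mathcal A$, whence $f(-x_0)=\mathbb E_{-x_0}[f(X_\tau)]$ by the strong Markov property; moreover the $e_1$-projections $[-R,R]$ and $[(1+\alpha)R,(3+\alpha)R]$ of $\overline{\mathcal A}$ and $\overline{\mathcal U}$ are disjoint, $\mathcal A\cup\mathcal U\subseteq B\big(-x_0,\sqrt{17}\,R\big)\subseteq B(-x_0,10R)$, and $f\equiv\mathbb{1}_{\mathcal U}$ on $\mathbb R^d\setminus\mathcal A$. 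Splitting according to whether $X_\tau$ lands in $\mathcal U$, in $\mathcal A$, or in neither — the last case contributing $0$ — reduces the proposition to estimating $\mathbb P_{-x_0}(X_\tau\in\mathcal U)$ and $\mathbb E_{-x_0}[f(X_\tau);\,X_\tau\in\mathcal A]$.

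The first piece is the heart of the argument, and I would obtain it by comparing two applications of the L\'evy system formula. Each $w\in B(-x_0,\alpha R)$ has $w_1\le-(1-2\alpha)R$ and each $y\in\mathcal U$ has $y_1\ge(1+\alpha)R$, so $|w-y|\ge y_1-w_1\ge(2-\alpha)R\ge R$; hence, by the L\'evy system formula and the monotonicity of $j$,
\begin{equation*}
    \mathbb P_{-x_0}(X_\tau\in\mathcal U)=\mathbb E_{-x_0}\!\left[\int_0^{\tau}J(X_s,\mathcal U)\,ds\right]\le j(R)\,|\mathcal U|\;\mathbb E_0\big[\tau_{B(0,\alpha R)}\big].
\end{equation*}
For a matching lower bound I would use the sub-cylinder $\mathcal U^{\ast}:=\big((1+\alpha)R,(1+3\alpha)R\big)\times B^{(d-1)}(0,\alpha R)\subseteq\mathcal U$, which satisfies $|\mathcal U^{\ast}|=\alpha^{d}|\mathcal U|$ and — this is precisely where the constant $\sqrt{29}$ enters — is so placed that for $w\in B(x_0,\alpha R)$ and $y\in\mathcal U^{\ast}$ one has $0<y_1-w_1\le 5\alpha R$ and $|y'-w'|<2\alpha R$, hence $|w-y|<\sqrt{29}\,\alpha R\le cR$. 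Since $B(x_0,\alpha R)\subseteq\mathcal A$ and $x_0\in B(0,(1-\alpha/2)R)$, the strong Markov property at $\tau_{B(x_0,\alpha R)}$ and the L\'evy system formula give
\begin{equation*}
    M\ge f(x_0)\ge\mathbb P_{x_0}\!\big(X_{\tau_{B(x_0,\alpha R)}}\in\mathcal U^{\ast}\big)=\mathbb E_{x_0}\!\left[\int_0^{\tau_{B(x_0,\alpha R)}}\!\!J(X_s,\mathcal U^{\ast})\,ds\right]\ge j(cR)\,|\mathcal U^{\ast}|\;\mathbb E_0\big[\tau_{B(0,\alpha R)}\big].
\end{equation*}
The two expected exit times coincide by translation invariance and cancel upon dividing, leaving $\mathbb P_{-x_0}(X_\tau\in\mathcal U)\le\dfrac{j(R)\,|\mathcal U|}{j(cR)\,|\mathcal U^{\ast}|}\,M=\alpha^{-d}\,\dfrac{j(R)}{j(cR)}\,M$.

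For the second piece, which uses only the scattering information, I would argue as follows. Since $\mathcal A\subseteq B(-x_0,10R)$, translation invariance gives $\mathbb P_{-x_0}(X_\tau\in\mathcal A)\le\mathbb P_{-x_0}\big(X_\tau\in B(-x_0,10R)\big)=\mathbb P_0\big(X_{\tau_{B(0,\alpha R)}}\in B(0,10R)\big)=:p$. On $\{X_\tau\in B(0,(1-\alpha/2)R)\}$ one has $f(X_\tau)\le M$, contributing at most $pM$; on $\{X_\tau\in\mathcal A\setminus B(0,(1-\alpha/2)R)\}$ I would iterate the strong Markov property along the successive exits of radius-$\alpha R$ balls followed by the path while it remains in $\mathcal A$, observing that each such step returns $X$ to the corresponding $10R$-ball — hence into $\mathcal A\cup\mathcal U$, and otherwise the contribution vanishes — with probability at most $p$, while a step that ends by a jump into $\mathcal U$ out of a ball at distance $\ge R$ from $\mathcal U$ is controlled by a first-piece–type estimate. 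A careful accounting of these contributions — a geometric series in $p$ from the returns into $\mathcal A$, together with first-piece estimates for the jumps reaching $\mathcal U$ — should bound this term by $\tfrac{p}{1-p}M\le 2pM$ once $p<\tfrac12$; and if $p\ge\tfrac12$ the inequality is immediate, since $-x_0\in B(0,(1-\alpha/2)R)$ forces $f(-x_0)\le M\le 2pM$. Adding the two pieces yields \eqref{DiagramPropResult}.

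I expect the genuine obstacle to be calibrating the first estimate so that all constants collapse: $\mathcal U^{\ast}$ must be at once large enough that $|\mathcal U^{\ast}|$ is only a factor $\alpha^{d}$ below $|\mathcal U|$ and positioned so that every jump from $B(x_0,\alpha R)$ into $\mathcal U^{\ast}$ has length at most $cR$ (so that $j\ge j(cR)$ there). The constant $\sqrt{29}$ is exactly what the cylinder geometry forces, and the hypothesis $0<\sqrt{29}\,\alpha\le c<1$ is what permits $B(\pm x_0,\alpha R)\subseteq\mathcal A$, $\mathcal U^{\ast}\subseteq\mathcal U$ and $\sqrt{29}\,\alpha R\le cR<R$ to hold simultaneously. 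The iteration behind the second piece is more elementary, but its bookkeeping — in particular the boundary layer of $\mathcal A$, where radius-$\alpha R$ balls no longer fit inside $\mathcal A$, and checking that the accumulated constant comes out as exactly $2p$ — will also require care.
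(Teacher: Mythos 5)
Your first piece is correct and matches the paper's Lemma \ref{MainLsfFormula} almost verbatim, with exit from the ball $B(\pm x_0,\alpha R)$ replacing exit from the small cylinder $A_\pm$; the $\sqrt{29}$ geometry check and the cancellation of the expected exit times by translation invariance are exactly what the paper does. (Indeed, your $\mathcal U^{\ast}=\big((1+\alpha)R,(1+3\alpha)R\big)\times B^{(d-1)}(0,\alpha R)$ is what the paper's $U$ must actually be; as printed, the paper's $U$ has $|U|=\alpha^{d-1}|\mathcal U|$ and does not satisfy the claimed $\sqrt{29}\alpha R$ distance bound from $A_+$, so you have corrected a typo.)

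The second piece is where your argument has a genuine gap. Your claim that the iteration along radius-$\alpha R$ balls bounds $\mathbb E_{-x_0}\big[f(X_\tau);\,X_\tau\in\mathcal A\setminus B(0,(1-\alpha/2)R)\big]$ by $\tfrac{p}{1-p}M$ does not hold up, for two reasons. First, the ``first-piece--type estimate'' for a jump into $\mathcal U$ only produces a factor $j(R)/j(cR)$ when the current ball is at distance at least $R$ from $\mathcal U$, but the boundary shell of $\mathcal A$ contains points $y$ with $y_1$ arbitrarily close to $R$; from such $y$ the gap to $\mathcal U$ is only $\approx\alpha R$, jumps into $\mathcal U$ occur at the large rate $j(\alpha R)$, and the corresponding exit probability is not small. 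Second, and more fundamentally, the iteration alone bounds $f$ on the shell by a function of $p$ (namely $p/(1-p)$), not by a multiple of $M=\sup_{B(0,(1-\alpha/2)R)}f$; since $p$ can be much larger than $M$ (a priori, $p$ could be order $1$ while $M$ is tiny), the bound $\leq\tfrac{p}{1-p}M$ does not follow. There is no mechanism in your sketch that forces the wandering path to visit $B(0,(1-\alpha/2)R)$ before exiting $\mathcal A$, which is what you would need to inject the factor $M$.

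The paper closes exactly this gap with two separate ideas that your proposal lacks. For the right half $\mathcal A_r=\{x\in\mathcal A:\inner{x,e_1}>0\}$, one bounds the probability of ever landing there directly from $A_-$ (or from $B(-x_0,\alpha R)$) by a translation-invariance coupling: running the same increments from $x_0$ instead of $-x_0$ shifts the landing set into $\mathcal A_r+2x_0\subseteq\mathcal U$, giving $G(\mathcal A_r)\leq f(x_0)\leq M$ (Lemma \ref{GArBound}); combined with $\sup_{\mathcal A_r}f\leq p$ from Lemma \ref{IntermediateJumpE1E2} this yields a $pM$ term. For the left half $\mathcal A_\ell$, one must show $\sup_{\mathcal A_\ell}f\leq f(0)\leq M$, which is the technical heart of the argument; the paper does this via the ``preferred side'' reflection-coupling machinery of Section \ref{PreferredSideSection} (Lemmas \ref{PreferredSideZ} and \ref{PreferredSideW}, applied twice in Lemma \ref{HalBound}), using the isotropic unimodality of $X$ in an essential way. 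Neither ingredient can be replaced by the geometric-series bookkeeping you sketch, so the proposal as written does not prove the proposition.
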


Figure \ref{DiagramPropSetupSketch} contains a sketch of the set-up of Proposition \ref{DiagramProp}. For simplicity, we use $d=2$ in our sketch, so the ``cylinders" are simply squares.

\begin{figure}
\begin{framed}
\captionof{figure}{Sketch of the setting of Proposition \ref{DiagramProp}}
\label{DiagramPropSetupSketch}
\centering
\begin{tikzpicture}
    \draw[color=blue] (-3, -3) -- (3, -3) -- (3, 3) -- (-3, 3) -- (-3, -3);
    
    \draw[color=blue] (3.5, -3) -- (9.5, -3) -- (9.5, 3) -- (3.5, 3) -- (3.5, -3);
    
    \node[shape=circle, fill=black, scale=0.1, label=$0$] (0) at (0, 0) {$0$};
    
    \node[shape=circle, fill=black, scale=0.1, label={$-x_0$}] at (-2.5, 0) {$-x_0$};
    
    \node[shape=circle, fill=black, scale=0.1, label={$x_0$}] (x0) at (2.5, 0) {$x_0$};
    
    \node[color=blue] at (0, -3.3) {$\mathcal{A}$};
    
    \node[color=blue] at (6.5, -3.3) {$\mathcal{U}$};
    
    \node[shape=circle, fill=blue, scale=0.1] (1) at (3, 0) {};
    \node[shape=circle, fill=blue, scale=0.1] (1+a) at (3.5, 0) {};
    \node[shape=circle, fill=blue, scale=0.1] (3+a) at (9.5, 0) {};
    \node[shape=circle, fill=blue, scale=0.1] (1') at (3, -3) {};
    \node[shape=circle, fill=blue, scale=0.1] (1+a') at (3.5, -3) {};
    \node[shape=circle, fill=white, scale=0.1] (top) at (-3.5, 3) {};
    \node[shape=circle, fill=white, scale=0.1] (bottom) at (-3.5, -3) {};
    
    \path[<->, color=red] (0) edge node[below] {$(1-\alpha) R$} (x0);
    
    \path[<->, color=cyan] (x0) edge node[below] {$\alpha R$} (1);
    
    \path[<->, color=red] (1') edge node[below] {$\alpha R$} (1+a');
    
    \path[<->, color=cyan] (1+a) edge node[below] {$2 R$} (3+a);
    
    \path[<->, color=cyan] (top) edge node[left] {$2R$} (bottom);
\end{tikzpicture}
\end{framed}
\end{figure}
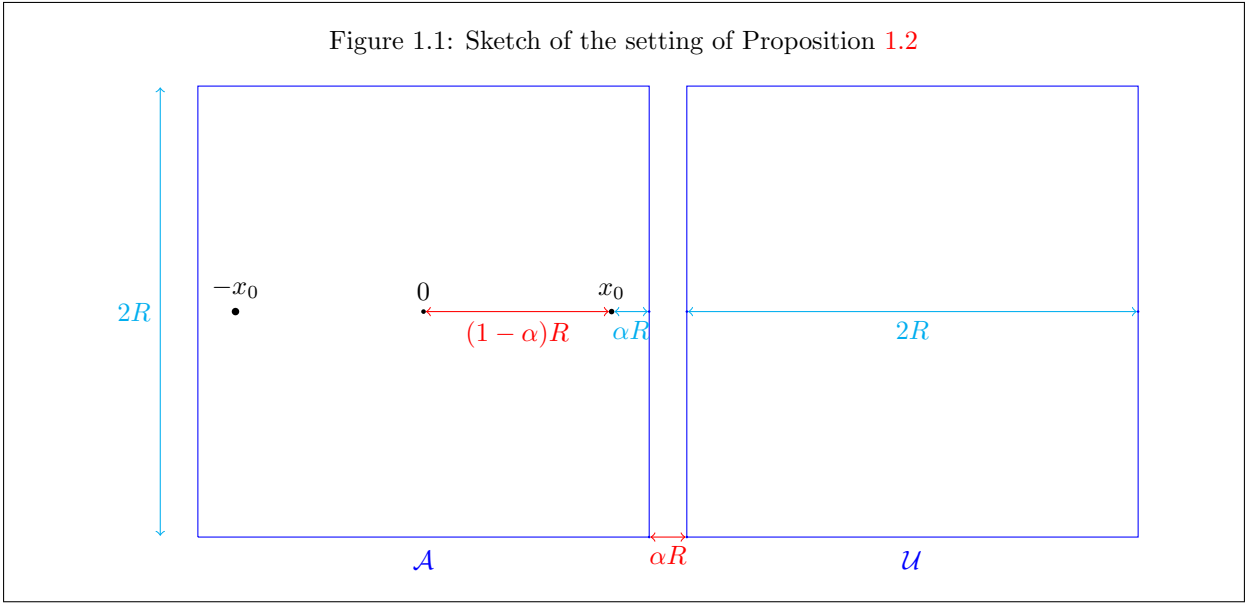

The proof of Theorem \ref{MainResult} from Proposition \ref{DiagramProp} is simple.

\begin{proof}[Proof of Theorem \ref{MainResult}]
Assume for the sake of contradiction that $X$ satisfies $\EHI$. Let $C$ be the constant from \eqref{HarnackDef}, for $\kappa=1-\alpha/2$. Whenever a function $h$ is non-negative and harmonic on a ball $B(x_0, r)$, we have
\begin{equation} \label{AssumeHarnack}
    h(x) \geq C^{-1} h(y) \qquad\mbox{for all} \quad x, y \in B \left( x_0, \left( 1 - \frac{\alpha}{2} \right) r \right).
\end{equation}
Let $(R_n)$ be a sequence satisfying \eqref{Thm1Conditions}. For each $n$, let $\mathcal{A}_n$ and $\mathcal{U}_n$ be the cylinders
\begin{equation*}
    \mathcal{A}_n := (-R_n, R_n) \times B^{(d-1)}(0, R_n), \qquad \mathcal{U}_n := \left((1+\alpha)R_n, (3+\alpha)R_n \right) \times B^{(d-1)}(0, R_n).
\end{equation*}
Let $x_n := (1-\alpha) R_n e_1$, and let $f_n$ be the function
\begin{equation*}
    f_n(x) := \mathbb{P}_x \left(X_{\tau_{\mathcal{A}_n}} \in \mathcal{U}_n \right).
\end{equation*}
(These are the $\mathcal{A}$, $\mathcal{U}$, $x_0$, and $f$ from Proposition \ref{DiagramProp} when $R=R_n$.)

We claim that $f_n$ is harmonic on $\mathcal{A}_n$. Let $E$ be an open set whose closure is compact and contained in $\mathcal{A}_n$. Fix $x \in E$. We would like to show that $f_n(x) = \mathbb{E}_x \left[ f_n(X_{\tau_E}) \right]$. Let $(\mathcal{F}_t)_{t\geq0}$ be the minimal complete, right-continuous filtration of $\sigma$-fields such that $X$ is adapted to $(\mathcal{F}_t)$. Then $\tau_E$ is an $(\mathcal{F}_t)$-stopping time, so
\begin{align*}
    f_n(x_n) &= \mathbb{P}_x \left(X_{\tau_{\mathcal{A}_n}} \in \mathcal{U}_n \right) \\
    &= \mathbb{E}_x \left[ \mathbb{P}_{\tau_E} \left(X_{\tau_{\mathcal{A}_n}} \in \mathcal{U}_n \right) \right] \quad\mbox{(by the Strong Markov property)}\\
    &= \mathbb{E}_x \left[ f_n \left(X_{\tau_E} \right) \right].
\end{align*}

By Proposition \ref{DiagramProp} and \eqref{Thm1Conditions},
\begin{equation*}
    f_n(-x_n) \leq o(1) \cdot \sup_{B(0, (1-\alpha/2) R)} f.
\end{equation*}
Thus, for sufficiently large $n$,
\begin{equation*}
    f_n(-x_n) < \frac{C^{-1}}{2} \sup_{B(0, (1-\alpha/2) R)} f.
\end{equation*}
However, since $-x_n \in B(0, (1-\alpha/2)R_n)$, and $f_n$ is harmonic on $B(0, R_n)$, \eqref{AssumeHarnack} guarantees that
\begin{equation*}
    f_n(-x_n) \geq \frac{C^{-1}}{2} \sup_{B(0, (1-\alpha/2) R)} f.
\end{equation*}
This is a contradiction. Thus, $X$ does not satisfy $\EHI$.
\end{proof}

The rest of the paper is organized as follows. In Section \ref{PreliminaryResultsSection}, we discuss some basic preliminary results. Among these is the L\'{e}vy system formula, a tool used in \cite[Section 3]{BC}, which we will make similar use of.
In Section \ref{PreferredSideSection}, we prove a technical lemma, involving a $(d-1)$-dimensional hyperplane that splits the space in two, and which side of the hyperplane the process is on.
In Section \ref{DiagramSection}, we prove Proposition \ref{DiagramProp}.
In Section \ref{CounterexamplesSection}, we construct some specific subordinated Brownian motions which violate $\EHI$, by providing examples that satisfy the condition \eqref{Thm1Conditions} in Theorem \ref{MainResult}.

\subsection{Acknowledgements}

We would like to thank Tomasz Grzywny for telling us about his example in \cite{GK}, and Stephen Zhang for his helpful technical advice on how to generate Figure \ref{JRatioGraphs}.

\section{Preliminary results} \label{PreliminaryResultsSection}

\subsection{L\'{e}vy system formula}

The first key ingredient, in both our proof of Proposition \ref{DiagramProp}, and Bass and Chen's counterexample in \cite[Section 3]{BC}, is the \textit{L\'{e}vy system formula}. More general forms of the L\'{e}vy system formula exist, but the following form will be sufficient for our purposes.

\begin{lemma} \label{LevySystemFormua}
Let $X=(X_t)_{t \geq 0}$ be a L\'{e}vy process on $\mathbb{R}^d$, adapted to a filtration $(\mathcal{F}_t)_{t \geq 0}$, and suppose $X$ has a jump kernel $J(x, y)$. Let $f$ be a non-negative measurable function on $\mathbb{R}^d \times \mathbb{R}^d$ that vanishes on the diagonal. Then, for any $x \in \mathbb{R}^d$ and any $(\mathcal{F}_t)$-stopping time $T$,
\begin{equation*}
    \mathbb{E}_x \left[ \sum_{s \leq T} f(X_{s-}, X_s) \right] = \mathbb{E}_x \left[ \int_0^T \int_{\mathbb{R}^d} f(X_s, y) J(X_t, y) \, dy \, ds \right].
\end{equation*}
\end{lemma}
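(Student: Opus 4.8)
The plan is to reduce the identity to the compensation formula for the Poisson random measure of jumps of $X$. Since the strongly local (diffusion) part of $X$ has continuous paths and the killing part is absent, neither contributes to a sum ranging over jump times, so only the jump component matters. Let $N(ds,dz)$ denote the jump measure of $X$, i.e.\ $N([0,t]\times A)=\#\{s\le t:\ X_s-X_{s-}\in A\}$ for Borel $A\subseteq\mathbb{R}^d\setminus\{0\}$. Because $X$ is a L\'evy process, $N$ is a Poisson random measure on $[0,\infty)\times(\mathbb{R}^d\setminus\{0\})$ with intensity $ds\,\nu(dz)$, where $\nu$ is the L\'evy measure of $X$; and by spatial homogeneity, together with the identification of the jumping measure $\widehat J$ discussed in Section \ref{JumpKernelSubsection}, one has $\nu(dz)=j(|z|)\,dz=J(0,z)\,dz$.

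First I would establish the formula for a deterministic time $T=t$. Setting $g(x,z):=f(x,x+z)$ (non-negative, measurable, vanishing at $z=0$), the left-hand side equals $\mathbb{E}_x\big[\int_{[0,t]}\int_{\mathbb{R}^d\setminus\{0\}}g(X_{s-},z)\,N(ds,dz)\big]$. Since $s\mapsto X_{s-}$ is left-continuous, hence predictable, the compensation (Campbell) formula for Poisson random measures gives
\[
\mathbb{E}_x\Big[\int_{[0,t]}\!\!\int_{\mathbb{R}^d\setminus\{0\}}g(X_{s-},z)\,N(ds,dz)\Big]
=\mathbb{E}_x\Big[\int_0^t\!\!\int_{\mathbb{R}^d\setminus\{0\}}g(X_{s-},z)\,\nu(dz)\,ds\Big].
\]
For each fixed $s$, the change of variables $y=X_{s-}+z$ turns $\int g(X_{s-},z)\,\nu(dz)$ into $\int_{\mathbb{R}^d}f(X_{s-},y)\,j(|y-X_{s-}|)\,dy=\int_{\mathbb{R}^d}f(X_{s-},y)\,J(X_{s-},y)\,dy$. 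Finally, $X$ has only countably many jumps, so $X_{s-}=X_s$ for Lebesgue-a.e.\ $s$, and replacing $X_{s-}$ by $X_s$ inside the $ds$-integral yields the stated identity for $T=t$.

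To pass to a general $(\mathcal{F}_t)$-stopping time $T$, I would first treat $f$ bounded and supported in $\{(x,y):|x-y|\ge\varepsilon\}$, so that both sides are finite. Then $M_t:=\sum_{s\le t}f(X_{s-},X_s)-\int_0^t\int_{\mathbb{R}^d}f(X_s,y)\,J(X_s,y)\,dy\,ds$ is a martingale: its conditional increment over $[s,t]$ has mean zero by the Markov property and the case $T=t$ already proved. Applying the optional stopping theorem to $M_{t\wedge T}$ and letting $t\to\infty$ (the $\varepsilon$-truncation bounds the jump rate, giving the integrability needed for the limit), we obtain the identity for such $f$. Removing the truncation is monotone convergence: approximate a general non-negative $f$ vanishing on the diagonal from below by $f_k(x,y):=(f(x,y)\wedge k)\,\mathbb{1}_{\{|x-y|\ge 1/k\}}\uparrow f$ and pass to the limit $k\to\infty$ on both sides. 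I expect the main obstacle to be precisely this extension to unbounded $T$ and unbounded $f$: one must justify optional stopping and the limit $t\to\infty$, for which the $\varepsilon$-truncation (ensuring the compensator of $M$ is locally integrable and $M_{t\wedge T}$ is uniformly integrable on bounded horizons) is the decisive device, with monotone convergence cleaning up afterward.
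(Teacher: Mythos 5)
Your argument is correct, but it is a genuinely different route from the paper's: the paper does not prove this lemma at all, it simply invokes the general theory of L\'evy systems for symmetric Hunt processes by citing \cite[Theorem 4.3.3(ii)]{CF}. You instead give a direct, self-contained proof that exploits the L\'evy structure: the jump measure of a L\'evy process is a Poisson random measure with intensity $ds\,\nu(dz)$, the compensation formula with the predictable integrand $X_{s-}$ gives the identity at deterministic times, translation invariance converts $\nu$ into $J(X_{s-},\cdot)$, and a truncation--martingale--optional stopping--monotone convergence scheme extends to general stopping times and general non-negative $f$. This is sound; two small points deserve a remark. First, for the compensation formula and the martingale step you need the jump measure to be a Poisson random measure \emph{with respect to the given filtration} $(\mathcal{F}_t)$, i.e.\ increments of $X$ after time $t$ should be independent of $\mathcal{F}_t$; this is the standard implicit hypothesis (satisfied by the augmented natural filtration used in the paper), but it is worth stating since the lemma as written allows an arbitrary filtration to which $X$ is adapted. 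Second, your identification $\nu(dz)=J(0,z)\,dz$ is exactly the probabilistic interpretation of the jump kernel asserted in Section \ref{JumpKernelSubsection}, so your proof is consistent with the paper's conventions; also note that in the final limiting step you do not actually need uniform integrability of $M_{t\wedge T}$ --- optional stopping at the bounded time $t\wedge T$ plus monotone convergence applied separately to the sum and to the compensator as $t\to\infty$ already yields the identity, possibly as $\infty=\infty$. The trade-off between the two approaches is the usual one: the citation covers far more general symmetric Markov processes where the jumps do not form a Poisson random measure with deterministic intensity, while your proof is elementary, transparent, and tailored to the L\'evy case actually needed here.
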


Note that $X$ does not need to be a pure-jump process for Lemma \ref{LevySystemFormua} to apply. There can be both a diffusion component and a jump component. For a proof in a much more general setting, see \cite[Theorem 4.3.3(ii)]{CF}.

In particular, if $\mathcal{A}$ and $\mathcal{U}$ are disjoint open sets, $f$ is the function $f(x, y) = \indicatorWithSetBrackets{x \in \mathcal{A}, y \in \mathcal{U}}$, and $T$ is the stopping time $\tau_{\mathcal{A}}$ (as defined in \eqref{ExitTimeDef}), then the L\'{e}vy system formula gives us
\begin{equation} \label{LsfApplied}
    \mathbb{P}_x \left( X_{\tau_{\mathcal{A}}} \in \mathcal{U} \right) = \mathbb{E}_x \left[ \int_0^{\tau_{\mathcal{A}}} J(X_s, \mathcal{U}) \, ds \right] \qquad\mbox{for all $x \in \mathcal{A}$}.
\end{equation}

Equation \ref{LsfApplied} will be used in several places in the proof of Proposition \ref{DiagramProp}, in which one of the primary objects is a function of the form $f(x) = \mathbb{P}_x \left( X_{\tau_{\mathcal{A}}} \in \mathcal{U} \right)$ on $\mathcal{A}$. 

\subsection{A lemma concerning jumps of intermediate size}

\FloatBarrier

The following observation will also be helpful in the proof of Proposition \ref{DiagramProp}.

\begin{figure}
\begin{framed}
\captionof{figure}{Sketch of the setting of Lemma \ref{IntermediateJumpE1E2}}
\label{IntermediateJumpE1E2Figure}
\centering
\begin{tikzpicture}[scale=0.5]
    \node[shape=circle, fill=black, scale=0.1, label=$x$] (0) at (0, 0) {$x$};

    \draw[color=blue] (-1, 3) -- (1, 3) -- (1, -3) -- (-1, -3) -- (-1, 3);
    \node[color=blue] at (0, 3.3) {$E_1$};
    
    \draw[color=blue] (3, 3) -- (5, 3) -- (5, -3) -- (3, -3) -- (3, 3);
    \node[color=blue] at (4, 3.3) {$E_2$};
    
    \draw[color=red, fill=none] (0, 0) circle (2);
    \node[color=red] at (-3.4, 0) {$B(x, r_1)$};
    
    \draw[color=red, fill=none] (0, 0) circle (6);
    \node[color=red] at (3.6, 6) {$B(x, r_2)$};
\end{tikzpicture}
\end{framed}
\end{figure}
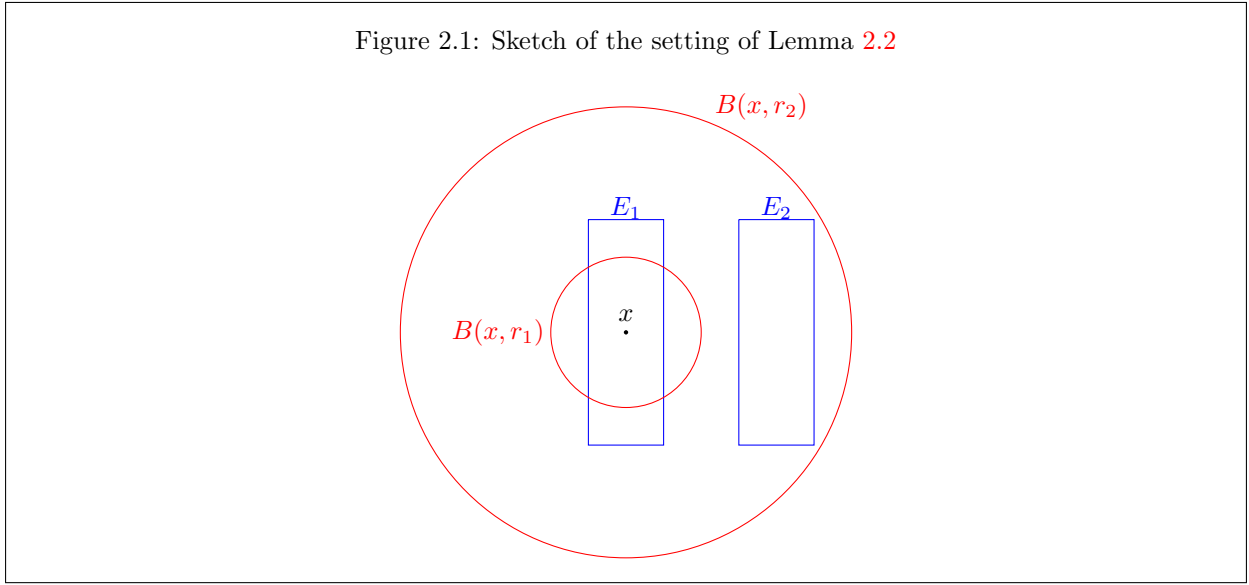

Consider the scenario shown in Figure \ref{IntermediateJumpE1E2Figure}.
There are two disjoint sets $E_1$ and $E_2$. We consider two concentric balls $B(x, r_1)$ and $B(x, r_2)$, both centered at a point $x \in E_1$. The smaller ball is small enough that it does not intersect $E_2$, while the larger ball is so large that it contains all of $E_1$ and $E_2$.

Suppose we are interested in the probability $\mathbb{P}_x \left( X_{\tau_{E_1}} \in E_2 \right)$. Recall from \eqref{ExitTimeDef} that $X_{\tau_{E_1}}$ is the first point outside of $E_1$ that the process $X$ reaches. As we will show, there is no way for $X_{\tau_{E_1}}$ to be in $E_2$ without $X_{\tau_{B(x, r_1))}}$ being in $B(x, r_2)$. Therefore,
\begin{equation*}
    \mathbb{P}_x \left( X_{\tau_{E_1}} \in E_2 \right) \leq \mathbb{P}_x \left( X_{\tau_{B(x, r_1)}} \in B(x, r_2) \right).
\end{equation*}
This allows us to replace a probability involving potentially complicated sets $E_1$ and $E_2$ with one involving nice open balls. This will be especially helpful in situations such as Theorem \ref{MainResult}, in which the location of $X$ at the exit time of a small ball $B(0, \alpha R_n)$ is unlikely to be inside even the much larger ball $B(0, 10 R_n)$.

This observation is turned into a proof in the following lemma. Because $X$ is translation-invariant, we may also recenter the balls at the origin.

\begin{lemma} \label{IntermediateJumpE1E2}
Let $X=(X_t)_{t \geq 0}$ be a L\'{e}vy process on a metric measure space. Suppose there exist two disjoint open sets $E_1$ and $E_2$, a point $x \in E_1$, and positive numbers $r_2>r_1>0$ such that
\begin{itemize}
    \item The ball $B(x, r_1)$ does not intersect $E_2$.
    \item The ball $B(x, r_2)$ contains both $E_1$ and $E_2$.
\end{itemize}
Then
\begin{equation*}
    \mathbb{P}_{x} \left( X_{\tau_{E_1}} \in E_2 \right) \leq \mathbb{P}_0 \left( X_{\tau_{B(0, r_1)}} \in B(0, r_2) \right).
\end{equation*}
\end{lemma}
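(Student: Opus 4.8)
The plan is to reduce the event $\{X_{\tau_{E_1}} \in E_2\}$ to an event about the process exiting the small ball $B(x,r_1)$ and then use translation invariance to recenter at the origin. First I would argue pathwise: suppose $\omega$ is a sample path started at $x$ with $X_{\tau_{E_1}}(\omega) \in E_2$. Since $X$ is cadlag and $x \in E_1$ which is open, $\tau_{E_1} > 0$, and for all $s < \tau_{E_1}$ we have $X_s \in E_1 \subseteq B(x,r_2)$. Now consider the exit time $\tau_{B(x,r_1)}$ of the smaller ball. Because $B(x,r_1)$ does not meet $E_2$ while $X_{\tau_{E_1}} \in E_2$, the process must leave $B(x,r_1)$ no later than it leaves $E_1$; more precisely I would show $\tau_{B(x,r_1)} \le \tau_{E_1}$. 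Indeed, if $\tau_{B(x,r_1)} > \tau_{E_1}$ then $X_{\tau_{E_1}} \in \overline{B(x,r_1)}$, but $X_{\tau_{E_1}} \in E_2$ and $E_2$ is disjoint from $B(x,r_1)$; one must be slightly careful that the exit point could land on the boundary sphere, so I would phrase the hypothesis-usage as: the closure of $B(x,r_1)$, or at least the relevant exit point, avoids $E_2$ — which follows since $E_2$ is open and disjoint from $B(x,r_1)$, hence also from a neighborhood, so in fact $\overline{B(x,r_1)} \cap E_2 = \emptyset$ provided we read the hypothesis "$B(x,r_1)$ does not intersect $E_2$" together with openness appropriately. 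Assuming this, $\tau_{B(x,r_1)} \le \tau_{E_1}$ on the event in question.

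Next I would establish that on this event $X_{\tau_{B(x,r_1)}} \in B(x,r_2)$. There are two cases. If $\tau_{B(x,r_1)} < \tau_{E_1}$, then at time $\tau_{B(x,r_1)}$ the process is still inside $E_1 \subseteq B(x,r_2)$ (using $s < \tau_{E_1} \Rightarrow X_s \in E_1$, and the exit point of the closed-complement definition lands via a jump from inside $E_1$, so either $X_{\tau_{B(x,r_1)}} \in E_1$ or it is a jump target; in either subcase I need it to be in $B(x,r_2)$). If $\tau_{B(x,r_1)} = \tau_{E_1}$, then $X_{\tau_{B(x,r_1)}} = X_{\tau_{E_1}} \in E_2 \subseteq B(x,r_2)$. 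Either way $X_{\tau_{B(x,r_1)}} \in B(x,r_2)$. This gives the pathwise inclusion of events
\begin{equation*}
\{X_{\tau_{E_1}} \in E_2\} \subseteq \{X_{\tau_{B(x,r_1)}} \in B(x,r_2)\},
\end{equation*}
and taking $\mathbb{P}_x$ of both sides yields $\mathbb{P}_x(X_{\tau_{E_1}} \in E_2) \le \mathbb{P}_x(X_{\tau_{B(x,r_1)}} \in B(x,r_2))$.

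Finally I would invoke translation invariance of the Lévy process: the law of $(X_t - x)_{t \ge 0}$ under $\mathbb{P}_x$ equals the law of $(X_t)_{t\ge0}$ under $\mathbb{P}_0$, and exit times and the membership events transform accordingly, so $\mathbb{P}_x(X_{\tau_{B(x,r_1)}} \in B(x,r_2)) = \mathbb{P}_0(X_{\tau_{B(0,r_1)}} \in B(0,r_2))$, completing the proof.

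The main obstacle I anticipate is the careful handling of the exit point when $X$ leaves the small ball by a jump: the displacement at the exit time could in principle be large, so $X_{\tau_{B(x,r_1)}}$ need not lie in $B(x,r_2)$ merely because the pre-jump position does. The resolution is precisely the case split above — either the jump out of $B(x,r_1)$ is also the jump out of $E_1$ (hence lands in $E_2 \subseteq B(x,r_2)$), or it happens strictly before $\tau_{E_1}$, in which case the post-jump position is still in $E_1 \subseteq B(x,r_2)$ since the path remains in $E_1$ throughout $[0,\tau_{E_1})$. Making this dichotomy airtight, and checking the boundary-point edge cases in the definition of $\tau$, is where the real care is needed; everything else is routine.
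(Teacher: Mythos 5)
Your proof is correct and follows essentially the same approach as the paper's: a pathwise case split on whether $\tau_{B(x,r_1)} < \tau_{E_1}$ or $\tau_{B(x,r_1)} = \tau_{E_1}$ (after ruling out $\tau_{B(x,r_1)} > \tau_{E_1}$ because that would force $X_{\tau_{E_1}} \in B(x,r_1)$, contradicting $X_{\tau_{E_1}} \in E_2$), yielding the event inclusion $\{X_{\tau_{E_1}} \in E_2\} \subseteq \{X_{\tau_{B(x,r_1)}} \in B(x,r_2)\}$ under $\mathbb{P}_x$, followed by translation invariance. The only stylistic difference is that you spell out the boundary-point and jump-exit edge cases more explicitly than the paper does, which is fine — though note the first case is cleaner than you make it sound: when $\tau_{B(x,r_1)} < \tau_{E_1}$, the fact $X_{\tau_{B(x,r_1)}} \in E_1 \subseteq B(x,r_2)$ follows immediately from $X_s \in E_1$ for all $s < \tau_{E_1}$, with no separate subcase for jump targets needed.
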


\begin{proof}
First, we will show that
\begin{equation} \label{EventsInsteadOfProbabilities}
    \left\{ X_0=x \right\} \cap \left\{ X_{\tau_{E_1}} \in E_2 \right\} \subseteq \left\{ X_{\tau_{B(x, r_1)}} \in B(x, r_2) \right\}.
\end{equation}
Suppose the process begins at $x$, and $X_{\tau_{E_1}}$ is in $E_2$. There are two ways that this can happen: either the process exits
$B(x, r_1)$ before it exits $E_1$, and then enters $E_2$ at the moment if first exits $E_1$; or the process leaves $B(x, r_1)$ and $E_1$ at the same time, and enters $E_2$ at the moment it does so. 
In the first case, $X_{\tau_{B(x, r_1)}} \in E_1$. In the second case, $X_{\tau_{B(x, r_1)}} \in E_2$. Since both $E_1$ and $E_2$ are contained in $B(x, r_2)$, we have $X_{\tau_{B(x, r_1)}} \in B(x, r_2)$ either way. This completes the proof of \eqref{EventsInsteadOfProbabilities}.

By taking probabilities of these events, and then applying translation-invariance to recenter,
\begin{equation*}
    \mathbb{P}_x \left( X_{\tau_{E_1}} \in E_2 \right) \leq \mathbb{P}_x \left( X_{\tau_{B(x, r_1)}} \in B(x, r_2) \right) = \mathbb{P}_0 \left( X_{\tau_{B(0, r_1)}} \in B(0, r_2) \right).
\end{equation*}
\end{proof}

\FloatBarrier

\section{``Preferred side" lemmas} \label{PreferredSideSection}

In this section, we prove two technical results, which we call the ``Preferred side" lemmas (for reasons that will be made clear). One of these will be used in the proof of Proposition \ref{DiagramProp}.

Let us first describe the setting. Let $H$ be a $(d-1)$-dimensional subspace of $\mathbb{R}^d$. Note that $H$ divides $\mathbb{R}^d$ into two half-spaces; let us call these half-spaces $V$ and $W$. (For simplicity, we can take $V$ to be closed and $W$ to be open, so the two half-spaces partition the full space, and $H \subseteq V$.)

Fix $H$, $V$, and $W$ for the remander of this section. Let us call $V$ the ``preferred side." The results of this section will concern situations in which $V$ is ``favored" over $W$ in some way: the process begins in $V$, a certain function takes larger values in $V$ than in $W$, or a certain set has a larger intersection with $V$ than with $W$. From these asymmetries, we derive results that are intuitive, but technically difficult to prove.

First, let us introduce some notation.
For all $x \in \mathbb{R}^d$, let $x'$ denote the reflection of $x$ across the hyperplane $H$. Also, let
\begin{equation} \label{VarphiDef}
    \varphi(x) := \left\{ \begin{matrix}
        x &:& \mbox{if $x \in V$}\\
        x' &:& \mbox{if $x \in W$}.
    \end{matrix}\right.
\end{equation}
In other words, $\varphi(x)$ is equal to whichever one of $x$ or $x'$ is in the preferred side $V$.

Consider the process $\varphi(X) = (\varphi(X_t))_{t \geq 0}$. Let $(\mathcal{G}_t)_{t \geq 0}$ be the smallest complete, right-continuous filtration that $\varphi(X)$ is adapted to. This process behaves somewhat analogously to reflected Brownian motion, which always stays on one side of a boundary (the difference being that $\varphi(X)$ can have jumps in addition to diffusion).

If we look only at the history of $\varphi(X)$, we lose track of which side of $H$ the original process $X$ is on.
However, the following lemma tells us that if $X$ begins on the preferred side, and we condition on the history of $\varphi(X)$, then for any $(\mathcal{G}_t)$-stopping time $T$, $X_T$ is more likely than not to be on the preferred side.

\begin{lemma} \label{PreferredSideZ}
Let $X=(X_t)_{t \geq 0}$ be an isotropic unimodal L\'{e}vy process on $\mathbb{R}^d$.
Let $H$ be a $(d-1)$-dimensional hyperplane in $\mathbb{R}^d$, and let $V$ and $W$ be the two half-spaces that $H$ divides $\mathbb{R}^d$ into, with $V$ closed and $W$ open.
For all $x \in \mathbb{R}^d$, let $x'$ denote the reflection of $x$ across $H$.
Let $\varphi(X) = (\varphi(X_t))_{t \geq 0}$, where $\varphi$ is as defined in \eqref{VarphiDef}.
Let $(\mathcal{G}_t)_{t \geq 0}$ be the minimal complete, right-continuous filtration such that $\varphi(X)$ is adapted to $(\mathcal{G}_t)$.

Then for any $v \in V$ and $t \geq 0$,
\begin{equation*}
    \mathbb{P}_v \left( X_t \in V \left| \mathcal{G}_t \right.\right) \geq \frac12 \qquad\mbox{almost surely}.
\end{equation*}

Furthermore, let $E$ be a pre-compact subset of $V$, which is open in the relative topology on $V$.
Let $\tau^{\varphi(X)}_E:=\inf\{ t: \varphi(X_t) \notin E\}$.
Then for any $v \in V$,
\begin{equation*}
    \mathbb{P}_v \left( X_{\tau^{\varphi(X)}_E} \in V \left| \mathcal{G}_{\tau^{\varphi(X)}_E} \right. \right) \geq \frac12 \qquad\mbox{almost surely}.
\end{equation*}
\end{lemma}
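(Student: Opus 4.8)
The plan is to exploit a symmetry/coupling argument built around the reflection map $x \mapsto x'$ across $H$. The key observation is that, conditionally on the entire path of $\varphi(X)$, the original process $X$ is obtained by independently choosing, at each ``branch point'', whether to follow the preferred-side copy or its reflection, and that isotropic unimodality makes the preferred-side choice at least as likely as the reflected one. Concretely, I would first prove the unconditional-in-time statement $\mathbb{P}_v(X_t \in V \mid \mathcal{G}_t) \geq \tfrac12$, and then bootstrap to the stopping-time version via the optional stopping / strong Markov machinery.

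For the first part, I would argue as follows. Fix $v \in V$ and $t \geq 0$. Consider the two processes $X$ and its reflection $X'=(X'_s)_{s\ge0}$ (where $X'_s := (X_s)'$), both started from $v$ and $v'$ respectively; note $\varphi(X) = \varphi(X')$ has the same law under $\mathbb{P}_v$ as... no, more carefully: the map $\varphi$ is $2$-to-$1$ off $H$, so I want to show that the conditional law of $X_t$ given $\mathcal{G}_t = \sigma(\varphi(X_s): s \le t)$ puts mass $\geq 1/2$ on $V$. The cleanest route: by isotropy, the transition density $p_t(x,y) = m_t(|x-y|)$ is invariant under the reflection $y \mapsto y'$ composed with $x \mapsto x'$, i.e. $p_t(x,y) = p_t(x',y')$, and unimodality gives $p_t(x,y) \geq p_t(x,y')$ whenever $x,y$ are on the same side $V$ and $y'$ is the reflection — because $|x - y| \leq |x - y'|$ when $x$ and $y$ lie in the same closed half-space. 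This pointwise domination of densities is the engine. One then writes the conditional probability $\mathbb{P}_v(X_t \in V \mid \mathcal{G}_t)$ as a ratio of the ``preferred-side mass'' to the total mass along the conditioned trajectory and invokes this domination at the last step; more rigorously, one decomposes according to the (random, discrete) set of jump times across $H$ and uses the Markov property at the last such time together with the density inequality on the final increment.

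I would then upgrade to the $(\mathcal{G}_t)$-stopping-time version $\mathbb{P}_v(X_{\tau^{\varphi(X)}_E} \in V \mid \mathcal{G}_{\tau^{\varphi(X)}_E}) \geq \tfrac12$. Since $\tau := \tau^{\varphi(X)}_E$ is a stopping time with respect to $(\mathcal{G}_t)$ and the first-part inequality holds for every fixed $t$, a standard approximation of $\tau$ by discrete stopping times $\tau_n := 2^{-n}\lceil 2^n \tau\rceil$ from above, combined with right-continuity of $X$ (cadlag) and of the filtration, and the fact that $\{X_{\tau_n} \in V\}$ events can be controlled via the fixed-$t$ bound conditioned on $\mathcal{G}_{\tau_n}$, should push the inequality through to $\tau$ in the limit. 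One subtlety: $V$ is closed, so $X_{\tau_n} \to X_\tau$ with $X_{\tau_n} \in V$ does give $X_\tau \in V$, which is why the problem statement is phrased with $V$ closed — this is used crucially in the limiting step. Alternatively, and perhaps more transparently, I would set up an explicit coupling: build $X$ from $\varphi(X)$ by, at each crossing of $H$, flipping a biased coin (bias $\geq 1/2$ toward $V$, by the density inequality above), realized as a sequence of independent decisions; then $\{X_\tau \in V\}$ contains the event ``the coin at the last decision before $\tau$ landed on $V$,'' which has conditional probability $\geq 1/2$.

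The main obstacle I anticipate is making the heuristic ``$\varphi(X)$ loses exactly the side-information, which is an independent fair-ish coin at each crossing'' into a rigorous measure-theoretic statement — in particular, identifying the regular conditional distribution of $X$ given $\mathcal{G}_t$ and verifying that the reflection map genuinely acts as claimed on it. For a pure-jump process one can list the crossing times of $H$ and argue inductively, but $X$ may have a Brownian component, so the ``crossings'' form an uncountable set and the clean coin-flip picture breaks down; there the density-domination argument $p_t(x,y) \geq p_t(x,y')$ (for $x,y$ on the same side) applied to the last increment before time $t$, pushed through the strong Markov property, is the robust substitute, and verifying it carries over correctly under conditioning on $\mathcal{G}_t$ (rather than on the finer $\sigma(X_s : s\le t)$) is the delicate point.
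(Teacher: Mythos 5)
Your opening picture---reconstruct $X$ from the folded path $\varphi(X)$ by independent randomizations at the ``branch points,'' with unimodality supplying a bias toward $V$---is exactly the paper's strategy, and your dyadic-discretization plan for the stopping-time upgrade could be made to work (the paper instead just reruns the fixed-time argument at $\tau^Z_E$, which is a stopping time for the folded filtration). The genuine gap is in the fixed-time statement itself. Conditioning on all of $\mathcal{G}_t$ (the whole folded path) is not governed by the transition-density inequality $p_t(x,y)\geq p_t(x,y')$ applied to a ``final increment.'' Given the folded path, the side of $X_t$ is determined by the cumulative parity of side-flips along the path: at each jump of $\varphi(X)$ from $x$ to $y$, the conditional probability of a flip is $J(x,y')/\left(J(x,y)+J(x,y')\right)\leq \tfrac12$ (a jump-kernel/L\'{e}vy-system quantity, see \eqref{pFlipDef}, not a statement about $p_t$), these flip decisions are conditionally independent across jumps, and at any time the folded process touches $H$ the side is re-randomized symmetrically. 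What is missing from your proposal is the parity ingredient: for independent Bernoulli variables with parameters $a_i\leq\tfrac12$, the sum is even with probability at least $\tfrac12$ (the paper isolates this as Lemma \ref{BernoulliSumEven}, via $\mathbb{E}\bigl[(-1)^{\sum_i\eta_i}\bigr]=\prod_i(1-2a_i)\geq0$). Your substitute devices do not fill this hole: the ``last crossing time'' is not a stopping time, so the strong Markov property cannot be invoked there, and when $X$ has a Gaussian component the crossings are not discrete---the paper deals with diffusive side-switching not by density domination but by noting that at visits of $\varphi(X)$ to $H$ the two sides are exactly symmetric, so the side is a fair coin from that time on.

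Your alternative coupling shortcut is incorrect as stated. In any coupling that reproduces the law of $X$, the independent uniforms must decide whether a \emph{flip} occurs (with flip probability at most $\tfrac12$), not which side the process lands on: the probability of landing in $V$ at a given jump of the folded process depends on which side $X$ occupied just before that jump, so independently re-randomizing the side at each crossing would change the law of $X$. Consequently the event ``the coin at the last decision before $\tau$ landed on $V$'' is neither the right object in the correct coupling nor contained in $\{X_\tau\in V\}$: if an odd number of flips occurred earlier, with no intervening visit to $H$, then ``no flip'' at the last jump leaves $X$ in $W$. The correct conclusion requires applying the parity lemma to \emph{all} flips since the last visit to $H$ (or since time $0$), together with $J(x,y')\leq J(x,y)$ for $x,y\in V$; this is precisely how the paper's proof of Lemma \ref{PreferredSideZ} goes, and without that step your bound of $\tfrac12$ does not follow.
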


We do not prove Lemma \ref{PreferredSideZ} just yet, since it is necessary to develop some more machinery first, but let us briefly summarize how the proof will unfold. We will construct a joint process $(Y_t, Z_t)_{t \geq 0}$, which shares the law of $(X_t, \varphi(X_t))_{t \geq 0}$, but where $Z$ is constructed first. Then, in order to determine $Y$, we must choose at every jump in $Z$ whether $Y_t$ jumps across $H$ or not; we determine this using a random structure independent from $Z$, reverse-engineered so that the law of $Y$ matches that of $X$. Then we can use the random structure independent of $Z$ to calculate $\mathbb{P}_v \left( X_T \in V \left| \mathcal{G} \right. \right)$.

We will then use Lemma \ref{PreferredSideZ} to prove the following result. To put it imprecisely, this result says that if there exists an open set $D$ which has a larger intersection with the preferred side than the non-preferred side, and a non-negative function $g$ on $D$ which takes larger values on the preferred side, then $\int_0^{\tau_D} g(X_t) \, dt$ should be larger for initial values on the preferred side than on the non-preferred side.

\begin{lemma} \label{PreferredSideW}
Let $X=(X_t)_{t \geq 0}$ be an isotropic unimodal L\'{e}vy process on $\mathbb{R}^d$.
Let $H$ be a $(d-1)$-dimensional hyperplane in $\mathbb{R}^d$, and let $V$ and $W$ be the two half-spaces that $H$ divides $\mathbb{R}^d$ into, with $V$ closed and $W$ open.
For all $x \in \mathbb{R}^d$, let $x'$ denote the reflection of $x$ across $H$, and
let $\varphi(x)$ be as defined in \eqref{VarphiDef}.

Let $D$ be an open subset of $\mathbb{R}^d$, such that
for all $w \in W$,
\begin{equation} \label{DUlargerThanDV}
    w \in D \Longrightarrow w' \in D.
\end{equation}
Let $g:D \to [0, \infty)$ be a measurable function such that for all $w \in W$,
\begin{equation} \label{GLargerOnU}
    w \in D \Longrightarrow g(w) \leq g(w').
\end{equation}

Then, for all $w \in D \cap W$,
\begin{equation*}
    \mathbb{E}_{w} \left[ \int_0^{\tau_D} g(X_t) \, dt \right] \leq \mathbb{E}_{w'} \left[ \int_0^{\tau_D} g(X_t) \, dt \right].
\end{equation*}
\end{lemma}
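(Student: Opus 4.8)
The plan is to use a coupling argument together with Lemma \ref{PreferredSideZ}. Fix $w \in D \cap W$ and set $v := w' \in V$. The process $\varphi(X)$ started from $v$ has the same law as the process $\varphi(X)$ started from $w$, since $\varphi(v) = \varphi(w) = v$ (both $v$ and $w=v'$ map to the representative in $V$) and $\varphi(X)$ is a measurable functional of $X$ whose law depends only on $\varphi(X_0)$ — this is essentially the content of the construction sketched after Lemma \ref{PreferredSideZ}, where $Z = \varphi(X)$ is built first. So I would work on a single probability space carrying $\varphi(X)$ started from $v$, and view both $\mathbb{P}_v$ and $\mathbb{P}_w$ as obtained by re-randomizing the ``which side'' choices on top of the same $Z$-path. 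Concretely, let $Z = \varphi(X)$ and let $(\mathcal G_t)$ be its filtration; by Lemma \ref{PreferredSideZ} applied to the initial point $v \in V$, for every $t \ge 0$ we have $\mathbb{P}_v(X_t \in V \mid \mathcal G_t) \ge \tfrac12$ a.s.

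Next I would rewrite both expectations as integrals against the law of $Z$. The key observation is that the hypotheses \eqref{DUlargerThanDV} and \eqref{GLargerOnU} let me control the integrand purely in terms of $\varphi(X_t)$. Define $\widetilde D := \{ z \in V : z \in D \text{ or } z' \in D\}$ and, for $z \in \widetilde D \cap V$, a function $\widetilde g(z)$ that compares $g$ at $z$ versus $z'$. By \eqref{DUlargerThanDV}, if $\varphi(X_t) \in D$ then $X_t \in D$ automatically (if $X_t \in V$ this is because $\varphi(X_t) = X_t$; if $X_t \in W$ then $\varphi(X_t) = X_t' \in D$ forces $X_t \in D$ by the reverse implication — here I need to be careful and will instead argue that $\{\varphi(X_t) \in D\} \subseteq \{X_t \in D\}$ using that $w \in D \Rightarrow w' \in D$ means $D$ is ``symmetric downward''). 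This gives the comparison of exit times $\tau_D^{\varphi(X)} \le \tau_D$ pathwise in the relevant sense, and more importantly, for $t < \tau_D^{\varphi(X)}$,
\begin{equation*}
g(X_t) \ge g(\varphi(X_t)) \cdot \indicator_{\{X_t \in V\}} \quad\text{is false in general; instead } g(X_t) \le g(\varphi(X_t)) \text{ when } X_t \in W.
\end{equation*}
So the honest inequality is: $\mathbb{E}_w$ replaces, at each time $t$, the value $g(\varphi(X_t))$ by something $\le g(\varphi(X_t))$ with probability $\ge \tfrac12$ (when $X_t$ lands in $W$) and keeps $g(\varphi(X_t))$ otherwise; while $\mathbb{E}_v$ keeps $g(\varphi(X_t))$ with probability $\ge \tfrac12$ and replaces it by $g(X_t') = g(\varphi(X_t)')$ — wait, this needs the reflection to land back in $D$. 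Let me restructure: the clean route is to start from $v \in V$, note $\mathbb{E}_v[\int_0^{\tau_D} g(X_t)\,dt]$ and $\mathbb{E}_w[\int_0^{\tau_D} g(X_t)\,dt]$, condition on $\mathcal G_\infty$ (the whole $Z$-path), and on the event $\{\varphi(X_t) \in D\}$ write the conditional expectation of $g(X_t)$ given $\mathcal G_\infty$ as $p_t\, g(z_t^V) + (1-p_t)\, g(z_t^W)$ where $z_t^V = \varphi(X_t)$, $z_t^W = \varphi(X_t)'$, and $p_t = \mathbb{P}(X_t \in V \mid \mathcal G_t) \ge \tfrac12$ — but here I must check $z_t^W \in D$, which holds by \eqref{DUlargerThanDV}. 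Then \eqref{GLargerOnU} gives $g(z_t^W) \le g(z_t^V)$, so the conditional expectation is a weighted average that is increasing in $p_t$; since the weight is $\ge \tfrac12$ in both cases, and actually I want to compare starting from $v$ vs starting from $w$.

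Here is the decisive point, and the step I expect to be the main obstacle: I need that starting from $v \in V$ genuinely gives a \emph{larger} weight $p_t$ on the favorable value $g(z_t^V)$ than starting from $w \in W$ does, not merely $p_t \ge \tfrac12$. Symmetry of the construction gives $\mathbb{P}_v(X_t \in V \mid \mathcal G_t) + \mathbb{P}_w(X_t \in V \mid \mathcal G_t) = 1$ almost surely on a common $Z$-path (the ``which side'' coin is flipped oppositely when the start is reflected), so $\mathbb{P}_v(X_t \in V \mid \mathcal G_t) = 1 - \mathbb{P}_w(X_t \in V \mid \mathcal G_t) \ge \mathbb{P}_w(X_t \in V \mid \mathcal G_t)$ exactly because the former is $\ge \tfrac12$. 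Combining: conditionally on $\mathcal G_\infty$, for each $t < \tau_D^{\varphi(X)}$,
\begin{equation*}
\mathbb{E}_v\big[ g(X_t) \,\big|\, \mathcal G_\infty \big] = p_t\, g(z_t^V) + (1-p_t)\, g(z_t^W) \ge q_t\, g(z_t^V) + (1-q_t)\, g(z_t^W) = \mathbb{E}_w\big[ g(X_t) \,\big|\, \mathcal G_\infty \big],
\end{equation*}
where $p_t \ge q_t$ and $g(z_t^V) \ge g(z_t^W) \ge 0$. Integrating over $t \in [0, \tau_D^{\varphi(X)})$ — and observing that on this time interval $X_t \in D$ under both measures, while outside $\widetilde D$ the integrand contributes $0$ to both sides because $X_t \notin D$ — then taking $\mathbb{E}[\,\cdot\,]$ over $\mathcal G_\infty$ and using Tonelli yields $\mathbb{E}_v[\int_0^{\tau_D} g(X_t)\,dt] \ge \mathbb{E}_w[\int_0^{\tau_D} g(X_t)\,dt]$, which is the claim since $v = w'$. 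The routine but non-trivial verifications I am deferring are: (i) the precise sense in which $\varphi(X)$ under $\mathbb{P}_v$ and under $\mathbb{P}_w$ can be realized on the same $Z$-path with complementary side-choices — this is exactly what the $(Y,Z)$ construction underlying Lemma \ref{PreferredSideZ} provides; (ii) that $\{t : \varphi(X_t) \in D\}$ coincides with $\{t : X_t \in D\}$ up to the issue handled by \eqref{DUlargerThanDV}, so that replacing $\tau_D$ by $\tau_D^{\varphi(X)}$ in the time-integral changes nothing; and (iii) measurability/integrability to justify Tonelli, which is automatic since $g \ge 0$.
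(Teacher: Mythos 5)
There is a genuine gap, and it is exactly at your deferred verification (ii). The hypothesis \eqref{DUlargerThanDV} does \emph{not} make $D$ symmetric about $H$: it only says that the part of $D$ in $W$ reflects into $D$, so $D$ may contain a nonempty asymmetric piece $\tilde D:=\{x\in D: x'\notin D\}\subseteq D\cap V$. Consequently your claimed inclusion $\{\varphi(X_t)\in D\}\subseteq\{X_t\in D\}$ is backwards — \eqref{DUlargerThanDV} gives $\{X_t\in D\}\subseteq\{\varphi(X_t)\in D\cap V\}$, hence $\tau_D\le\tau^{\varphi(X)}_{D\cap V}$, with strict inequality possible (e.g.\ $X$ jumps into $(\tilde D)'$, which lies outside $D$ while $\varphi(X)$ is still in $D\cap V$). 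So $\tau_D$ is not determined by the $Z$-path, the two coupled processes started at $w$ and $w'$ need not exit $D$ at the same time, and "replacing $\tau_D$ by $\tau^{\varphi(X)}_D$ changes nothing" is false. Your conditional-mixture computation (conditioning on the $Z$-path, writing $\mathbb{E}[g(X_t)\mid\cdot]$ as $p_t\,g(z_t^V)+(1-p_t)\,g(z_t^W)$ with $p_t\ge\tfrac12$ and using complementarity of the two starting points) is sound and is essentially how the paper treats the contribution up to $T:=\min\{\tau^{Y}_D,\tau^{Y'}_D\}$ (its term $\mathscr{A}$), but it only covers $[0,T)$; the asymmetric case cannot be wished away, since in the application (Lemma \ref{HalBound}) $D=\mathcal{A}$ is \emph{not} symmetric about the hyperplane $H_1$.

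What is missing is the residual term for the random interval between the two exit times: after $T$, exactly one of the coupled processes may still be inside $D$ (necessarily at a point of $\tilde D\subseteq V$), and it keeps accumulating $\int g$. The paper handles this term ($\mathscr{B}$) by applying the strong Markov property at $T$, so the residual equals $G(Z_T)\ge0$ weighted by which process survived, and then invoking the \emph{stopping-time} half of Lemma \ref{PreferredSideZ} (note $T$ is the exit time of $Z$ from the symmetric core $D_0\cap V$, a relatively open pre-compact subset of $V$) to get $\mathbb{P}(Y'_T\in V\mid\mathcal{G}_T)\ge\tfrac12$, i.e.\ the surviving process is more likely the one started from $w'$. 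Your proposal never uses that stopping-time statement, which is the tell-tale sign that the asymmetric tail has been dropped; without an argument of this kind the inequality is not established. (A smaller, fixable point: Lemma \ref{PreferredSideZ} bounds $\mathbb{P}(X_t\in V\mid\mathcal{G}_t)$, not the conditional probability given the whole path $\mathcal{G}_\infty$; one should, as the paper does, apply Tonelli first and condition on $\mathcal{G}_t$ at each fixed $t$, or argue separately that the side-choices are independent of the future of $Z$.)
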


Note that Lemma \ref{PreferredSideW} will be directly used outside this section. Lemma \ref{PreferredSideZ} is only used to prove Lemma \ref{PreferredSideW}.

We start with the following helpful fact.

\begin{lemma} \label{BernoulliSumEven}
Let $(a_i)_{i \in I}$ be a sequence, indexed by some finite or countable sequence $I$, such that $0 \leq a_i \leq 1$ for all $i$, and $\sum_{i \in I} a_i < \infty$.
Let $(\eta_i)_{i \in I}$ be a collection of independent random variables, also indexed by $I$, such that each $\eta_i$ is Bernoulli($a_i$).
Then
    \begin{equation} \label{BernoulliSumEvenConclusion}
        \mathbb{P}\left( \mbox{$\sum_{i \in I} \eta_i$ is even} \right) = \frac12 \left( 1 + \prod_{i \in I} (1-2a_i) \right).
    \end{equation}
\end{lemma}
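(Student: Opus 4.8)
The plan is to use the standard ``sign generating function'' trick: encode the parity of $\sum_{i\in I}\eta_i$ through $(-1)^{\sum_i\eta_i}$, take expectations, and exploit independence to turn the expectation of a product into a product of expectations. Concretely, the identity $\indicatorWithSetBrackets{n \text{ is even}} = \tfrac12\big(1+(-1)^n\big)$ reduces the problem to computing $\mathbb{E}\big[(-1)^{\sum_{i\in I}\eta_i}\big]$, and independence should give $\mathbb{E}\big[(-1)^{\sum_i\eta_i}\big] = \prod_{i\in I}\mathbb{E}\big[(-1)^{\eta_i}\big] = \prod_{i\in I}(1-2a_i)$, since $\mathbb{E}\big[(-1)^{\eta_i}\big] = (1-a_i) - a_i = 1-2a_i$.

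Before running this, I would first settle well-definedness, which is where the hypothesis $\sum_{i\in I}a_i<\infty$ enters. By the Borel--Cantelli lemma, almost surely $\eta_i=1$ for only finitely many $i$; hence $\sum_{i\in I}\eta_i$ is almost surely a finite sum, its parity is meaningful, and $(-1)^{\sum_{i\in I}\eta_i} = \prod_{i\in I}(-1)^{\eta_i}$ is a product in which all but finitely many factors equal $1$. On the deterministic side, $\sum_i a_i<\infty$ forces $a_i\to 0$, so $1-2a_i\in(0,1]$ for all but finitely many $i$, and the remaining tail product converges absolutely; therefore $\prod_{i\in I}(1-2a_i)$ is a well-defined real number (possibly $0$, possibly negative, but of absolute value at most $1$, which is the expected sanity check for the right-hand side of \eqref{BernoulliSumEvenConclusion}).

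For the main computation I would pick an increasing sequence of finite sets $I_1\subseteq I_2\subseteq\cdots$ with $\bigcup_n I_n=I$. For each finite $I_n$, independence gives $\mathbb{E}\big[\prod_{i\in I_n}(-1)^{\eta_i}\big] = \prod_{i\in I_n}(1-2a_i)$. The partial products $\prod_{i\in I_n}(-1)^{\eta_i}$ are bounded in absolute value by $1$ and converge pointwise almost surely to $\prod_{i\in I}(-1)^{\eta_i} = (-1)^{\sum_{i\in I}\eta_i}$ as $n\to\infty$ (using the finiteness from Borel--Cantelli), so dominated convergence yields $\mathbb{E}\big[(-1)^{\sum_{i\in I}\eta_i}\big] = \lim_n \prod_{i\in I_n}(1-2a_i) = \prod_{i\in I}(1-2a_i)$. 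Taking expectations in $\indicatorWithSetBrackets{\sum_{i}\eta_i \text{ is even}} = \tfrac12\big(1+(-1)^{\sum_i\eta_i}\big)$ then gives \eqref{BernoulliSumEvenConclusion}.

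The computation itself is entirely routine; the only step needing genuine care is the measure-theoretic bookkeeping — that the infinite sum $\sum_i\eta_i$ and the infinite product $\prod_i(1-2a_i)$ make sense and that expectation commutes with the limit over finite subsets — all of which is handled by $\sum_i a_i<\infty$ together with Borel--Cantelli and bounded convergence. (If one wished to avoid Borel--Cantelli, the partial products $\prod_{i\in I_n}(-1)^{\eta_i}$ form a bounded martingale in $n$, and one could invoke the martingale convergence theorem instead; the dominated-convergence route is cleaner.)
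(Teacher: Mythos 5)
Your proof is correct and takes essentially the same approach as the paper: use $\indicatorWithSetBrackets{n \text{ even}} = \tfrac12(1+(-1)^n)$, compute $\mathbb{E}[(-1)^{\sum_i \eta_i}]$ by independence, and solve. The paper states the independence step as a one-liner; you fill in the measure-theoretic details (Borel--Cantelli for a.s.\ finiteness, exhaustion by finite subsets, dominated convergence) that the paper leaves implicit, which is a reasonable elaboration rather than a different route.
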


\begin{proof} Since the $a_i$'s have a finite sum, by Borel-Cantelli, $\sum_{i \in I} \eta_i$ is almost surely finite. By independence,
\begin{equation*}
    \mathbb{E}\left[ (-1)^{\sum_{i \in I} \eta_i} \right] = \prod_{i \in I} \mathbb{E}\left[ (-1)^{\eta_i} \right].
\end{equation*}
Simplifying,
\begin{equation*}
    2 \mathbb{P}\left( \mbox{$\sum_{i \in I} \eta_i$ is even} \right) - 1 = \prod_{i \in I} (1-2a_i).
\end{equation*}
Solving for $\mathbb{P}\left( \mbox{$\sum_{i \in I} \eta_i$ is even} \right)$, we obtain \eqref{BernoulliSumEvenConclusion}.
\end{proof}

Recall that our plan for the proof of Lemma \ref{PreferredSideZ} is to construct a joint process $(Y_t, Z_t)_{t \geq 0}$, with the same law as $(X_t, \varphi(X_t))_{t \geq 0}$, where $Z$ is constructed first, and then $Y$ is determined from $Z$ and another random structure independent of $Z$. The following definition and remark help us with the construction of $(Y_t, Z_t)_{t \geq 0}$.

\begin{definition} \label{FlipDef}
Let us say that a \textit{flip} of $X$ occurs at time $t$ if $X_{t-} \in V$ and $X_t \in W$, or vice versa.
\end{definition}

\begin{remark} \label{FlipRemark}
(a) The jump kernel of the process $\varphi(X) = \left(\varphi(X_t)\right)_{t \geq 0}$ is
\begin{equation*}
    J_{\varphi(X)}(x, y) = J(x, y) + J(x, y') \qquad\mbox{for all $x, y \in V$, $x \neq y$}.
\end{equation*}
To see this, note that jumps in $\varphi(X)$ from $x$ to $y$ occur precisely whenever $X$ takes a jump from either $x$ or $x'$ to either $y$ or $y'$. Jumps from $x$ to $y$ (or $x'$ to $y'$) occur with rate $J(x, y)$, and jumps from $x$ to $y'$ (or $x'$ to $y$) occur with rate $J(x, y')$.

(b) Suppose $\varphi(X)$ takes a jump from $x$ to $y$ at time $t$. Then $X$ must take a jump from either $x$ or $x'$ to either $y$ or $y'$. The probability that this jump is a flip in $X$ is equal to $J(x, y') / \left( J(x, y) + J(x, y') \right)$.
This quantity will come up in the proof of Lemma \ref{PreferredSideZ}, so let us simplify notation by setting
\begin{equation} \label{pFlipDef}
    \pFlip(x, y) := \frac{J(x, y')}{J(x, y)+J(x, y')} \qquad\mbox{for all $x, y \in V$, $x \neq y$}.
\end{equation}
\end{remark}
Note that for any $x, y \in V$, the distance between $x$ and $y'$ is greater than or equal to the distance between $x$ and $y'$. Since $j$ is decreasing, this means that
\begin{equation*}
    J(x, y') = j(|x-y'|) \leq j(|x-y|) = J(x, y),
\end{equation*}
so
\begin{equation} \label{pFlipLeqHalf}
    \pFlip(x, y) \leq \frac12 \qquad\mbox{for all $x, y \in V$, $x \neq y$}.
\end{equation}

\begin{proof}[Proof of Lemma \ref{PreferredSideZ}]
Let $Z=(Z_t)_{t \geq 0}$ be a stochastic process on $V$ with the same law as $\varphi(X)=(\varphi(X_t))_{t\geq0}$.
Let $(\mathcal{H}_t)_{t \geq 0}$ be the minimal complete, right-continuous filtration such that $Z$ is adapted to $(\mathcal{H}_t)$.

Let $(U_t)_{t \in (0, \infty)}$ be an independent, identically-distributed collection of random variables, uniform on $(0, 1)$, indexed by $[0, \infty)$, such that the whole collection $(U_t)$ is independent from $Z$.

In order to determine $Y$ from $Z$, we must determine which jumps in $Z$ are flips in $Y$. Whenever $Z$ takes a jump from $x$ to $y$, the probability of a flip should be equal to $\pFlip(x, y)$ (as defined in \eqref{pFlipDef}).
Therefore, let
\begin{equation*}
    F = \left\{ t>0 : Z_{t-}\neq Z_t, U_t \leq \pFlip(Z_{t-}, Z_t) \right\}.
\end{equation*}
The random set $F$ will be the set of times at which a flip in $Y$ occurs.

There may also be times $t$ such that $Z_t \in H$. In such cases, we must also determine which side of $H$ the process $Y$ goes into after time $t$. In every such a case, we should choose either $V$ or $W$, each with probability $1/2$. This is achieved in the following construction.

Fix $t \geq 0$. Suppose first that there does not exist any $s \in [0, t]$ such that $Z_s \in H$. In this case, let
\begin{equation*}
    Y_t := \left\{ \begin{matrix}
        Z_t &:& \mbox{if $\#(F \cap (0, t])$ is even}\\
        \\
        Z_t' &:& \mbox{if $\#(F \cap (0, t])$ is odd}.
    \end{matrix}\right.
\end{equation*}
Otherwise, let $s$ be the maximal time less than or equal to $t$ such that $Z_s \in H$ or $Z_{s-} \in H$, and let
\begin{equation*}
    Y_t := \left\{ \begin{matrix}
        Z_t &:& \mbox{if $U_s \leq \frac12$ and $\#(F \cap (s, t])$ is even, or if $U_s > \frac12$ and $\#(F \cap (s, t])$ is odd}\\
        \\
        Z_t' &:& \mbox{if $U_s \leq \frac12$ and $\#(F \cap (s, t])$ is odd, or if $U_s > \frac12$ and $\#(F \cap (s, t])$ is even}.
    \end{matrix}\right.
\end{equation*}
Then $(Y_t, Z_t)_{t \geq 0}$ has the same law as $(X_t, \varphi(X_t))_{t \geq 0}$.

Recall that we defined $(\mathcal{G}_t)_{t \geq 0}$ to be the minimal complete, right-continuous filtration such that $\varphi(X)$ is adapted to $(\mathcal{G}_t)$. By analogy, let $(\mathcal{H}_t)_{t\geq0}$ be the minimal complete, right-continuous filtration such that $Z$ is adapted to $(\mathcal{H}_t)$.

Fix $t \geq 0$.
Let us condition on the history of $Z$ from time $0$ to time $t$. If for any time $s$ in this period, we have $Z_s \in H$ or $Z_{s-} \in H$, then the conditional probability that $Y_{t} \in V$ is at least $1/2$, by symmetry. (We say ``at least $1/2$" rather than ``exactly $1/2$" because the boundary $H$ belongs to $V$). If there is no such $s$, then $Y_{t} \in V$ as long as the number of flips before time $t$ is even. The potential flips occur independently, each with its own probability that does not exceed $1/2$, so by Lemma \ref{BernoulliSumEven}, the conditional probability that the number of flips is even is at least $1/2$. Therefore, for all $v \in V$,
\begin{equation} \label{DeterministicTime31}
    \mathbb{P}_v \left(Y_t \Big| \mathcal{H}_{t} \right) \geq \frac12. 
\end{equation}
Since $(Y_t, Z_t)_{t \geq 0}$ has the same law as $(X_t, \varphi(X_t))_{t \geq 0}$,
\begin{equation*}
    \mathbb{P}_v \left(X_t \Big| \mathcal{G}_t \right)=\mathbb{P}_v \left(Y_t \Big| \mathcal{H}_t \right) \geq \frac12.
\end{equation*}

Let $E$ be a pre-compact subset of $V$, which is open in the relative topology on $V$.
Let $\tau^{\varphi(X)}_E:=\inf\{ t: \varphi(X_t) \notin E\}$ and $\tau^Z_E := \inf\{t:Z_t \notin E\}$.
Since $E$ is open, $\tau^Z_E$ is a $(\mathcal{H}_t)$-stopping time. For all $v \in V$, by the same argument that we used to obtain \eqref{DeterministicTime31}, applied to $\tau^Z_E$ instead of $t$, we obtain
\begin{equation*}
    \mathbb{P}_v \left(Y_{\tau^Z_E} \Big| \mathcal{H}_{\tau^Z_E} \right) \geq \frac12. 
\end{equation*}
Since $(Y_t, Z_t)_{t \geq 0}$ has the same law as $(X_t, \varphi(X_t))_{t \geq 0}$,
\begin{equation*}
    \mathbb{P}_v \left(X_{\tau^{\varphi(X)}_E} \Big| \mathcal{G}_{\tau^{\varphi(X)}_E} \right)=\mathbb{P}_v \left(Y_{\tau^Z_E} \Big| \mathcal{H}_{\tau^Z_E} \right) \geq \frac12.
\end{equation*}
\end{proof}

\begin{proof}[Proof of Lemma \ref{PreferredSideW}]

Let $D$ and $g$ satisfy \eqref{DUlargerThanDV} and \eqref{GLargerOnU}.
Let $G : D \to [0, \infty)$ be the function
\begin{equation*}
    G(x) := \mathbb{E}_x\left[ \int_0^{\tau_D} g(X_t) \, dt \right] \qquad\mbox{for all $x \in D$}.
\end{equation*}

Fix $w \in D \cap W$. We would like to show that $G(w) \leq G(w')$. Let $Y=(Y_t)_{t \geq 0}$ be a process with the same law as $X$, starting at $Y_0=w$. We will also consider the process $Y'=(Y'_t)_{t \geq 0}$, where $Y'_t$ is the reflection of $Y_t$ across $H$. This reflected process also has the same law as $X$, but its initial value is $Y'_0=w' \in V$.

Let $Z_t := \varphi(Y_t) = \varphi(Y'_t)$ for all $t$.
Let $(\mathcal{G}_t)_{t \geq 0}$ be the minimal complete, right-continuous filtration such that $Z=(Z_t)_{t \geq 0}$ is adapted to $(\mathcal{G}_t)$.

Note that for all $t$, either $Y_t \in V$ or $Y'_t \in V$.

Let
\begin{align*}
    \tau^Y_D &:= \inf\{ t\geq0 : Y_t \notin D\}, \\
    \tau^{Y'}_D &:= \inf\{ t\geq0 : Y'_t \notin D\}, \\
    T &:= \min\{\tau^Y_D, \tau^{Y'}_D\}, \\
    \tilde{T} &:= \max\{\tau^Y_D, \tau^{Y'}_D\}.
\end{align*}

Let
\begin{equation*}
    D_0 := (D \cap W) \cup \{w' : w \in D \cap W\}.
\end{equation*}
In other words, $D_0$ is the largest subset of $D$ that is symmetric about $H$. Let
\begin{equation*}
    \tilde{D} := D \setminus D_0 = \{x \in D : x' \notin D \}.
\end{equation*}
Note that $\tilde{D} \subseteq D \cap V$.

Observe that $T = \inf\{ t : \mbox{$Y_t \notin D$ or $Y'_t \notin D$} \} = \inf\{ t : Z_t \notin D_0 \cap V\}$. Since $D_0 \cap V$ is a pre-compact subset of $V$, and is open in the relative topology on $V$, by Lemma \ref{PreferredSideZ}, we have
\begin{equation} \label{ApplyingFirstPSLemma}
    \mathbb{P}\left( Y_T \in V \left|  \mathcal{G}_T \right.\right) \geq \frac12.
\end{equation}

In order to show that $G(w) \leq G(w')$, we will consider the quantity $G(w') - G(w)$, and show that it is non-negative. Since $Y$ and $Y'$ have the same law as $X$ and initial values $w$ and $w'$ (respectively),
\begin{align*}
    G(w') - G(w) &= \mathbb{E}_{w'}\left[ \int_0^{\tau_D} g(X_t) \, dt \right] - \mathbb{E}_w \left[ \int_0^{\tau_D} g(X_t) \, dt \right]\\
    &= \mathbb{E}\left[ \int_0^{\tau^{Y'}_D} g(Y'_t) \, dt \right] - \mathbb{E}_w \left[ \int_0^{\tau^Y_D} g(Y_t) \, dt \right].
\end{align*}
This is equivalent to
\begin{align*}
    G(w') - G(w) &= \mathbb{E}\left[\int_0^T \left[ g(Y'_t) - g(Y_t) \right] \, dt + \indicatorWithSetBrackets{\tau^{Y'}_D > \tau^Y_D} \int_T^{\tilde{T}} g(Y'_t) \, dt + \indicatorWithSetBrackets{\tau^{Y}_D > \tau^{Y'}_D} \int_T^{\tilde{T}} g(Y_t) \, dt \right].
\end{align*}
Let us separate the right-hand side into two terms, which we will handle separately:
\begin{equation*}
    G(w') - G(w) = \mathscr{A} + \mathscr{B},
\end{equation*}
where
\begin{equation*} 
    \mathscr{A}:= \mathbb{E}\left[\int_0^T \left[ g(Y'_t) - g(Y_t) \right] \, dt \right]
\end{equation*}
and
\begin{equation*} 
    \mathscr{B}:= \mathbb{E}\left[\indicatorWithSetBrackets{\tau^{Y'}_D > \tau^Y_D} \int_T^{\tilde{T}} g(Y'_t) \, dt + \indicatorWithSetBrackets{\tau^{Y}_D > \tau^{Y'}_D} \int_T^{\tilde{T}} g(Y_t) \, dt \right].
\end{equation*}

We will show that both $\mathscr{A}$ and $\mathscr{B}$ are non-negative. Let us start with $\mathscr{A}$. By Tonelli's theorem,
\begin{equation*}
    \mathscr{A} = \int_0^\infty \mathbb{E} \left[ \indicatorWithSetBrackets{T>t} \left[ g(Y'_t) - g(Y_t) \right] \right] \, dt.
\end{equation*}
By the Tower property, this becomes
\begin{align}
    \mathscr{A} &= \int_0^\infty \mathbb{E} \Bigg[ \mathbb{E} \left[ \indicatorWithSetBrackets{T>t} \left[ g(Y'_t) - g(Y_t) \right] \Big| \mathcal{G}_t \right] \Bigg] \, dt \nonumber\\
    &= \int_0^\infty \mathbb{E} \Bigg[ \mathbb{E} \left[ \indicatorWithSetBrackets{T>t} \left[\indicatorWithSetBrackets{Y'_t \in V} - \indicatorWithSetBrackets{Y'_t \notin V} \right]  \left[ g(Z_t) - g(Z'_t) \right] \Big| \mathcal{G}_t \right] \Bigg] \, dt. \label{ATower}
\end{align}
By taking all of the $\mathcal{G}_t$-measurable random variables outside of the conditional expectation, \eqref{ATower} becomes
\begin{align}
    \mathscr{A} &= \int_0^\infty \mathbb{E} \Bigg[ \indicatorWithSetBrackets{T>t} \left[ g(Z_t) - g(Z'_t) \right] \mathbb{E} \left[\indicatorWithSetBrackets{Y'_t \in V} - \indicatorWithSetBrackets{Y'_t \notin V} \bigg| \mathcal{G}_t \right] \Bigg] \, dt \nonumber\\
    &= \int_0^\infty \mathbb{E} \Bigg[ \indicatorWithSetBrackets{T>t} \left[ g(Z_t) - g(Z'_t) \right] \left(2 \mathbb{P} \left[ Y'_t \in V \Big| \mathcal{G}_t \right] - 1 \right) \, \Bigg] dt. \label{AWithXis}
\end{align}
Recall that $g$ has the property that $g(v) \leq g(v')$ for all $v \in D \cap V$. Therefore, $\indicatorWithSetBrackets{T>t} \left[ g(Z_t) - g(Z'_t) \right]$ is non-negative. Also, by Lemma \ref{PreferredSideZ},
\begin{equation*}
    2 \mathbb{P} \left[ Y'_t \in V \Big| \mathcal{G}_t \right] - 1 \geq 0.
\end{equation*}
Therefore, the integrand of \eqref{AWithXis} is non-negative. Thus, $\mathscr{A} \geq 0$.

Now let us show that $\mathscr{B} \geq 0$. Note that if $\tilde{T}>T$, then $Z_T$ must be in $\tilde{D}$. Recall that $Z_T$ is equal to $Y'_T$ if $\tau^{Y'}_D > \tau^Y_D$, or $Y_T$ if $\tau^{Y}_D > \tau^{Y'}_D$. Therefore, by the Markov property,
\begin{align*}
    \mathscr{B} &= \mathbb{E} \left[ \indicatorWithSetBrackets{\tau^{Y'}_D > \tau^Y_D} G(Y'_T) + \indicatorWithSetBrackets{\tau^{Y}_D > \tau^{Y'}_D} G(Y_T) \right]\\
    &= \mathbb{E} \left[ \indicatorWithSetBrackets{Y'_T \in V} G(Z_T) + \indicatorWithSetBrackets{Y'_T \notin V} G(Z_T) \right] \\
    &= \mathbb{E} \left[ \left( \indicatorWithSetBrackets{Y'_T \in V} - \indicatorWithSetBrackets{Y'_T \in V} \right) G(Z_T) \right].
\end{align*}
By the Tower property, this becomes,
\begin{align*}
    \mathscr{B} &= \mathbb{E} \left[ G(Z_T) \, \mathbb{E}\left[ \indicatorWithSetBrackets{Y'_T \in V} - \indicatorWithSetBrackets{Y'_T \in V} \Big| \mathcal{G}_t \right] \right]\\
    &= \mathbb{E} \left[ G(Z_T) \, \left(2\mathbb{P}\left[Y'_T \in V \Big| \mathcal{G}_t \right] - 1\right) \right].
\end{align*}
By \eqref{ApplyingFirstPSLemma}, $2\mathbb{P}\left[Y'_T \in V \Big| \mathcal{G}_t \right] - 1 \geq 0$. Since $G$ only takes non-negative values, $G(Z_T) \geq 0$. Thus, $\mathscr{B} \geq 0$.

Since $G(w')-G(w)=\mathscr{A}+\mathscr{B}$, and both $\mathscr{A}$ and $\mathscr{B}$ are non-negative, we have $G(w) \leq G(w')$, just as we wanted to prove.
\end{proof}

\FloatBarrier
\section{Proof of Proposition \ref{DiagramProp}} \label{DiagramSection}

In this section, we prove Proposition \ref{DiagramProp}. Fix constants $c$, $\alpha$, and $R$ such that $0 < \sqrt{29}\alpha \leq c < 1$ and $R>0$. For the rest of this section, let $\mathcal{A}$, $\mathcal{U}$, $f$, and $x_0$ all be precisely as they are defined in the statement of Proposition \ref{DiagramProp}. 

Recall the notation $B^{(d-1)}(x_0, r)$ from \eqref{Bd-1}.
Within $\mathcal{A}$, consider the smaller cylinders
\begin{equation*}
    A_+ := \Big((1-2\alpha)R, R \Big) \times B^{(d-1)}(0, \alpha R) \qquad\mbox{and}\qquad A_- := \Big(-R, -(1-2\alpha)R \Big) \times B^{(d-1)}(0, \alpha R).
\end{equation*}
Also, let $U$ be the smaller cylinder within $\mathcal{U}$ defined by
\begin{equation*}
    U := \Big( (1+\alpha) R, (3+\alpha) R \Big) \times B^{(d-1)}(0, \alpha R).
\end{equation*}
Finally, let
\begin{equation*}
    \mathcal{A}_\ell := \left\{ x \in \mathcal{A} : \mbox{$\inner{x, e_1} \leq 0$ and $x \notin A_-$} \right\}.
\end{equation*}
and
\begin{equation*}
    \mathcal{A}_r := \left\{ x \in \mathcal{A} : \inner{x, e_1}>0 \right\}.
\end{equation*}
See Figure \ref{Diagram} for a sketch of all these regions.

\begin{figure} 
\begin{framed}
\captionof{figure}{Sketch of the setting of Proposition \ref{DiagramProp}, including some additional named regions that will help with the proof.}

\label{Diagram}
\centering
\begin{tikzpicture}
    \draw[color=blue] (-3, -3) -- (3, -3) -- (3, 3) -- (-3, 3) -- (-3, -3);
    
    \draw[color=blue] (3.5, -3) -- (9.5, -3) -- (9.5, 3) -- (3.5, 3) -- (3.5, -3);
    
    \draw[color=red] (-2.94, -.5) -- (-2, -.5) -- (-2, .5) -- (-2.94, .5) -- (-2.94, -.5);
    
    \draw[color=red] (2.94, -.5) -- (2, -.5) -- (2, .5) -- (2.94, .5) -- (2.94, -.5);
    
    \draw[color=red] (3.53, -.5) -- (4.5, -.5) -- (4.5, .5) -- (3.53, .5) -- (3.53, -.5);
    
    \draw[color=green] (-.03, 2.94) -- (-.03, -2.94) -- (-2.94, -2.94) -- (-2.94, -.56) -- (-1.94, -.56) -- (-1.94, .56) --(-2.94, .56) -- (-2.94, 2.94) -- (-.03, 2.94);
    
    \draw[color=purple] (.03, 2.94) -- (2.97, 2.94) -- (2.97, -2.94) -- (.03, -2.94) -- (.03, 2.94);
    
    \node[shape=circle, fill=black, scale=0.1, label=$0$] at (0, 0) {$0$};
    
    \node[shape=circle, fill=black, scale=0.1, label={$-x_0$}] at (-2.5, 0) {$-x_0$};
    
    \node[shape=circle, fill=black, scale=0.1, label={$x_0$}] at (2.5, 0) {$x_0$};
    
    \node[color=red] at (-2.5, -0.4) {$A_-$};
    
    \node[color=red] at (2.5, -0.4) {$A_+$};
    
    \node[color=red] at (4, -0.4) {$U$};
    
    \node[color=green] at (-1.5, 2.7) {$\mathcal{A}_\ell$};
    
    \node[color=purple] at (1.5, 2.7) {$\mathcal{A}_r$};
    
    \node[color=blue] at (0, -3.3) {$\mathcal{A}$};
    
    \node[color=blue] at (6.5, -3.3) {$\mathcal{U}$};
\end{tikzpicture}
\end{framed}
\end{figure}
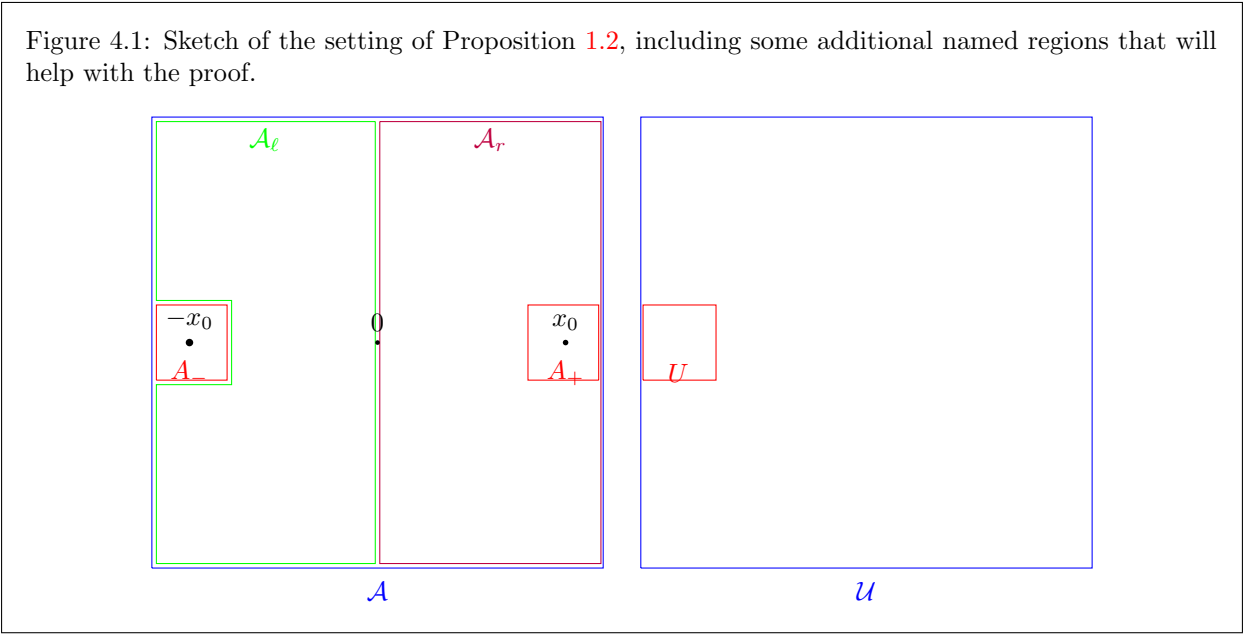

We must prove that \eqref{DiagramPropResult} holds. This entails putting an upper bound on the quantity $f(-x_0) = \mathbb{P}_{-x_0}(X_{\tau_{\mathcal{A}}} \in \mathcal{U})$.

Given a measurable set $E \subseteq \mathcal{A} \setminus A_-$, it will help to consider the quantities
\begin{align*}
    G(E) &:= \mathbb{P}_{-x_0} \left( X_{\tau_{A_-}} \in E \right),\\
    H(E) &:= \max_{y \in E} f(y) = \max_{y \in E} \mathbb{P}_y \left( X_{\tau_{\mathcal{A}}} \in \mathcal{U} \right).
\end{align*}
The quantities $G(E)$ and $H(E)$ will be useful to us because they give an upper bound on the probability, given $X_0=-x_0$, that the first point $X$ reaches outside of $A_-$ is in $E$, and the first point $X$ reaches outside of $\mathcal{A}$ is in $\mathcal{U}$.
Indeed, for all measurable $E \subseteq \mathcal{A} \setminus A_-$, by the Strong Markov property,
\begin{align}
    \mathbb{P}_{-x_0} \left( X_{\tau_{A_-}} \in E, X_{\tau_{\mathcal{A}}} \in \mathcal{U} \right) &= \mathbb{E}_{-x_0} \left[ \indicatorWithSetBrackets{X_{\tau_{A_-}} \in E} \mathbb{P}_{X_{\tau_{A_-}}} \left( X_{\tau_{\mathcal{A}}} \in \mathcal{U} \right) \right] \nonumber\\
    &= \mathbb{E}_{-x_0} \left[ \indicatorWithSetBrackets{X_{\tau_{A_-}} \in E} f \left( X_{\tau_{A_-}} \right) \right] \nonumber\\
    &\leq \mathbb{E}_{-x_0} \left[ \indicatorWithSetBrackets{X_{\tau_{A_-}} \in E} \right] \cdot \max_{y \in E} f(y) \nonumber\\
    &= \mathbb{P}_{-x_0} \left( X_{\tau_{A_-}} \in E \right) \cdot H(E)\nonumber\\
    &= G(E) H(E). \label{GEHEbound}
\end{align}

Note that $\{A_-, \mathcal{A}_\ell, \mathcal{A}_r\}$ is a partition of $\mathcal{A}$. Suppose $X_0=-x_0$, and consider the following three disjoint ways for $X_{\tau_{\mathcal{A}}}$ to be in $\mathcal{U}$:
\begin{itemize}
    \item The first point $X$ reaches outside of $A_-$ is already in $\mathcal{U}$.
    \item The first point $X$ reaches outside of $A_-$ is in $\mathcal{A}_\ell$. Later, the first point that $X$ reaches outside of $\mathcal{A}$ is in $\mathcal{U}$.
    \item The first point $X$ reaches outside of $A_-$ is in $\mathcal{A}_r$. Later, the first point that $X$ reaches outside of $\mathcal{U}$ is in $\mathcal{U}$.
\end{itemize}
Using this decomposition, we break up $f(-x_0)$ (the quantity we seek to put an upper bound on) into three additive terms:
\begin{align*}
    f(-x_0) &= \mathbb{P}_{-x_0} \left( X_{\tau_{\mathcal{A}}} \in \mathcal{U} \right)\\
    &= \mathbb{P}_{-x_0} \left( X_{\tau_{A_-}} \in \mathcal{U} \right) + \mathbb{P}_{-x_0} \left( X_{\tau_{A_-}} \in \mathcal{A}_\ell, X_{\tau_{\mathcal{A}}} \in \mathcal{U} \right) + \mathbb{P}_{-x_0} \left( X_{\tau_{A_-}} \in \mathcal{A}_r, X_{\tau_{\mathcal{A}}} \in \mathcal{U} \right).
\end{align*}
By \eqref{GEHEbound}, this becomes
\begin{equation} \label{threeTerms}
    f(-x_0) \leq \mathbb{P}_{-x_0} \left( X_{\tau_{A_-}} \in \mathcal{U} \right) + G(\mathcal{A}_\ell) H(\mathcal{A}_\ell) + G(\mathcal{A}_r) H(\mathcal{A}_r).
\end{equation}

We will handle each term from \eqref{threeTerms} separately.
First, we use \eqref{LsfApplied} (the equation we derived from the L\'{e}vy system formula) to put an upper bound on $\mathbb{P}_{-x_0} \left( X_{\tau_{A_-}} \in \mathcal{U} \right)$. Then we apply Lemma \ref{IntermediateJumpE1E2} in two different ways to derive upper bounds on $G(\mathcal{A}_\ell)$ and $H(\mathcal{A}_r)$. We use a short argument involving the translation-invariance of the process to put an upper bound on $G(\mathcal{A}_r)$. Finally, we use Lemma \ref{PreferredSideW} (one of the ``Preferred side" lemmas from Section \ref{PreferredSideSection}) to put an upper bound on $H(\mathcal{A}_\ell)$.

\subsection{\texorpdfstring{Using the L\'{e}vy system formula to obtain an upper bound on $\mathbb{P}_{-x_0}(X_{\tau_{A_-}} \in \mathcal{U})$}{TEXT}}

Let us first handle the term $\mathbb{P}_{-x_0} \left( X_{\tau_{A_-}} \in \mathcal{U} \right)$. We will compare it to $\mathbb{P}_{x_0} \left( X_{\tau_{\mathcal{A}}} \in \mathcal{U} \right)$, using the L\'{e}vy system formula.

\begin{lemma} \label{MainLsfFormula}
If $\alpha$, $c$, $\mathcal{A}$, $\mathcal{U}$, $f$, and $x_0$ are as in Proposition \ref{DiagramProp}, then
\begin{equation} \label{MainLsfUseEquation}
    \mathbb{P}_{-x_0} \left( X_{\tau_{\mathcal{A}}} \in \mathcal{U} \right) \leq \alpha^{-d} \frac{j(R)}{j(cR)} f(x_0).
\end{equation}
\end{lemma}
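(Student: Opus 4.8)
\emph{Reading of the statement and strategy.} The left-hand side of \eqref{MainLsfUseEquation} is meant to be $\mathbb{P}_{-x_0}\bigl(X_{\tau_{A_-}} \in \mathcal{U}\bigr)$, i.e.\ the first summand of \eqref{threeTerms} (consistent with this subsection's heading); with $\tau_{\mathcal{A}}$ in place of $\tau_{A_-}$ the bound would, combined with $f(x_0) \le \sup_{B(0,(1-\alpha/2)R)} f$, already yield Proposition~\ref{DiagramProp} without its second term, which is not the intent, so I prove $\mathbb{P}_{-x_0}\bigl(X_{\tau_{A_-}}\in\mathcal{U}\bigr) \le \alpha^{-d} j(R) j(cR)^{-1} f(x_0)$. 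We may assume $j(cR)>0$, since otherwise $j$ vanishes on $[cR,\infty)$, hence $J(\cdot,\mathcal{U})\equiv 0$ on $A_-$ (every point of $\mathcal{U}$ lies at distance $>R$ from $A_-$, see below), so the left side is $0$ while the right side is not a well-defined number. The key tool is the L\'evy system identity \eqref{LsfApplied}, applied on both sides: since $A_-$ and $\mathcal{U}$ are disjoint and open,
\[
    \mathbb{P}_{-x_0}\bigl(X_{\tau_{A_-}} \in \mathcal{U}\bigr) = \mathbb{E}_{-x_0}\!\left[\int_0^{\tau_{A_-}} J(X_s,\mathcal{U})\,ds\right],
\]
while, since $A_+\subseteq\mathcal{A}$ (so $\tau_{A_+}\le\tau_{\mathcal{A}}$) and the integrand is non-negative,
\[
    f(x_0) = \mathbb{E}_{x_0}\!\left[\int_0^{\tau_{\mathcal{A}}} J(X_s,\mathcal{U})\,ds\right] \ge \mathbb{E}_{x_0}\!\left[\int_0^{\tau_{A_+}} J(X_s,\mathcal{U})\,ds\right].
\]
The plan is to bound $J(\cdot,\mathcal{U})$ by a constant from above on $A_-$ and from below on $A_+$, factor these constants out of the two occupation integrals, and then match $\mathbb{E}_{-x_0}[\tau_{A_-}]$ with $\mathbb{E}_{x_0}[\tau_{A_+}]$.

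\emph{The two pointwise estimates.} If $x\in A_-$ then $\inner{x,e_1}<-(1-2\alpha)R$, while $\inner{y,e_1}>(1+\alpha)R$ for every $y\in\mathcal{U}$; hence $|x-y|\ge\inner{y-x,e_1}>(2-\alpha)R>R$ (using $\alpha<1$), so $j(|x-y|)\le j(R)$ because $j$ is non-increasing, and integrating over $\mathcal{U}$ gives $J(x,\mathcal{U})\le j(R)\,|\mathcal{U}|$ for all $x\in A_-$. For the lower bound, set $\mathcal{U}' := \bigl((1+\alpha)R,(1+3\alpha)R\bigr)\times B^{(d-1)}(0,\alpha R)$, which is contained in $\mathcal{U}$ since $\alpha\le1$. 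If $x\in A_+$ and $y\in\mathcal{U}'$, then $|\inner{x-y,e_1}|<5\alpha R$ and the component of $x-y$ orthogonal to $e_1$ has norm $<2\alpha R$, so $|x-y|^2<(25+4)\alpha^2R^2 = 29\alpha^2R^2\le c^2R^2$ by the standing hypothesis $\sqrt{29}\,\alpha\le c$; hence $j(|x-y|)\ge j(cR)$, and integrating over $\mathcal{U}'$ gives $J(x,\mathcal{U})\ge j(cR)\,|\mathcal{U}'|$ for all $x\in A_+$. Since $X_s\in A_-$ for $s<\tau_{A_-}$ and $X_s\in A_+$ for $s<\tau_{A_+}$, feeding these into the two displays above yields $\mathbb{P}_{-x_0}\bigl(X_{\tau_{A_-}}\in\mathcal{U}\bigr)\le j(R)\,|\mathcal{U}|\,\mathbb{E}_{-x_0}[\tau_{A_-}]$ and $f(x_0)\ge j(cR)\,|\mathcal{U}'|\,\mathbb{E}_{x_0}[\tau_{A_+}]$.

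\emph{Matching the occupation times and concluding.} A direct computation gives $A_- = A_+ - 2(1-\alpha)R\,e_1$ and $-x_0 = x_0 - 2(1-\alpha)R\,e_1$, so by translation invariance of the L\'evy process $X$, the law of $\tau_{A_-}$ under $\mathbb{P}_{-x_0}$ equals that of $\tau_{A_+}$ under $\mathbb{P}_{x_0}$; in particular $\mathbb{E}_{-x_0}[\tau_{A_-}] = \mathbb{E}_{x_0}[\tau_{A_+}]$, and this common value is finite because $f(x_0)\le1$ and the lower bound force $\mathbb{E}_{x_0}[\tau_{A_+}]\le\bigl(j(cR)\,|\mathcal{U}'|\bigr)^{-1}$. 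Combining,
\[
    \mathbb{P}_{-x_0}\bigl(X_{\tau_{A_-}}\in\mathcal{U}\bigr) \le j(R)\,|\mathcal{U}|\,\mathbb{E}_{x_0}[\tau_{A_+}] \le \frac{j(R)}{j(cR)}\cdot\frac{|\mathcal{U}|}{|\mathcal{U}'|}\,f(x_0),
\]
and since $|\mathcal{U}| = 2R\,|B^{(d-1)}(0,R)|$ and $|\mathcal{U}'| = 2\alpha R\,|B^{(d-1)}(0,\alpha R)| = \alpha^d\,|\mathcal{U}|$, the ratio $|\mathcal{U}|/|\mathcal{U}'|$ is exactly $\alpha^{-d}$, which gives \eqref{MainLsfUseEquation}.

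\emph{Main obstacle.} The delicate point is not the structure (L\'evy system plus an occupation-time comparison) but getting the constant to be exactly $\alpha^{-d}$ rather than some larger multiple: this forces the lower-bound cylinder $\mathcal{U}'$ to have $e_1$-length $2\alpha R$, so that the factor $2$ in $|\mathcal{U}|$ cancels and $|\mathcal{U}'| = \alpha^d|\mathcal{U}|$ exactly. Requiring that such a $\mathcal{U}'$ still fit inside $\mathcal{U}$ while remaining within distance $cR$ of every point of $A_+$ — the worst case being $(5\alpha)^2+(2\alpha)^2 = 29\alpha^2$ — is precisely what the hypothesis $\sqrt{29}\,\alpha\le c$ buys. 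The remaining ingredients (the two one-sided distance estimates, pulling constants out of the occupation integrals by non-negativity and monotonicity of $j$, and the translation identity for the occupation times) are routine.
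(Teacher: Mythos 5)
Your proof is correct and follows essentially the same route as the paper's: bound $J(\cdot,\mathcal{U})$ from above by $j(R)\,|\mathcal{U}|$ on $A_-$ and from below by $j(cR)\,\alpha^d|\mathcal{U}|$ on $A_+$ via a small sub-cylinder of $\mathcal{U}$, apply the L\'evy system identity \eqref{LsfApplied} to both occupation integrals, and equate $\mathbb{E}_{-x_0}[\tau_{A_-}]=\mathbb{E}_{x_0}[\tau_{A_+}]$ by translation invariance before using $A_+\subseteq\mathcal{A}$ to pass to $f(x_0)$. You also correctly repaired two misprints that the paper's own argument glosses over — the left-hand side of \eqref{MainLsfUseEquation} should read $\mathbb{P}_{-x_0}\left(X_{\tau_{A_-}}\in\mathcal{U}\right)$, which is what is actually proved and used in \eqref{threeTerms}, and the comparison cylinder must have $e_1$-length $2\alpha R$ (your $\mathcal{U}'$), since the paper's $U$ as printed neither lies within distance $cR$ of every point of $A_+$ nor has volume $\alpha^d|\mathcal{U}|$ — and your treatment of the degenerate case $j(cR)=0$ is a harmless extra.
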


\begin{proof}
The first coordinate of each point in $A_-$ is at most $(-1+2\alpha) R$, and the first coordinate of any point in $U$ is at least $(1+\alpha)R$. Therefore, for all $y \in A_-$ and $z \in \mathcal{U}$, $|z-y| \geq (2-\alpha) R \geq R$, so $J(y, z) \leq j(R)$. Since this holds for all $z \in \mathcal{U}$,
\begin{equation} \label{Jnegau}
    J(y, \mathcal{U}) \leq |\mathcal{U}| \, j(R) \qquad\mbox{for all $y \in A_-$}.
\end{equation}
Similarly, for all $y \in A_+$ and $z \in \mathcal{U}$, $|z-y| < \sqrt{5^2+2^2} \alpha R = \sqrt{29}\alpha R \leq cR$, so $J(y, z) \geq j(cR)$. Since this holds for all $z \in \mathcal{U}$,
\begin{equation} \label{Jau}
    J(y, \mathcal{U}) \geq J(y, U) \geq |U| \, j(cR) = \alpha^d |\mathcal{U}| \, j(cR) \qquad\mbox{for all $y \in A_+$}.
\end{equation}

By \eqref{LsfApplied} and \eqref{Jnegau},
\begin{align}
    \mathbb{P}_{-x_0} \left( X_{\tau_{A_-}} \in \mathcal{U} \right) &= \mathbb{E}_{-x_0} \left[ \int_0^{\tau_{A_-}} J(X_s, \mathcal{U}) \, ds \right] \quad\mbox{(by \eqref{LsfApplied})} \nonumber\\
    &\leq \mathbb{E}_{-x_0} \left[ \int_0^{\tau_{A_-}} \, ds \right] \cdot |\mathcal{U}| \, j(R) \quad\mbox{(by \eqref{Jnegau})} \nonumber\\
    &=\mathbb{E}_{-x_0} \left[ \tau_{A_-} \right] \, |\mathcal{U}| \, j(R). \label{MainLsfUseEquation1}
\end{align}
Applying \eqref{LsfApplied} and \eqref{Jau} similarly gives
\begin{equation} \label{MainLsfUseEquation2}
    \mathbb{P}_{x_0} \left( X_{\tau_{\mathcal{A}}} \right) \geq \mathbb{E}_{x_0} \left[ \tau_{A_+} \right] \, \alpha^d |\mathcal{U}| \, j(cR).
\end{equation}

By \eqref{MainLsfUseEquation1} and \eqref{MainLsfUseEquation2},
\begin{equation} \label{MainLsfUseEquation3}
    \frac{\mathbb{P}_{-x_0}\left( X_{\tau_{A_-}} \in \mathcal{U} \right)}{\mathbb{P}_{x_0}\left( X_{\tau_{A_+}} \in \mathcal{U} \right)} \leq \frac{\mathbb{E}_{-x_0} \left[ \tau_{A_-} \right]}{\mathbb{E}_{x_0} \left[ \tau_{A_+} \right]} \frac{|\mathcal{U}|}{\alpha^d |\mathcal{U}|} \frac{j(R)}{j(cR)}.
\end{equation}
Since $X$ is translation-invariant, $\mathbb{E}_{-x_0} \left[ \tau_{A_-} \right] = \mathbb{E}_{x_0} \left[ \tau_{A_+} \right]$. Also, since $A_+ \subseteq \mathcal{A}$, we have $\mathbb{P}_{x_0} (X_{\tau_{A_+}} \in \mathcal{U}) \leq \mathbb{P}_{x_0}(X_{\tau_{\mathcal{A}}} \in \mathcal{U}) = f(x_0)$. Therefore, \eqref{MainLsfUseEquation3} implies \eqref{MainLsfUseEquation}.
\end{proof}

\subsection{\texorpdfstring{Upper bounds on $G(\mathcal{A}_\ell)$, $H(\mathcal{A}_r)$, and $G(\mathcal{A}_r)$}{TEXT}}

Next, let us use Lemma \ref{IntermediateJumpE1E2} to put upper bounds on both $G(\mathcal{A}_\ell)$ and $H(\mathcal{A}_r)$.

\begin{lemma} \label{IntermediateJumpHarGal}
Both $G(\mathcal{A}_\ell)$ and $H(\mathcal{A}_r)$ are less than or equal to $\mathbb{P}_0 \left( X_{\tau_{B(0, \alpha R)}} \in B(0, 10 R) \right)$.
\end{lemma}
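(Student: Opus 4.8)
\textbf{Proof plan for Lemma \ref{IntermediateJumpHarGal}.} The plan is to verify that each of the two quantities can be written in the form $\mathbb{P}_x(X_{\tau_{E_1}} \in E_2)$ for a suitable choice of disjoint open sets $E_1, E_2$ and starting point $x \in E_1$, and that in each case the geometric hypotheses of Lemma \ref{IntermediateJumpE1E2} hold with $r_1 = \alpha R$ and $r_2 = 10R$; then the conclusion follows immediately from that lemma. So the real content is two elementary distance computations in the cylinders of Figure \ref{Diagram}.

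For the bound on $G(\mathcal{A}_\ell) = \mathbb{P}_{-x_0}(X_{\tau_{A_-}} \in \mathcal{A}_\ell)$, I would take $E_1 = A_-$, $E_2 = \mathcal{A}_\ell$, and $x = -x_0 = -(1-\alpha)Re_1$. These sets are disjoint (indeed $\mathcal{A}_\ell$ was defined to exclude $A_-$) and open. The two things to check are: (i) $B(-x_0, \alpha R)$ does not meet $\mathcal{A}_\ell$ — here I would note that $A_-$ is the slab $(-R, -(1-2\alpha)R) \times B^{(d-1)}(0, \alpha R)$, the point $-x_0$ sits at its center in the $e_1$-direction, and a ball of radius $\alpha R$ around $-x_0$ stays inside $A_-$ (which is disjoint from $\mathcal{A}_\ell$); (ii) $B(-x_0, 10R)$ contains all of $\mathcal{A}_\ell$, which is clear since $\mathcal{A}_\ell \subseteq \mathcal{A} = (-R,R) \times B^{(d-1)}(0,R)$, whose diameter is at most $\sqrt{4 + 4}\,R < 10R$, and $-x_0 \in \mathcal{A}$, so every point of $\mathcal{A}$ is within distance $\operatorname{diam}(\mathcal{A}) < 10R$ of $-x_0$. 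Lemma \ref{IntermediateJumpE1E2} then gives $G(\mathcal{A}_\ell) \leq \mathbb{P}_0(X_{\tau_{B(0,\alpha R)}} \in B(0, 10R))$.

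For the bound on $H(\mathcal{A}_r) = \max_{y \in \mathcal{A}_r} \mathbb{P}_y(X_{\tau_{\mathcal{A}}} \in \mathcal{U})$, I would fix an arbitrary $y \in \mathcal{A}_r$ and apply Lemma \ref{IntermediateJumpE1E2} with $E_1 = \mathcal{A}$, $E_2 = \mathcal{U}$, $x = y$, $r_1 = \alpha R$, $r_2 = 10R$; taking the maximum over $y$ at the end preserves the inequality since the right-hand side $\mathbb{P}_0(X_{\tau_{B(0,\alpha R)}} \in B(0,10R))$ does not depend on $y$. The checks: $\mathcal{A}$ and $\mathcal{U}$ are disjoint open sets; $B(y, \alpha R)$ must not meet $\mathcal{U}$ — since $y \in \mathcal{A}_r$ has first coordinate in $(0, R]$ while every point of $\mathcal{U}$ has first coordinate at least $(1+\alpha)R$, the gap in the $e_1$-direction is at least $\alpha R$, so a ball of radius $\alpha R$ about $y$ cannot reach $\mathcal{U}$; and $B(y, 10R)$ must contain $\mathcal{A} \cup \mathcal{U}$ — both cylinders lie in $(-R, (3+\alpha)R) \times B^{(d-1)}(0,R)$, a set of diameter at most $\sqrt{(4+\alpha)^2 + 4}\,R < 10R$ (using $\alpha < c < 1$), and $y$ lies in it, so every point is within $10R$ of $y$. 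Again Lemma \ref{IntermediateJumpE1E2} finishes it.

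The only mildly delicate point — and the one I would be most careful about — is check (i) for $G(\mathcal{A}_\ell)$: one must confirm that $B(-x_0, \alpha R)$ really does avoid $\mathcal{A}_\ell$, which boils down to checking that this ball stays inside the thin slab $A_-$. The radial (transverse) extent is exactly $\alpha R$, matching the radius of $B^{(d-1)}(0, \alpha R)$, and in the $e_1$-direction $-x_0$ has coordinate $-(1-\alpha)R$, which lies at distance $\alpha R$ from the near face $\{-(1-2\alpha)R\}$ and distance $\alpha R$ from the far face $\{-R\}$ of $A_-$; so $B(-x_0, \alpha R) \subseteq \overline{A_-}$, and since $\mathcal{A}_\ell \cap \overline{A_-}$ lies only on the shared boundary (a null set that does not affect the event), the intersection with the open set $\mathcal{A}_\ell$ is empty. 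The other distance bounds are slack enough that the constant $10$ absorbs them comfortably. This completes the proof.
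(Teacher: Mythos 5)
Your proposal is correct and follows essentially the same route as the paper: both bounds are obtained by applying Lemma \ref{IntermediateJumpE1E2} with $r_1=\alpha R$, $r_2=10R$, once with $(x,E_1,E_2)=(-x_0,A_-,\mathcal{A}_\ell)$ and once with $E_1=\mathcal{A}$, $E_2=\mathcal{U}$ (the paper does the latter for every $x\in\mathcal{A}$ and then specializes to $\mathcal{A}_r$, which is the same computation). Your geometric checks, including the delicate verification that $B(-x_0,\alpha R)$ stays inside $A_-$ and the diameter bounds absorbed by the constant $10$, match the paper's reasoning.
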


\begin{proof}
By applying Lemma \ref{IntermediateJumpE1E2} with $x=-x_0$, $E_1=A_-$, $E_2=\mathcal{A}_\ell$, $r_1=\alpha R$, and $r_2=10 R$,
\begin{equation*}
    G(\mathcal{A}_\ell) = \mathbb{P}_{-x_0} \left( X_{\tau_{A_-}} \in \mathcal{A}_\ell \right) \leq \mathbb{P}_0 \left( X_{\tau_{B(0, \alpha R)}} \in B(0, 10 R ) \right).
\end{equation*}
For all $x \in \mathcal{A}$, by applying Lemma \ref{IntermediateJumpE1E2} with $E_1=\mathcal{A}$, $E_2=\mathcal{U}$, $r_1=\alpha R$, and $r_2=10R$, we have
\begin{equation*}
    f(x) = \mathbb{P}_x (X_{\tau_{\mathcal{A}}} \in \mathcal{U} ) \leq \mathbb{P}_0 \left( X_{\tau_{B(0, \alpha R)}} \in B(0, 10 R ) \right) \qquad\mbox{for all $x \in A$}.
\end{equation*}
This means that
\begin{equation} \label{HEBound}
    H(E) = \max_{y \in E} f(y) \leq \mathbb{P}_0 \left( X_{\tau_{B(0, \alpha R)}} \in B(0, 10 R ) \right) \qquad\mbox{for all measurable $E \subseteq \mathcal{A} \setminus A_-$}.
\end{equation}
In particular, by applying \eqref{HEBound} to $E=\mathcal{A}_r$,
\begin{equation*}
    H(\mathcal{A}_r) \leq \mathbb{P}_0 \left( X_{\tau_{B(0, \alpha R)}} \in B(0, 10 R ) \right).
\end{equation*}
\end{proof}

To put an upper bound on $G(\mathcal{A}_r)$, we simply use the translation-invariance of the process $X$.

\begin{lemma} \label{GArBound}
$G(\mathcal{A}_r) \leq f(x_0)$.
\end{lemma}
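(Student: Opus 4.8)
The goal is to bound $G(\mathcal{A}_r) = \mathbb{P}_{-x_0}(X_{\tau_{A_-}} \in \mathcal{A}_r)$ by $f(x_0) = \mathbb{P}_{x_0}(X_{\tau_{\mathcal{A}}} \in \mathcal{U})$. The plan is to exploit translation-invariance of $X$ together with the geometric fact that $\mathcal{A}_r$ sits to the right of $A_-$ in roughly the same relative position as $\mathcal{U}$ sits to the right of $A_+$. More precisely, I would translate the whole picture by the vector $2R e_1$ (the offset between the centers of $A_-$ and $A_+$): under this translation $A_-$ maps to $A_+$, and the starting point $-x_0 = -(1-\alpha)Re_1$ maps to $(1+\alpha)Re_1$. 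I then want to check that the image of $\mathcal{A}_r$ under this translation is contained in $\mathcal{U}$, and that the image of the starting point, while not equal to $x_0$, can be compared to $x_0$ — or, better, I would choose the translation vector so the starting point maps exactly to $x_0 = (1-\alpha)Re_1$, i.e. translate by $2(1-\alpha)Re_1$, and then check that the image of $\mathcal{A}_r$ under this translation lands inside $\mathcal{U}$ and that the image of $A_-$ lands inside $\mathcal{A}$.

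Concretely, with translation vector $v = 2(1-\alpha)R e_1$: the point $-x_0$ maps to $x_0$; the cylinder $A_- = (-R, -(1-2\alpha)R) \times B^{(d-1)}(0, \alpha R)$ maps to $((1-2\alpha)R, (3-2\alpha+2\alpha-2\alpha\cdot\!\cdots)R)\times\cdots$ — I would compute the first-coordinate interval carefully: $A_- + v = \big((2(1-\alpha)-1)R,\ (2(1-\alpha)-(1-2\alpha))R\big)\times B^{(d-1)}(0,\alpha R) = \big((1-2\alpha)R,\ (1)R\big)\times B^{(d-1)}(0,\alpha R) = A_+$. Good: $A_- + v = A_+ \subseteq \mathcal{A}$. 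Next, $\mathcal{A}_r \subseteq \mathcal{A}_r \subseteq (0,R)\times B^{(d-1)}(0,R)$, so $\mathcal{A}_r + v \subseteq \big(2(1-\alpha)R,\ (2(1-\alpha)+1)R\big) \times B^{(d-1)}(0,R) = \big((2-2\alpha)R,\ (3-2\alpha)R\big)\times B^{(d-1)}(0,R)$. Since $\mathcal{U} = \big((1+\alpha)R, (3+\alpha)R\big)\times B^{(d-1)}(0,R)$, we need $(2-2\alpha) \geq 1+\alpha$, i.e. $1 \geq 3\alpha$, and $(3-2\alpha) \leq 3+\alpha$, which is automatic; the first holds since $\alpha \leq c/\sqrt{29} < 1/\sqrt{29} < 1/3$. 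So $\mathcal{A}_r + v \subseteq \mathcal{U}$.

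With these two containments in hand, the argument closes quickly: by translation-invariance of $X$,
\begin{align*}
    G(\mathcal{A}_r) &= \mathbb{P}_{-x_0}\left(X_{\tau_{A_-}} \in \mathcal{A}_r\right) = \mathbb{P}_{x_0}\left(X_{\tau_{A_+}} \in \mathcal{A}_r + v\right) \\
    &\leq \mathbb{P}_{x_0}\left(X_{\tau_{A_+}} \in \mathcal{U}\right) \leq \mathbb{P}_{x_0}\left(X_{\tau_{\mathcal{A}}} \in \mathcal{U}\right) = f(x_0),
\end{align*}
where the last inequality uses $A_+ \subseteq \mathcal{A}$ so that once $X$ has exited $A_+$ at a point of $\mathcal{U}$ (which is disjoint from $\mathcal{A}$, hence $X$ has already exited $\mathcal{A}$) we have $X_{\tau_{\mathcal{A}}} = X_{\tau_{A_+}} \in \mathcal{U}$; more carefully one uses the Strong Markov property at $\tau_{A_+}$ as in the derivation of \eqref{GEHEbound}, noting $f \equiv 1$ on $\mathcal{U}$. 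The only real obstacle is bookkeeping: verifying the inclusion $\mathcal{A}_r + v \subseteq \mathcal{U}$, which is where the hypothesis $\sqrt{29}\alpha \leq c < 1$ (forcing $\alpha < 1/3$) gets used, and making sure the "exit of $A_+$ into $\mathcal{U}$ forces exit of $\mathcal{A}$ into $\mathcal{U}$" step is stated precisely rather than hand-waved.
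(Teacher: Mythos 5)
Your proof is correct and is essentially the paper's proof: the paper likewise couples the process started at $-x_0$ with the process started at $x_0$ (with identical increments, i.e.\ a shift by $2x_0 = 2(1-\alpha)Re_1$), observes $A_- + 2x_0 = A_+$ and $\mathcal{A}_r + 2x_0 \subseteq \mathcal{U}$, and concludes $G(\mathcal{A}_r) \leq \mathbb{P}_{x_0}(X_{\tau_{A_+}} \in \mathcal{U}) \leq f(x_0)$. You have merely filled in the explicit coordinate computations and the $\alpha < 1/3$ check that the paper leaves to its figure.
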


\begin{figure}[ht]
\begin{framed}
\captionof{figure}{A visual aide for the proof of Lemma \ref{GArBound}. Given two different versions of the process $X$, with initial points $-x_0$ and $x_0$ (respectively) and identical increments, if $X_{\tau_{A_-}} \in \mathcal{A}_r$ for the first process, then $X_{\tau_{\mathcal{A}}} \in (\mathcal{A}_r + 2x_0) \subseteq U$ for the second process.}
\label{ParallelJumpsFigure}
\centering
\begin{tikzpicture}[scale=0.7]
    \draw[color=blue] (-3, -3) -- (3, -3) -- (3, 3) -- (-3, 3) -- (-3, -3);
    
    \draw[color=blue] (3.5, -3) -- (9.5, -3) -- (9.5, 3) -- (3.5, 3) -- (3.5, -3);
    
    \draw[color=red] (-3, -.5) -- (-2, -.5) -- (-2, .5) -- (-3, .5) -- (-3, -.5);
    
    \draw[color=red] (3, -.5) -- (2, -.5) -- (2, .5) -- (3, .5) -- (3, -.5);
    
    
    
    \draw[color=purple] (0, 2.94) -- (2.94, 2.94) -- (2.94, -2.94) -- (0, -2.94) -- (0, 2.94);
    
    \draw[color=purple] (5, 2.94) -- (7.94, 2.94) -- (7.94, -2.94) -- (5, -2.94) -- (5, 2.94);
    
    \node[shape=circle, fill=black, scale=0.1, label=left:$0$] at (0, 0) {$0$};
    
    
    
    \node[color=red] at (-2.5, 0) {$A_-$};
    
    \node[color=red] at (2.5, 0) {$A_+$};
    
    
    \node[color=purple] at (1.5, 2.7) {$\mathcal{A}_r$};
    \node[color=purple] at (6.5, 2.7) {$\mathcal{A}_r + 2x_0$};
    
    \node[color=blue] at (0, -3.3) {$\mathcal{A}$};
    
    \node[color=blue] at (6.5, -3.3) {$\mathcal{U}$};
    
    \draw[color=cyan] (-2.6, 0.4) -> (0.1, 2);
    \draw[color=orange] (-2.5, 0) -> (1.5, 2);
    \draw[color=green] (-2.2, -0.3) -> (2.9, 2);
    
    \draw[color=cyan] (2.4, 0.4) -> (5.1, 2);
    \draw[color=orange] (2.5, 0) -> (6.5, 2);
    \draw[color=green] (2.8, -0.3) -> (7.9, 2);
\end{tikzpicture}
\end{framed}
\end{figure}
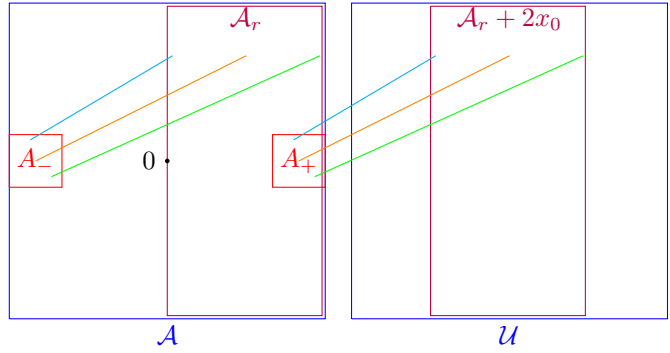

\begin{proof}
Suppose two different versions of $X$, one starting at $-x_0$ and the other starting at $x_0$, have identical increments. In order to have $X_{\tau_{A_-}} \in \mathcal{U}$ for the first process, we must have $X_{\tau_{A_+}} \in \mathcal{U}$ for the second process, as demonstrated Figure \ref{ParallelJumpsFigure}.

Therefore,
\begin{equation*}
    G(\mathcal{A}_r) = \mathbb{P}_{-x_0} \left( X_{\tau_{A_-}} \in \mathcal{A}_r \right) \leq \mathbb{P}_{x_0} \left( X_{\tau_{A_+}} \in \mathcal{U} \right) \leq f(x_0).
\end{equation*}
\end{proof}

\subsection{\texorpdfstring{Using the ``Preferred side" lemmas to obtain an upper bound on $H(\mathcal{A}_\ell)$}{TEXT}}

All that remains is to show that $H(\mathcal{A}_\ell) = \max_{y \in \mathcal{A}_\ell} \mathbb{P}_y \left( X_{\tau_{\mathcal{A}}} \in \mathcal{U} \right)$ is bounded above by $f(x)$ for some $x \in B(0, (1-\alpha/2) R)$. Intuitively, one would think that $f(x)$ should be increasing as $x$ gets closer to $\mathcal{U}$, and therefore, $H(\mathcal{A}_\ell)$ should easily be less than $f(x_0)$, as $x_0$ is much closer to $\mathcal{U}$ than any point in $\mathcal{A}_\ell$. Unfortunately, it proved surprisingly hard to come up with a proof that encapsulated this idea. We were however able to find a proof that 
$f(y) \leq f(0)$ for all $y \in \mathcal{A}_\ell$.
This proof depends on the following lemma, and Lemma \ref{PreferredSideW} (the technical ``Preferred side" lemma proved in Section \ref{PreferredSideSection}).

\begin{lemma} \label{CenteringIncreasesJU}
Suppose $y=(y_1, \tilde{y})$ and $z=(y_1, \tilde{z})$ are two points in $\mathcal{A}$ that share the same first coordinate, with $y_1 \in (-R, R)$ and $\tilde{y}, \tilde{z} \in B^{(d-1)}(0, R)$, and suppose that $|\tilde{y}| \leq |\tilde{z}|$. Then $J(y, \mathcal{U}) \geq J(z, \mathcal{U})$.
\end{lemma}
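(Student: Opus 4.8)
The plan is to strip the statement down to a one‑dimensional fact about overlapping intervals, using two structural features: the monotonicity of $j$ and the rotational symmetry of $\mathcal{U}$ about the $x_1$‑axis. Since $j\colon(0,\infty)\to[0,\infty)$ is non‑increasing, for each $t>0$ the superlevel set $\{r>0:j(r)>t\}$ is, up to its right endpoint, an interval $(0,\rho(t))$ with $\rho(t)\in[0,\infty]$. Writing $j(|x-w|)=\int_0^\infty\indicatorWithSetBrackets{|x-w|<\rho(t)}\,dt$ and using Tonelli's theorem (the integrand is non‑negative),
\[
J(x,\mathcal{U})=\int_{\mathcal{U}}j(|x-w|)\,dw=\int_0^\infty\bigl|\,\mathcal{U}\cap B(x,\rho(t))\,\bigr|\,dt .
\]
Hence it suffices to prove that $\bigl|\mathcal{U}\cap B(y,\rho)\bigr|\ge\bigl|\mathcal{U}\cap B(z,\rho)\bigr|$ for every $\rho>0$.

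Next I would exploit the rotational symmetry of $\mathcal{U}=I\times B^{(d-1)}(0,R)$, where $I:=\bigl((1+\alpha)R,(3+\alpha)R\bigr)$ and the cross‑section ball is centred at the origin of $\mathbb{R}^{d-1}$: any rotation of the last $d-1$ coordinates fixes both $\mathcal{U}$ and Lebesgue measure and is an isometry of $\mathbb{R}^d$, so $\bigl|\mathcal{U}\cap B((y_1,\tilde x),\rho)\bigr|$ depends on $\tilde x\in\mathbb{R}^{d-1}$ only through $|\tilde x|$. Writing $u:=|\tilde y|\le|\tilde z|=:u'$, it therefore suffices to show that $u\mapsto\bigl|\mathcal{U}\cap B((y_1,u e_2),\rho)\bigr|$ is non‑increasing on $[0,\infty)$. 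Slicing this set along the $x_2$‑axis, for fixed $(w_1,w_3,\dots,w_d)$ with $w_1\in I$ the admissible values of $w_2$ form the interval $(-a,a)\cap(u-b,u+b)$, where $a^2=R^2-\sum_{j\ge3}w_j^2$ and $b^2=\rho^2-(w_1-y_1)^2-\sum_{j\ge3}w_j^2$ (the slice being empty unless both radicands are positive). Integrating in $(w_1,w_3,\dots,w_d)$, the whole question reduces to the claim that the length of $(-a,a)\cap(u-b,u+b)$ is non‑increasing in $u$ on $[0,\infty)$.

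That last claim is elementary: as $u$ grows, the interval $(u-b,u+b)$ has fixed length $2b$ and slides to the right, away from the fixed symmetric interval $(-a,a)$, so the overlap can only shrink (equivalently, this length equals $\bigl(\indicator_{(-a,a)}\ast\indicator_{(-b,b)}\bigr)(u)$, a symmetric unimodal function of $u$). Taking $u=|\tilde y|\le|\tilde z|=u'$ and integrating the resulting inequality back over $(w_1,w_3,\dots,w_d)$ gives $\bigl|\mathcal{U}\cap B(y,\rho)\bigr|\ge\bigl|\mathcal{U}\cap B(z,\rho)\bigr|$, and then integrating over $t$ yields $J(y,\mathcal{U})\ge J(z,\mathcal{U})$. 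I do not expect a genuine analytic obstacle here; the only real work is bookkeeping — justifying the layer‑cake/Tonelli step for a merely monotone $j$, carrying out the rotational reduction so that the hypothesis $|\tilde y|\le|\tilde z|$ may be replaced by colinear centres, and keeping track of the degenerate slices where a radicand vanishes.
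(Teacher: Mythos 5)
Your proof is correct, but it takes a genuinely different route from the paper. Both arguments begin with the same rotational reduction (replacing $\tilde y,\tilde z$ by colinear points $(y_1,y_2,0,\dots,0)$ and $(y_1,z_2,0,\dots,0)$ with $z_2\ge y_2\ge 0$), but the key comparison differs. The paper stays at the level of the integrals $J(\cdot,\mathcal{U})$: it sets $\Uhat:=\mathcal{U}+(z-y)$, uses translation invariance to write $J(y,\mathcal{U})=J(z,\Uhat)$, and then compares $J(z,\Uhat\setminus\mathcal{U})$ with $J(z,\mathcal{U}\setminus\Uhat)$ via the measure-preserving reflection $u\mapsto(u_1,z_2-y_2-u_2,u_3,\dots,u_d)$ together with a pointwise distance comparison and the monotonicity of $j$. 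You instead decompose $j$ by the layer-cake formula, reducing the lemma to the geometric statement $|\mathcal{U}\cap B(y,\rho)|\ge|\mathcal{U}\cap B(z,\rho)|$ for every $\rho$, which you prove by one-dimensional slicing: the slice overlap is $\bigl(\indicator_{(-a,a)}\ast\indicator_{(-b,b)}\bigr)(u)$, a symmetric unimodal function of the transverse offset $u$. Your route is slightly heavier on bookkeeping (Tonelli, superlevel sets, degenerate slices) but buys more: it shows that $x\mapsto J(x,\mathcal{U})$ is nonincreasing in the transverse offset for \emph{any} target set that is symmetric and decreasing in the $w_2$-slices, with monotonicity of $j$ entering only through its superlevel sets being balls; the paper's reflection trick is shorter and entirely pointwise, needing no layer-cake step. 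One incidental benefit of your write-up: the paper's final displays contain sign slips (the concluding inequality is printed as $J(y,\mathcal{U})-J(z,\mathcal{U})\le 0$ although the intended and needed conclusion, consistent with the announced claim $J(z,f(u))\ge J(z,u)$, is $\ge 0$), whereas your argument keeps the direction of the inequality transparent throughout.
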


\begin{proof}
By the symmetry in the geometry of $\mathcal{A}$ and $\mathcal{U}$, we can assume without loss of generality that $y$ and $z$ are of the form
\begin{equation*}
    y=(y_1, y_2, 0, \dots, 0), \qquad z=(y_1, z_2, 0, \dots, 0)
\end{equation*}
where $z_2 \geq y_2 \geq 0$. (The values of $J(y, \mathcal{U})$ and $J(z, \mathcal{U})$ do not change if we replace $y$ with $(y_1, |\tilde{y}|, 0, \dots, 0)$, and $z$ with $(y_1, |\tilde{z}|, 0, \dots, 0)$.)

Recall that $\mathcal{U} = \left((1+\alpha)R, (3+\alpha)R \right) \times B^{(d-1)}(0, R)$.
Let
\begin{equation*}
    \Uhat := \left((1+\alpha)R, (3+\alpha)R \right) \times B^{(d-1)}\left((0, z_2-y_2, 0, \dots, 0), R \right).
\end{equation*}
Note that $\Uhat=\mathcal{U}+(z-y)$; in other words, $\Uhat$ is the result of shifting $\mathcal{U}$ by $z-y$. By translation-invariance,
\begin{equation} \label{yU=zUhat}
    J(y, \mathcal{U}) = J(z, \Uhat).
\end{equation}
Thus,
\begin{align}
    J(y, \mathcal{U})-J(z, \mathcal{U}) &= J(z, \Uhat)-J(z, \mathcal{U}) \quad\mbox{(by \eqref{yU=zUhat})} \nonumber\\
    &= \left( J(z, \mathcal{U} \cap \Uhat ) + J(z, \Uhat \setminus \mathcal{U} ) \right) - \left( J(z, \mathcal{U} \cap \Uhat ) + J(z, \mathcal{U} \setminus \Uhat ) \right) \nonumber\\
    &= J(z, \Uhat \setminus \mathcal{U}) - J(z, \mathcal{U} \setminus \Uhat ) \nonumber\\
    &= \int_{\Uhat\setminus\mathcal{U}} J(z, u) \, du - \int_{\mathcal{U}\setminus\Uhat} J(z, u). \label{JdifferenceIntermsofUdifferences}
\end{align}

Let us consider the following bijection $f : \mathcal{U} \setminus \Uhat \to \Uhat \setminus \mathcal{U}$, defined by
\begin{equation*}
    f(u_1, u_2, u_3, \dots, u_d) := (u_1, z_2-y_2-u_2, u_3, \dots, u_d).
\end{equation*}
This is clearly a bijection, as its inverse is given by the same formula. It also preserves measure. Therefore, \eqref{JdifferenceIntermsofUdifferences} can be re-written as
\begin{equation} \label{JdifferenceAsIntegralOfDifferences}
    J(y, \mathcal{U})-J(z, \mathcal{U}) = \int_{\mathcal{U}\setminus\Uhat} \left( J(z, f(u)) - J(z, u) \right) \, du.
\end{equation}

We will show that $J(z, f(u)) \geq J(z, u)$ for all $u \in \mathcal{U}\setminus\Uhat$. This will allow us to prove that the quantity in \eqref{JdifferenceAsIntegralOfDifferences} is non-negative. Fix $u=(u_1, u_2, u_3, \dots, u_d) \in \mathcal{U}\setminus\Uhat$. By the definitions of $\mathcal{U}$ and $\Uhat$,
\begin{equation*}
    \sqrt{u_1^2+u_2^2+u_3^2+\cdots+u_d^2} < R \qquad\mbox{and}\qquad \sqrt{u_1^2+(z_2-y_2-u_2)^2+u_3^2+\cdots+u_d^2} \geq R.
\end{equation*}
Thus,
\begin{equation} \label{CompareSecondCoordinates}
    |u_2| < |u_2-(z_2-y_2)|.
\end{equation}
It is a fact that for any $a>0$, the set  $\{x:|x|<|x-a|\}$ is equal to $\{x:x\leq \frac{a}{2}\}$. (To see this, graph the functions $|x|$ and $|x-a|$.) Therefore, \eqref{CompareSecondCoordinates} is equivalent to
\begin{equation}\label{u2}
    u_2 < \frac12(z_2-y_2).
\end{equation}
It follows from \eqref{u2} that
\begin{equation*}
    z_2-u_2 > \frac12(z_2+y_2)
\end{equation*}
and
\begin{equation*}
    y_2+u_2 < \frac12(z_2+y_2).
\end{equation*}
Thus,
\begin{equation}\label{z2-u21}
    z_2-u_2 > y_2+u_2.
\end{equation}
Recall that $z_2 \geq 0 \geq -y_2$. Thus,
\begin{equation} \label{z2-u22}
    z_2-u_2 \geq -y_2-u_2.
\end{equation}
By \eqref{z2-u21} and \eqref{z2-u22},
\begin{equation}\label{z2-u2}
    z_2-u_2 \geq |y_2+u_2|.
\end{equation}
Therefore,
\begin{align}
    J(z, f(u)) &= J \Big( (y_1, z_2, 0, \dots, 0), (u_1, z_2-y_2-u_2, u_3, \dots, u_d) \Big) \nonumber\\
    &= j\left( \sqrt{(y_1-u_1)^2 + (y_2+u_2)^2 + u_3^2 + \cdots + u_d^2}\right) \nonumber\\
    &\leq j\left( \sqrt{(y_1-u_1)^2 + (z_2-u_2)^2 + u_3^2 + \cdots + u_d^2}\right) \quad\mbox{(by \eqref{z2-u2})} \nonumber\\
    &= J \Big( (y_1, z_2, 0, \dots, 0), (u_1, u_2, u_3, \dots, u_d) \Big) \nonumber\\
    &= J(z, u).\label{DifferencesAllNeg}
\end{align}

By \eqref{JdifferenceAsIntegralOfDifferences} and \eqref{DifferencesAllNeg},
\begin{equation*}
    J(y, \mathcal{U}) - J(z, \mathcal{U}) \leq 0.
\end{equation*}

\end{proof}
\FloatBarrier

\begin{lemma} \label{HalBound}
$H(\mathcal{A}_\ell) \leq f(0)$.
\end{lemma}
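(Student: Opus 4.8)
The plan is to show that $f(y) \le f(0)$ for every $y \in \mathcal{A}_\ell$, which immediately gives $H(\mathcal{A}_\ell) = \max_{y\in\mathcal{A}_\ell} f(y) \le f(0)$, and since $0 \in B(0,(1-\alpha/2)R)$ this is exactly what we need (recall $f(0) \le \sup_{B(0,(1-\alpha/2)R)} f$). To compare $f(y)$ with $f(0)$ for a fixed $y \in \mathcal{A}_\ell$, I would first reduce to the case where $y$ lies in the $2$-plane spanned by $e_1$ and $e_2$: by the rotational symmetry of $\mathcal{A}$ and $\mathcal{U}$ about the $e_1$-axis, $f$ depends only on the first coordinate and the distance to the axis, so we may assume $y = (y_1, y_2, 0, \dots, 0)$ with $y_1 \le 0$. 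The idea is then to interpolate between $y$ and $0$ in two monotone steps: first slide $y$ along the $e_1$-axis from $y_1$ to $0$ (keeping the transverse coordinate fixed), and then — if needed — slide the transverse coordinate back to $0$; at each step I want to invoke Lemma~\ref{PreferredSideW} with an appropriately chosen hyperplane $H$.

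Concretely, to compare $f$ at two points differing in a single coordinate, say $f(a) \le f(b)$ where $b$ is "closer to $\mathcal{U}$," I take $H$ to be the hyperplane perpendicular to the relevant coordinate axis passing through the midpoint of $a$ and $b$, with the preferred side $V$ being the half-space containing $b$ (and $\mathcal{U}$, when we slide along $e_1$). Then $a = b'$ is the reflection of $b$, and I apply Lemma~\ref{PreferredSideW} with $D = \mathcal{A}$ and $g(x) = J(x, \mathcal{U})$: by \eqref{LsfApplied}, $f(x) = \mathbb{E}_x[\int_0^{\tau_{\mathcal{A}}} g(X_t)\,dt]$, so the conclusion of Lemma~\ref{PreferredSideW} gives $f(a) = f(b') \le f(b)$ — provided the two hypotheses \eqref{DUlargerThanDV} and \eqref{GLargerOnU} hold. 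The hypothesis \eqref{DUlargerThanDV} holds because $\mathcal{A}$ is a cylinder symmetric under reflection across any hyperplane perpendicular to $e_1$ through a point of $(-R,R)\times\{0\}$ and across any hyperplane perpendicular to $e_2,\dots,e_d$. The hypothesis \eqref{GLargerOnU} — that $g(w) \le g(w')$ for $w \in W$ — is exactly where Lemma~\ref{CenteringIncreasesJU} (and its $e_1$-analogue) enters: for the transverse slides, points on the preferred side are closer to the $e_1$-axis and hence have larger $J(\cdot, \mathcal{U})$ by Lemma~\ref{CenteringIncreasesJU}; for the axial slide, I need the analogous monotonicity statement that moving toward $\mathcal{U}$ along $e_1$ increases $J(\cdot,\mathcal{U})$, which follows by the same reflection-and-bijection argument as in Lemma~\ref{CenteringIncreasesJU} (or directly, since translating $\mathcal{U}$ away corresponds to comparing $J(z,\mathcal{U})$ with $J(z,\widehat{\mathcal{U}})$ for a shifted copy).

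The main obstacle is bookkeeping the geometry so that at every interpolation step the pair (point on non-preferred side, its reflection) both lie in $\mathcal{A}$ and satisfy the $g$-monotonicity, and in particular handling the axial slide cleanly: when we reflect a point $a=(a_1,a_2,0,\dots,0)$ with $a_1<0$ across the hyperplane $\{x_1 = a_1/2\}$ we land on $(0,a_2,0,\dots,0)$, and we must check that the preferred side (the half-space $\{x_1 > a_1/2\}$, which contains $\mathcal{U}$) indeed has $g$ larger — this requires the $e_1$-direction monotonicity of $J(\cdot,\mathcal{U})$, which is intuitively clear but needs the same careful proof as Lemma~\ref{CenteringIncreasesJU}. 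Once both monotone slides are justified, chaining $f(y) \le f((0,y_2,0,\dots,0)) \le f(0)$ completes the argument; if instead one prefers to do a single slide, one can note that any $y\in\mathcal{A}_\ell$ can be moved to $0$ by a slide in the direction of $-y$ across the perpendicular bisector of $y$ and $0$, with preferred side containing $0$ and $\mathcal{U}$ — but then verifying \eqref{GLargerOnU} requires combining both monotonicity facts at once, so the two-step version is cleaner to write.
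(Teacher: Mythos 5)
Your proposal follows essentially the same route as the paper: reduce by rotational symmetry to $y=(y_1,y_2,0,\dots,0)$, then chain $f(y)\leq f((0,y_2,0,\dots,0))\leq f(0)$ via two applications of Lemma \ref{PreferredSideW} with $D=\mathcal{A}$, $g(x)=J(x,\mathcal{U})$, and reflecting hyperplanes perpendicular to $e_1$ and $e_2$ respectively, with \eqref{GLargerOnU} for the transverse step supplied by Lemma \ref{CenteringIncreasesJU}. The only minor difference is that for the axial slide no analogue of Lemma \ref{CenteringIncreasesJU} is needed: since $\mathcal{U}$ lies entirely on the preferred side of the bisecting hyperplane, every point of $\mathcal{U}$ is pointwise closer to $w'$ than to $w$, so \eqref{GLargerOnU} follows directly from the monotonicity of $j$, which is how the paper handles it.
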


\begin{figure}
\begin{framed}
\captionof{figure}{Proof of Lemma \ref{HalBound}}
\label{HalProofFigure}
\centering
\begin{tikzpicture}[scale=0.7]
    \draw[color=blue] (-3, -3) -- (3, -3) -- (3, 3) -- (-3, 3) -- (-3, -3);
    
    \draw[color=blue] (3.5, -3) -- (9.5, -3) -- (9.5, 3) -- (3.5, 3) -- (3.5, -3);
    
    \draw[color=blue] (0, 3) -- (0, 1.5);
    \draw[color=blue] (0, 0) -- (0, -3);
    
    \node[color=blue] at (0, -3.3) {$\mathcal{A}$};
    
    \node[color=blue] at (6.5, -3.3) {$\mathcal{U}$};
    
    \node[shape=circle, fill=black, scale=0.3, label=right:$0$] (0) at (0, 0) {$0$};
    
    \node[shape=circle, fill=black, scale=0.3, label=left:$y$] (y) at (-2, 1.5) {$y$};
    
    \node[shape=circle, fill=black, scale=0.3, label=right:$z$] (z) at (0, 1.5) {$z$};
    
    \draw[color=red] (-1, 4) -- (-1, -4);
    \node[color=red] at (-1, -4.5) {$H_1$};
    \node[color=red] at (-1.5, -2.5) {$W_1$};
    \node[color=red] at (-.5, -2.5) {$V_1$};
    
    \draw[color=red] (-4, .75) -- (10.5, .75);
    \node[color=red] at (-4.5, .75) {$H_2$};
    \node[color=red] at (2, 1.25) {$W_2$};
    \node[color=red] at (2, .25) {$V_2$};
    
    \draw[dashed] (y) -- (z);
    \draw[dashed] (0) -- (z);
\end{tikzpicture}
\end{framed}
\end{figure}

\begin{proof}
We would like to show that $\sup_{y \in \mathcal{A}_\ell} f(y) \leq f(0)$. Fix $y \in \mathcal{A}_\ell$.

By the symmetry in the geometry of $\mathcal{A}$ and $\mathcal{U}$, we can us assume without loss of generality that $y=(y_1, y_2, 0, \dots, 0)$ for some $y_2 \geq 0$. (The value of $f(y)$ does not change if $y=(y_1, y_2, y_3, \dots, y_d)$ is replaced with $(y_1, \sqrt{y_2^2+y_3^2+\cdots y_d^2}, 0, \dots, 0)$.)

Let $z=(0, y_2, 0, \dots, 0)$. Let $H_1$ be the hyperplane consisting of all points that are at an equal distance from $y$ and $z$. Let $H_2$ be the hyperplane consisting of all points that are at an equal distance from $z$ and $0$.

In order to follow this proof, we recommend the reader look at Figure \ref{HalProofFigure}.

We will first apply Lemma \ref{PreferredSideW} to $H=H_1$, $D=\mathcal{A}$, and $g(x)=J(x, \mathcal{U})$, where $V=V_1$ (the preferred side) is the side of $H_1$ containing $z$, and $W=W_1$ is the side containing $y$. We must verify \eqref{DUlargerThanDV} and \eqref{GLargerOnU}.

Let us start with \eqref{DUlargerThanDV}. Fix $w \in \mathcal{A} \cap W_1$. We must show that $w'$ (the reflection of $w$ across $H_1$) is also in $\mathcal{A}$. We can write $w$ as $w=(w_1, \tilde{w})$ for some $w_1 \in (-R, y_1/2]$ and $\tilde{w} \in B^{(d-1)}(0, R)$. This means that $w'=(y_1-w_1, \tilde{w})$. Recall that $-R < w_1 \leq y_1/2 \leq 0$, so
\begin{equation*}
    y_1-w_1 \geq y_1 > -R
\end{equation*}
and
\begin{equation*}
    y_1-w_1 \leq 0-(-R) = R.
\end{equation*}
Since $-R < y_1-w_1 < R$ and $\tilde{w} \in B^{(d-1)}(0, R)$, we have $w'=(y_1-w_1, \tilde{w}) \in (-R, R) \times B^{(d-1)}(0, R) = \mathcal{A}$, as desired.

Now let us check \eqref{GLargerOnU}. Given $w \in \mathcal{A} \cap W_1$ and $w' \in \mathcal{A} \cap V_1$, every $x \in \mathcal{U}$ is closer to $w'$ than $w$. Since $j(r)$ is non-increasing, this means that $g(w)=J(w, \mathcal{U}) \leq J(w', \mathcal{U}) = g(w')$.

Now that we have verified its conditions, Lemma \ref{PreferredSideW} tells us that
\begin{equation} \label{YzComparison}
    f(y) \leq f(z).
\end{equation}

Next, we will make a similar use of Lemma \ref{PreferredSideW} in order to show that $f(z) \leq f(0)$. Let $H=H_2$, $D=\mathcal{A}$, and $g(x)=J(x, \mathcal{U})$. Let $V=V_2$ (the preferred side) be the side of $H_2$ containing $0$, and let $W=W_2$ be the side containing $z$. We must again verify \eqref{DUlargerThanDV} and \eqref{GLargerOnU}.

Recall that $y=(y_1, y_2, 0, \dots, 0)$ and $z=(0, y_2, 0, \dots, 0)$.
Then $H_2$ (the hyperplane of points equidistant from $0$ and $z$) is equal to the set of $(x_1, x_2, \dots, x_d)$ such that $x_2 = y_2/2$. Furthermore, $V_2$ is the set of $(x_1, x_2, \dots, x_d)$ such that $x_2 \geq y_2/2$ and $W_2$ is the set of $(x_1, x_2, \dots, x_d)$ such that $x_2 < y_2/2$.

Fix $w=(w_1, w_2, w_3, \dots, w_d) \in \mathcal{A} \cap W_2$. Since $H=H_2$ is the hyperplane of points whose second coordinate is $y_2/2$, the reflection of $w$ across $H_2$ is $w'=(w_1, y_2-w_2, w_3, \dots, w_d)$. In order to verify \eqref{DUlargerThanDV} and \eqref{GLargerOnU}, we must check that $w' \in \mathcal{A}$ and $J(w, \mathcal{U}) \leq J(w', \mathcal{U})$.

We clearly have $w_1 \in (-R, R)$, since $w \in \mathcal{A}$. Since $y_2 \geq 0$ and $w \in W_2$, we also have
\begin{align*}
    (y_2-w_2)^2 &= y_2^2 - 2y_2 w_2 + w_2^2\\
    &= y_2 (y_2-2w_2) + w_2^2\\
    &\leq w_2^2 \qquad\mbox{(since $y_2 \geq 0$ and $w_2 \leq y_2/2$)}.
\end{align*}
Thus,
\begin{equation*}
    |(y_2-w_2, w_3, \dots, w_d)| \leq |(w_2, w_3, \dots, w_d)| < R. 
\end{equation*}

Since $w_1 \in (-R, R)$ and $|(y_2-w_2, w_3, \dots, w_d)| < R$, we have
\begin{equation*}
    w'=(w_1, w_2, w_3, \dots, w_d) \in (-R, R) \times B^{(d-1)}(0, R) = \mathcal{A}
\end{equation*}
so \eqref{DUlargerThanDV} is confirmed.

Since $|(y_2-w_2, w_3, \dots, w_d)| \leq |(w_2, w_3, \dots, w_d)|$, Lemma \ref{CenteringIncreasesJU} tells us that $J(w', \mathcal{U}) \geq J(w, \mathcal{U})$, so \eqref{GLargerOnU} is confirmed.

By Lemma \ref{PreferredSideW},
\begin{equation}\label{Z0Comparison}
    f(z) \leq f(0).
\end{equation}

By \eqref{YzComparison} and \eqref{Z0Comparison}, $f(y) \leq f(0)$.
\end{proof}

\FloatBarrier

We have now proven upper bounds for all of the necessary terms, and are ready to complete the proof of Proposition \ref{DiagramProp}.

\begin{proof}[Proof of Proposition \ref{DiagramProp}]
Recall that $x_0$ and $0$ belong to $B(0, (1-\alpha/2)R)$.
If we start with \eqref{threeTerms} and then replace each of the terms $\mathbb{P}_{-x_0}\left( X_{\tau_{A_-}} \in \mathcal{U} \right)$, $G(\mathcal{A}_\ell)$, $H(\mathcal{A}_\ell)$, $G(\mathcal{A}_r)$, and $H(\mathcal{A}_r)$ with its respective upper bound from Lemmas \ref{MainLsfFormula}-\ref{GArBound} and \ref{HalBound}, we obtain
\begin{align*}
    f(-x_0) \leq & \alpha^{-d} \frac{j(R)}{j(cR)} f(x_0)\\
    &+ \mathbb{P}_0 \left( X_{\tau_{B(0, \alpha R)}} \in B(0, 10 R ) \right) \cdot f(0)\\
    &+ f(x_0) \cdot \mathbb{P}_0 \left( X_{\tau_{B(0, \alpha R)}} \in B(0, 10 R ) \right)\\
    \leq & \left( \alpha^{-d} \frac{j(R)}{j(cR)} + 2 \mathbb{P}_0 \left( X_{\tau_{B(0, \alpha R)}} \in B(0, 10 R ) \right) \right) \sup_{B(0, (1-\alpha/2) R)} f.
\end{align*}
\end{proof}

\section{Specific counterexamples} \label{CounterexamplesSection}

In this section, we construct specific subordinated Brownian motions that satisfy the conditions of Theorem \ref{MainResult}, and therefore do not satisfy $\EHI$. 

First, let us distinguish between $\EHI$ at large scales and $\EHI$ at small scales. We say that $X$ satisfies the \textit{large-scale elliptic Harnack inequality} ($\LargeEHI$) if there exist $C_1, C_2 > 0$ and $\kappa \in (0, 1)$ such that for all $x_0 \in \mathbb{R}^d$ and $r \geq C_1$, if $h$ is a non-negative function that is harmonic on $B(x_0, r)$, then
\begin{equation}\label{LargeHarDef}
    h(x) \leq C_2 h(y) \qquad\mbox{for all $x, y \in B(x_0, \kappa r)$}.
\end{equation}
We say that $X$ satisfies the \textit{small-scale elliptic Harnack inequality} ($\SmallEHI$) if there exist $C_1, C_2 > 0$ and $\kappa \in (0, 1)$ such that for all $x_0 \in \mathbb{R}^d$ and $r \in (0, C_1]$, if $h$ is a non-negative function that is harmonic on $B(x_0, r)$, then
\begin{equation}\label{SmallHarDef}
    h(x) \leq C_2 h(y) \qquad\mbox{for all $x, y \in B(x_0, \kappa r)$}.
\end{equation}
Note that like $\EHI$, both $\LargeEHI$ and $\SmallEHI$ do not depend on the specific value of $\kappa$.

Clearly, both $\LargeEHI$ and $\SmallEHI$ are necessary for $\EHI$ to hold. We construct SBMs for which $\LargeEHI$ fails, and SBMs for which $\SmallEHI$ fails.

\subsection{\texorpdfstring{A general recipe for counterexamples to $\EHI$}{TEXT}}

The following theorem gives us a general recipe for producing SBMs which fail to satisfy $\EHI$. We will use this recipe to generate two concrete examples, Example \ref{LargeScaleDiracSumExample} (which fails to satisfy $\LargeEHI$) and Example \ref{SmallScaleDiracSumExample} (which fails to satisfy $\SmallEHI$).

Before we state the theorem, let us introduce the following notation: we say $a_n \ll b_n$ (or equivalently, $b_n \gg a_n$) whenever $(a_n)$ and $(b_n)$ are sequences of positive numbers such that $a_n = o(b_n)$.

\begin{theorem} \label{GeneralRecipe}
Let $X=(X_t)=(W(S_t))$ be a subordinated Brownian motion on $\mathbb{R}^d$, let $\mu$ be the L\'{e}vy measure of the subordinator $S$, and let $\gamma$ be the drift of $S$. Suppose $(a_n)$, $(b_n)$, and $(c_n)$ are sequences such that $0 < a_n \leq b_n \leq c_n$.
For all $n$, let
\begin{equation} \label{GeneralRecipeRnDef}
    R_n := \sqrt{a_n \left( d \log(\frac{c_n}{b_n}) + 2 \log(\frac{\mu([a_n, b_n])}{\mu((b_n, \infty))}) \right)}
\end{equation}
and
\begin{equation} \label{GeneralRecipeThetaDef}
    \theta_n := \frac{\gamma+\int_{(0, b_n]} s \, \mu(ds)}{\mu((b_n, \infty))}.
\end{equation}

If we have both
\begin{equation} \label{GeneralRecipeCondition}
    c_n \gg R_n^2 \gg \max\{ b_n, \theta_n\}
\end{equation}
and
\begin{equation} \label{cn}
    \frac{\int_{(b_n, \infty)} s^{-d/2} \, \mu(ds)}{\mu((b_n, \infty))} \leq c_n^{-d/2},
\end{equation}
then $X$ satisfies the conditions of Theorem \ref{MainResult}, and therefore $X$ does not satisfy $\EHI$.
\end{theorem}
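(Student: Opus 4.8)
The plan is to verify that $X$, with the sequence $(R_n)$ of \eqref{GeneralRecipeRnDef} and any fixed pair $c\in(0,1)$, $\alpha\in(0,c/\sqrt{29}]$ (for concreteness $c=\tfrac12$, $\alpha=\tfrac{1}{2\sqrt{29}}$), satisfies both conditions in \eqref{Thm1Conditions}; the conclusion then follows at once from Theorem \ref{MainResult}. Abbreviate $M_n:=\mu([a_n,b_n])$, $N_n:=\mu((b_n,\infty))$ and $X_n:=(c_n/b_n)^{d/2}(M_n/N_n)$. The point of \eqref{GeneralRecipeRnDef} is precisely that $e^{R_n^2/(2a_n)}=X_n$, and since $a_n\le b_n$ and $R_n^2\gg b_n$ by \eqref{GeneralRecipeCondition}, we have $X_n\ge e^{R_n^2/(2b_n)}\to\infty$.

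For the jump-kernel ratio, split the representation \eqref{JumpKernel} at $b_n$, writing $j(r)=I_1(r)+I_2(r)$ with $I_1(r):=\int_{(0,b_n]}(2\pi s)^{-d/2}e^{-r^2/(2s)}\,\mu(ds)$ and $I_2(r)$ the integral over $(b_n,\infty)$. In $I_1(R_n)$, factor $e^{-R_n^2/(2s)}=e^{-c^2R_n^2/(2s)}\,e^{-(1-c^2)R_n^2/(2s)}$ and use $e^{-(1-c^2)R_n^2/(2s)}\le e^{-(1-c^2)R_n^2/(2b_n)}$ on $(0,b_n]$; integrating gives $I_1(R_n)\le e^{-(1-c^2)R_n^2/(2b_n)}\,I_1(cR_n)\le e^{-(1-c^2)R_n^2/(2b_n)}\,j(cR_n)$. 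In $I_2(R_n)$, drop the exponential and apply \eqref{cn}: $I_2(R_n)\le(2\pi)^{-d/2}\int_{(b_n,\infty)}s^{-d/2}\mu(ds)\le(2\pi c_n)^{-d/2}N_n$. Bounding $j(cR_n)$ from below by its $[a_n,b_n]$-contribution (with $(2\pi s)^{-d/2}\ge(2\pi b_n)^{-d/2}$ and $e^{-c^2R_n^2/(2s)}\ge e^{-c^2R_n^2/(2a_n)}=X_n^{-c^2}$ there) yields $j(cR_n)\ge(2\pi b_n)^{-d/2}X_n^{-c^2}M_n$, hence $I_2(R_n)/j(cR_n)\le(b_n/c_n)^{d/2}(N_n/M_n)X_n^{c^2}=X_n^{c^2-1}$. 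Altogether $j(R_n)/j(cR_n)\le e^{-(1-c^2)R_n^2/(2b_n)}+X_n^{c^2-1}\to0$, because $R_n^2/b_n\to\infty$, $X_n\to\infty$ and $1-c^2>0$.

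For the exit probability, decompose $S=S^{\mathrm{sm}}+S^{\mathrm{big}}$, where $S^{\mathrm{big}}$ is the compound Poisson process of rate $N_n$ built from the jumps of $S$ of size $>b_n$ and $S^{\mathrm{sm}}$ carries the drift $\gamma$ and the jumps of size $\le b_n$. Let $T_1$ be the first jump time of $S^{\mathrm{big}}$ (so $T_1\sim\mathrm{Exp}(N_n)$ is independent of $S^{\mathrm{sm}}$), let $\Delta_1=\Delta S_{T_1}$ have law $\mu(\cdot\cap(b_n,\infty))/N_n$, and put $\sigma:=S^{\mathrm{sm}}_{T_1-}$, with $\mathbb{E}[\sigma]=N_n^{-1}(\gamma+\int_{(0,b_n]}s\,\mu(ds))=\theta_n$. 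Let $B_1$ be the event that $X$ exits $B(0,\alpha R_n)$ before $T_1$; since $S^{\mathrm{big}}$ vanishes before $T_1$, one has $X_t=W(S^{\mathrm{sm}}_t)$ for $t<T_1$, so $B_1\subseteq\{\sup_{u\le\sigma}|W(u)|\ge\alpha R_n\}$. The reflection principle gives $\mathbb{P}(\sup_{u\le v}|W(u)|\ge\alpha R_n)\le C_1 e^{-C_2\alpha^2R_n^2/v}$ for deterministic $v$; splitting on $\{\sigma\le\varepsilon R_n^2\}$ versus $\{\sigma>\varepsilon R_n^2\}$ and using Markov's inequality $\mathbb{P}(\sigma>\varepsilon R_n^2)\le\theta_n/(\varepsilon R_n^2)$, we get $\limsup_n\mathbb{P}(B_1)\le C_1 e^{-C_2\alpha^2/\varepsilon}$ for every $\varepsilon>0$ (since $\theta_n\ll R_n^2$), hence $\mathbb{P}(B_1)\to0$. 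On $B_1^c$ the jump $G_1:=X_{T_1}-X_{T_1-}=W(\sigma+\Delta_1)-W(\sigma)$ is, given $\Delta_1$, an $N(0,\Delta_1 I_d)$ vector independent of $X_{T_1-}$, and a short case analysis shows $B_1^c\cap\{X_{\tau_{B(0,\alpha R_n)}}\in B(0,10R_n)\}\subseteq\{|G_1|<11R_n\}$: either the big jump exits $B(0,\alpha R_n)$ into $B(0,10R_n)$, forcing $|G_1|<11R_n$, or it does not leave $B(0,\alpha R_n)$ at all, forcing $|G_1|<2\alpha R_n$. Since the Gaussian density bound gives $\mathbb{P}(|N(0,tI_d)|<11R_n)\le C_3(R_n^2/t)^{d/2}$, averaging over $\Delta_1$ and invoking \eqref{cn} (which says exactly $N_n^{-1}\int_{(b_n,\infty)}s^{-d/2}\mu(ds)=\mathbb{E}[\Delta_1^{-d/2}]\le c_n^{-d/2}$) yields $\mathbb{P}(|G_1|<11R_n)\le C_3(R_n^2/c_n)^{d/2}\to0$ by \eqref{GeneralRecipeCondition}. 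Thus $\mathbb{P}_0(X_{\tau_{B(0,\alpha R_n)}}\in B(0,10R_n))\le\mathbb{P}(B_1)+\mathbb{P}(|G_1|<11R_n)\to0$.

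With both limits in \eqref{Thm1Conditions} established for our fixed $c$ and $\alpha$, Theorem \ref{MainResult} gives that $X$ does not satisfy $\EHI$. I expect the exit-probability step to be the main obstacle: one must implement the splitting $S=S^{\mathrm{sm}}+S^{\mathrm{big}}$ and the strong Markov property carefully and be precise about whether $X$ exits $B(0,\alpha R_n)$ strictly before, exactly at, or after $T_1$ — in particular the (a priori possible) scenario of a big jump that lands back inside $B(0,\alpha R_n)$ must be handled, and turns out to be harmless precisely because it makes $|G_1|$ small. The jump-kernel step, by contrast, is a short computation once one spots the factorization of $e^{-R_n^2/(2s)}$ and notices that \eqref{GeneralRecipeRnDef} is calibrated so that $e^{R_n^2/(2a_n)}=X_n$.
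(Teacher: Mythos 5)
Your proposal is correct and follows essentially the same route as the paper: both verify the two conditions of Theorem \ref{MainResult} by (i) splitting the jump-kernel integral \eqref{JumpKernel} at $b_n$ and exploiting the calibration $e^{R_n^2/(2a_n)}=(c_n/b_n)^{d/2}\mu([a_n,b_n])/\mu((b_n,\infty))$ together with \eqref{cn}, and (ii) decomposing $S$ into its small-jump part and the first jump of size $>b_n$ at time $T_1\sim\mathrm{Exp}(\mu((b_n,\infty)))$, using Markov's inequality with $\theta_n$ before $T_1$ and \eqref{cn} with $c_n\gg R_n^2$ at the jump. Your only deviations are cosmetic — you bound $j(R_n)/j(cR_n)$ for general $c$ rather than $c=1/2$, and you estimate the Gaussian displacement $|G_1|$ directly via $\mathbb{E}[\Delta_1^{-d/2}]\le c_n^{-d/2}$ instead of introducing the auxiliary scales $q_n,Q_n$ — which if anything quantifies the paper's ``with high probability'' exit-time step a bit more carefully.
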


\begin{proof}
By \eqref{GeneralRecipeCondition},
$R_n^2 \gg b_n$.
Let $f$ be the function $f(n) := R_n^2 / b_n \to \infty$.

For all $n$, recall from \eqref{JumpKernel} that
\begin{align*}
    j(R_n) &= \int_{(0, \infty)} (2\pi s)^{-d/2} \exp(-\frac{R_n^2}{2s}) \, \mu(ds), \\
    j(R_n/2) &= \int_{(0, \infty)} (2\pi s)^{-d/2} \exp(-\frac{R_n^2}{8s}) \, \mu(ds).
\end{align*}
We will use the fact that $R_n^2 \gg b_n$ to show that $[a_n, b_n]$ contributes more to these integrals than $(b_n, \infty)$, and that $\left[(2\pi s)^{-d/2} \exp(-\frac{R_n^2}{8s}) \right] / \left[(2\pi s)^{-d/2} \exp(-\frac{R_n^2}{2s}) \right]$ is large for all $s \in [a_n, b_n]$, so the ratio between $j(R_n/2)$ and $j(R_n)$ is very large.

Note that $\theta_n$ is the expected value of $S_t$ at the instant before the first time $S$ takes a jump of size larger than $b_n$. We will show that with high probability, $|X_t|$ is at most on the order of $\sqrt{\theta_n}$ before the first such jump, and of an order much larger than $R_n$ immediately after. Since $R_n^2 \gg \theta_n$, this means that $\mathbb{P}_0 \left( X_{\tau_{B(0, R_n/20)}} \in B(0, 10 R_n) \right)$ will be small.

Note that our choice of the quantity $R_n$ is the result of reverse-engineering so that both of these arguments work.

By \eqref{GeneralRecipeRnDef} and some elementary algebraic manipulations,
\begin{equation}\label{RearrangeRn}
    \mu([a_n, b_n]) \cdot b_n^{-d/2} \cdot \exp(-\frac{R_n^2}{2a_n}) = \mu((b_n, \infty)) c_n^{-d/2}.
\end{equation}
Note that for all $s \in [a_n, b_n]$, we have $s^{-d/2} \geq b_n^{-d/2}$ and $\exp(-\frac{R_n^2}{2s}) \geq \exp(-\frac{R_n^2}{2a_n})$. Thus,
\begin{equation} \label{[an,bn]estimate}
    \int_{[a_n, b_n]} s^{-d/2} \exp(-\frac{R_n^2}{2s}) \, \mu(ds) \geq \mu([a_n, b_n]) \cdot b_n^{-d/2} \cdot \exp(-\frac{R_n^2}{2a_n}).
\end{equation}
Combining the above estimates, we obtain
\begin{align} \label{[an,bn]ContributesMore}
\begin{split}
    \int_{[a_n, b_n]} s^{-d/2} \exp(-\frac{R_n^2}{2s}) \, \mu(ds) &\geq \mu([a_n, b_n]) \cdot b_n^{-d/2} \cdot \exp(-\frac{R_n^2}{2a_n})  \quad\mbox{(by  \eqref{[an,bn]estimate})}\\
    &= \mu((b_n, \infty)) c_n^{-d/2} \quad\mbox{(by \eqref{RearrangeRn})} \\
    &\geq \int_{(b_n, \infty)} s^{-d/2} \, \mu(ds) \quad\mbox{(by \eqref{cn})} \\
    &\geq \int_{(b_n, \infty)} s^{-d/2} \exp(-\frac{R_n^2}{2s}) \, \mu(ds).
\end{split}
\end{align}

For all $s \leq b_n$,
\begin{equation} \label{RatioOfExps}
    \frac{\exp(-\frac{R_n^2}{8s})}{\exp(-\frac{R_n^2}{2s})} = \exp(\frac{3}{8} \cdot \frac{R_n^2}{s}) \geq \exp(\frac38 \cdot \frac{R_n^2}{b_n}) = e^{\frac{3}{8} f(n)}.
\end{equation}
Thus,
\begin{align*}
    j(R_n/2) &= \int_{(0, \infty)} (2\pi s)^{-d/2} \exp(-\frac{R_n^2}{8s}) \, \mu(ds)\\
    &\geq \int_{(0, b_n]} (2\pi s)^{-d/2} \exp(-\frac{R_n^2}{8s}) \, \mu(ds)\\
    &\geq e^{\frac38 f(n)} \int_{(0, b_n]} (2\pi s)^{-d/2} \exp(-\frac{R_n^2}{2s}) \, \mu(ds) \quad\mbox{(by \eqref{RatioOfExps})}\\
    &\geq \frac12 e^{\frac38 f(n)} \int_{(0, \infty)} (2\pi s)^{-d/2} \exp(-\frac{R_n^2}{2s}) \, \mu(ds) \quad\mbox{(by \eqref{[an,bn]ContributesMore})}\\
    &= \frac12 e^{\frac38 f(n)} j(R_n).
\end{align*}
In other words,
\begin{equation} \label{GeneralRecipeJumpRatio}
    \frac{j(R_n)}{j(R_n/2)} \leq 2e^{-\frac38 f(n)} \to 0.
\end{equation}

For all $n$, let $\tilde{S}^{(n)}= \left( \tilde{S}^{(n)}_t \right)_{t \geq 0}$ be the process
\begin{equation*}
    \tilde{S}^{(n)}_t := S_t - \sum_{0 < u \leq t : S(u)-S(u-) > b_n} \left[ S(u)-S(u-) \right].
\end{equation*}
(In other words, $\tilde{S}^{(n)}$ behaves just like $S$, but without any jumps of size larger than $b_n$.)
Then $\tilde{S}^{(n)}$ is a L\'{e}vy process, with the same drift $\gamma$ as $S$, and with L\'{e}vy measure
\begin{equation*}
    \tilde{\mu}^{(n)}(E) = \mu(E \cap (0, b_n]) \qquad\mbox{for all measurable $E \subseteq (0, \infty)$}.
\end{equation*}
It follows from the probabilistic interpretation of drift and L\'{e}vy measure that for all $t>0$,
\begin{equation*}
    \mathbb{E}\left[ \tilde{S}^{(n)}_t \right] = \left( \gamma+\int_{(0, b_n]} s \, \mu(ds) \right) t.
\end{equation*}
Let $T_n := \inf\{t: S(t)-S(t-) > b_n\}$. Then $T_n$ is exponentially-distributed with rate $\mu((b_n, \infty))$ (and therefore, with mean $1 / \mu((b_n, \infty))$). Since $T_n$ and $\tilde{S}^{(n)}$ are independent,
\begin{equation} \label{ThetanIsTheExpectation}
    \mathbb{E} \left[ \tilde{S}^{(n)}_{T_n} \right] = \left( \gamma+\int_{(0, b_n]} s \, \mu(ds) \right) \cdot \mathbb{E}[T_n] = \frac{ \gamma+\int_{(0, b_n]} s \, \mu(ds)}{\mu((b_n, \infty))} = \theta_n
\end{equation}
(where $\theta_n$ is as defined in \eqref{GeneralRecipeThetaDef}).


Recall from \eqref{GeneralRecipeCondition} that $c_n \gg R_n^2 \gg \theta_n$. Choose some sequences $(Q_n)$ and $(q_n)$ such that $c_n \gg Q_n \gg R_n^2 \gg q_n \gg \theta_n$. (For example, let $Q_n=\sqrt{c_n R_n^2}$ and $q_n=\sqrt{R_n^2 \theta_n}$.) We will show that with high probability, $S(T_n-) < q_n$ and $S(T_n) > Q_n$. This means that with high probability, $|X_t|=|W(S_t)|$ is of a smaller order than $R_n$ for all $t < T_n$, and then $|X(T_n)|$ is of a larger order than $R_n$. Therefore, $\mathbb{P}_0 \left( X_{\tau_{B(0, R_n/20)}} \in B(0, 10 R_n ) \right) \to 0$.

By \eqref{ThetanIsTheExpectation} and Markov's inequality,
\begin{equation*}
    \mathbb{P} \left( S(T_n-) \geq q_n \right) = \mathbb{P} \left( \tilde{S}^{(n)}(T_n) \geq q_n \right) \leq \frac{\theta_n}{q_n} \to 0.
\end{equation*}
Let $\Delta S(T_n) := S(T_n) - S(T_n-)$. By the probabilistic interpretation of $\mu$,
\begin{align*}
    \mathbb{P}\left( |\Delta S(T_n) | \leq Q_n \right) &=\frac{\mu((b_n, Q_n])}{\mu((b_n, \infty))}\\
    &=\frac{\mu((b_n, Q_n]) \cdot Q_n^{-d/2}}{\mu((b_n, \infty)) \cdot Q_n^{-d/2}}\\
    &\leq \frac{\int_{(b_n, Q_n]} s^{-d/2} \, \mu(ds)}{\mu((b_n, \infty)) \cdot Q_n^{-d/2}}\\
    &\leq \frac{\int_{(b_n, \infty)} s^{-d/2} \, \mu(ds)}{\mu((b_n, \infty)) \cdot Q_n^{-d/2}}\\
    &\leq \frac{c_n^{-d/2}}{Q_n^{-d/2}} \qquad\mbox{(by \eqref{cn})}\\
    &= (c_n/Q_n)^{-d/2} \to 0.
\end{align*}
Thus, with high probability, $S(T_n-) < q_n$ and $S(T_n) > Q_n$.

If $X_0=0$, by the time/space scaling of the $d$-dimensional Brownian motion $W$, we also have
\begin{equation*}
    \max_{0 \leq s \leq q_n} |W_s| < \frac{R_n}{20} \qquad\mbox{and}\qquad |X(T_n) - X(T_n-)|=|W(S(T_n)) - W(S(T_n-))| \geq 20 R_n
\end{equation*}
with high probability.

Assuming all of these high probability events occur, for all $t < T_n$ we have $|X_t|=|W(S_t)| < R_n/20$, and at time $T_n$ we have $|X(T_n)| \geq \left(20-\frac{1}{20} \right) R_n > 10 R_n$. Therefore, $\tau_{B(0, R_n/20)} = T_n$, and $X_{\tau_{B(0, R_n/20)}}\notin B(0, 10 R_n)$. This completes our proof that
\begin{equation} \label{GeneralRecipeIntermediateJumps}
    \mathbb{P}_0 \left( X_{\tau_{B(0, R_n/20)}}\in B(0, 10 R_n) \right) \to 0.
\end{equation}
By \eqref{GeneralRecipeJumpRatio} and \eqref{GeneralRecipeIntermediateJumps}, $X$ satisfies the conditions of Theorem \ref{MainResult} for $c=1/2$ and $\alpha=1/20$.
\end{proof}

\subsection{\texorpdfstring{A counterexample in which $\EHI$ fails at large scales}{TEXT}}

Let us apply Theorem \ref{GeneralRecipe} to the specific case where $\mu$ (the L\'{e}vy measure of $S$) is a sum of Dirac measures.
Suppose $(H_m)_{m=0}^\infty$ and $(A_m)_{m=0}^\infty$ are sequences of positive numbers, with $\sum_{m=0}^\infty H_m < \infty$ and $A_m \nearrow \infty$. Let $$\mu = \sum_{m=0}^\infty H_m \delta_{A_m}$$ (where $\delta_x$ denotes the Dirac measure $\delta_x(E) = \indicatorWithSetBrackets{x \in E})$).

Since $\sum_{m=0}^\infty H_m < \infty$, we have $\int_{(0, \infty)} (1 \wedge x) \, \mu(dx) \leq \mu((0, \infty)) < \infty$. Therefore, there exists a subordinator $S$ with L\'{e}vy measure $\mu$.

By applying Theorem \ref{GeneralRecipe}, with $a_n=b_n=A_n$ and $c_n=A_{n+1}$, we obtain the following corollary.

\begin{corollary} \label{IncreasingAm}
Let $\mu$ be the measure
\begin{equation*}
    \mu = \sum_{m=0}^\infty H_m \delta_{A_m}
\end{equation*}
where $(H_m)_{m=0}^\infty$ and $(A_m)_{m=0}^\infty$ are sequences of positive numbers, such that $\sum_{m=0}^\infty H_m < \infty$ and $A_m \nearrow \infty$.

Let $S=(S_t)$ be a non-decreasing, right-continuous L\'{e}vy process on $[0, \infty)$ with L\'{e}vy measure $\mu$ and drift $\gamma \geq 0$. Let $X=(X_t)=(W(S_t))$ be a SBM with subordinator $S$.

For all $n$, let
\begin{equation} \label{IncreasingAmRnDef}
    R_n := \sqrt{A_n \left( d \log(\frac{A_{n+1}}{A_n}) + 2 \log(\frac{H_n}{\sum_{m>n} H_m}) \right)}
\end{equation}
and
\begin{equation} \label{IncreasingAmThetaDef}
    \theta_n := \frac{\gamma+\sum_{m \leq n} H_m A_m}{\sum_{m>n} H_m}.
\end{equation}

If $A_{n+1} \gg R_n^2 \gg \max\{ A_n, \theta_n\}$,
then $X$ satisfies the conditions of Theorem \ref{MainResult}, and therefore $X$ does not satisfy $\LargeEHI$.
\end{corollary}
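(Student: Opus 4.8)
The plan is to obtain Corollary~\ref{IncreasingAm} as an immediate application of Theorem~\ref{GeneralRecipe}, taking the \emph{constant} sequences $a_n = b_n := A_n$ together with $c_n := A_{n+1}$. These satisfy $0 < a_n \leq b_n \leq c_n$ because $(A_m)$ is increasing. With this choice one computes $\mu([a_n,b_n]) = \mu(\{A_n\}) = H_n$, $\mu((b_n,\infty)) = \mu((A_n,\infty)) = \sum_{m>n} H_m > 0$, and $\gamma + \int_{(0,b_n]} s\,\mu(ds) = \gamma + \sum_{m\leq n} H_m A_m$. Substituting these identities into the definitions \eqref{GeneralRecipeRnDef} and \eqref{GeneralRecipeThetaDef} reproduces verbatim the formulas \eqref{IncreasingAmRnDef} and \eqref{IncreasingAmThetaDef}, so the quantities $R_n$ and $\theta_n$ of Theorem~\ref{GeneralRecipe} agree with those of the corollary, and it only remains to verify the two hypotheses \eqref{GeneralRecipeCondition} and \eqref{cn}.

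Hypothesis \eqref{GeneralRecipeCondition}, namely $c_n \gg R_n^2 \gg \max\{b_n,\theta_n\}$, becomes under our substitution exactly the assumption $A_{n+1} \gg R_n^2 \gg \max\{A_n,\theta_n\}$ imposed in the corollary. For hypothesis \eqref{cn}, observe that each index $m > n$ satisfies $A_m \geq A_{n+1} = c_n$, hence $A_m^{-d/2} \leq c_n^{-d/2}$; weighting by $H_m$ and dividing by $\sum_{m>n} H_m > 0$ gives
\begin{equation*}
    \frac{\int_{(b_n,\infty)} s^{-d/2}\,\mu(ds)}{\mu((b_n,\infty))} \;=\; \frac{\sum_{m>n} H_m A_m^{-d/2}}{\sum_{m>n} H_m} \;\leq\; c_n^{-d/2},
\end{equation*}
which is precisely \eqref{cn}. (Here one uses tacitly that $R_n^2$ is a well-defined positive sequence, i.e.\ that the logarithmic expression inside the square root in \eqref{IncreasingAmRnDef} is eventually positive; this is forced by the assumed relation $R_n^2 \gg A_n > 0$.) Theorem~\ref{GeneralRecipe} now applies and shows that $X$ meets the hypotheses of Theorem~\ref{MainResult}, so $X$ does not satisfy $\EHI$.

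It then remains to sharpen ``$X$ does not satisfy $\EHI$'' to ``$X$ does not satisfy $\LargeEHI$''. The idea is that the radii at which the Harnack inequality is violated go to infinity: from $R_n^2 \gg A_n$ and $A_m \nearrow \infty$ one gets $R_n^2 \to \infty$, hence $R_n \to \infty$. In the proof of Theorem~\ref{MainResult} --- applied with the constants $c = 1/2$, $\alpha = 1/20$ that the proof of Theorem~\ref{GeneralRecipe} supplies --- the contradiction is produced on the ball $B(0,R_n)$: a nonnegative function harmonic on $B(0,R_n)$ violates the Harnack inequality on $B(0,(1-\alpha/2)R_n)$ for all large $n$. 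If $\LargeEHI$ held with threshold radius $C_1$, then for every $n$ large enough that $R_n \geq C_1$ the inequality would apply to $B(0,R_n)$, contradicting this. Hence $\LargeEHI$ fails. The whole argument is routine; the only point needing a moment's care is this last one --- checking that the counterexample scales $R_n$ are unbounded and that the proof of Theorem~\ref{MainResult} carries over unchanged under the weaker hypothesis $\LargeEHI$.
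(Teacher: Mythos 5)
Your proposal is correct and follows exactly the same route as the paper, which obtains the corollary by applying Theorem~\ref{GeneralRecipe} with $a_n = b_n = A_n$ and $c_n = A_{n+1}$. You have additionally made explicit why the conclusion is failure of $\LargeEHI$ rather than merely $\EHI$ --- observing that $R_n^2 \gg A_n \to \infty$ forces $R_n \to \infty$, so the balls on which the proof of Theorem~\ref{MainResult} exhibits the Harnack violation have unbounded radii --- a point the paper leaves implicit.
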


For a concrete example, consider the case when $H_m=2^{-m}$, $A_m=2^{m^2}$, and $\gamma=0$.

\begin{example} \label{LargeScaleDiracSumExample}
Let $\mu$ be the measure
\begin{equation*}
    \mu = \sum_{m=0}^\infty H_m \delta_{A_m}
\end{equation*}
where $H_m=2^{-m}$ and $A_m=2^{m^2}$.

Let $S=(S_t)$ be a non-decreasing, right-continuous L\'{e}vy process on $[0, \infty)$ with L\'{e}vy measure $\mu$ and drift $0$. Let $X=(X_t)=(W(S_t))$ be a SBM with subordinator $S$.
Then $X$ does not satisfy $\LargeEHI$.
\end{example}

\begin{proof}
Let $R_n$ and $\theta_n$ be as defined in \eqref{IncreasingAmRnDef} and \eqref{IncreasingAmThetaDef}. We must verify that $A_{n+1} \gg R_n^2 \gg \max\{A_n, \theta_n\}$. A simple calculation shows that
\begin{equation*}
    R_n^2 = 2^{n^2} \cdot \frac{d}{\log_2 e} \cdot (2n+1),
\end{equation*}
so $A_{n+1} \gg R_n^2 \gg A_n$. It remains to show that $R_n^2 \gg \theta_n$.

The value of $\theta_n$ is
\begin{align*}
    \theta_n &= \frac{\sum_{m=0}^n 2^{-m} 2^{m^2}}{2^{-n}} = 2^n \sum_{m=0}^n 2^{m^2-m}\\
    &= 2^n \left( 2^{n^2-n} + \sum_{m=0}^{n-1} 2^{m(m-1)} \right)\\
    &=2^{n^2} + 2^n \sum_{m=0}^{n-1} 2^{m(m-1)} \\
    &\leq 2^{n^2} + 2^n \cdot n \cdot 2^{(n-1)(n-2)}\\
    &= 2^{n^2} \left( 1 + n \cdot 2^{-2n+2} \right) \ll R_n^2.
\end{align*}
The quantity $R_n^2$ is on the order of $2^{n^2}n$, while $\theta_n$ is on the order of $2^{n^2}$.
\end{proof}

\subsection{\texorpdfstring{A counterexample in which $\EHI$ fails at small scales}{TEXT}}

Let us also apply Theorem \ref{GeneralRecipe} to the case where $\mu=\sum_{m=0}^\infty H_m \delta_{A_m}$, where $A_m \searrow 0$. Let $a_n=b_n=A_n$ and $c_n=A_{n-1}$.

\begin{corollary}
\label{DecreasingAm}
Let $\mu$ be the measure
\begin{equation*}
    \mu = \sum_{m=0}^\infty H_m \delta_{A_m}
\end{equation*}
where $(H_m)_{m=0}^\infty$ and $(A_m)_{m=0}^\infty$ are sequences of positive numbers, such that $\sum_{m=0}^\infty H_m A_m < \infty$ and $A_m \searrow 0$.

Let $S=(S_t)$ be a non-decreasing, right-continuous L\'{e}vy process on $[0, \infty)$ with L\'{e}vy measure $\mu$ and drift $\gamma \geq 0$. Let $X=(X_t)=(W(S_t))$ be a SBM with subordinator $S$.

For all $n$, let
\begin{equation} \label{DecreasingAmRnDef}
    R_n := \sqrt{A_n \left( d \log(\frac{A_{n-1}}{A_n}) + 2 \log(\frac{H_n}{\sum_{m<n} A_m}) \right)}
\end{equation}
and
\begin{equation} \label{DecreasingAmThetaDef}
    \theta_n := \frac{\gamma+\sum_{m \geq n} H_m A_m}{\sum_{m<n} H_m}.
\end{equation}

If $A_{n-1} \gg R_n^2 \gg \max\{ A_n, \theta_n\}$,
then $X$ satisfies the conditions of Theorem \ref{MainResult}, and therefore $X$ does not satisfy $\SmallEHI$.
\end{corollary}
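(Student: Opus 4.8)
The plan is to derive this corollary from Theorem \ref{GeneralRecipe} by taking $a_n = b_n = A_n$ and $c_n = A_{n-1}$, and then to upgrade the conclusion ``$X$ fails $\EHI$'' to ``$X$ fails $\SmallEHI$'' by observing that the counterexample scales tend to $0$.

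First I would check that $\mu$ is the L\'evy measure of a subordinator: since $1 \wedge x \le x$, we have $\int_{(0,\infty)} (1 \wedge x)\,\mu(dx) \le \sum_m H_m A_m < \infty$ by hypothesis, so an $S$ with L\'evy measure $\mu$ and drift $\gamma$ exists. (This is where the hypothesis $\sum_m H_m A_m < \infty$ is used, in contrast to the increasing case where $\sum_m H_m < \infty$ suffices.) Next I would record the specializations of the objects in Theorem \ref{GeneralRecipe}. Because $A_m \searrow 0$, one has $\mu((A_n,\infty)) = \sum_{m<n} H_m$, $\mu([A_n,A_n]) = H_n$, and $\int_{(0,A_n]} s\,\mu(ds) = \sum_{m\ge n} H_m A_m$; substituting these into \eqref{GeneralRecipeRnDef} and \eqref{GeneralRecipeThetaDef} produces exactly the $R_n$ and $\theta_n$ of \eqref{DecreasingAmRnDef} and \eqref{DecreasingAmThetaDef}. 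The requirement $0 < a_n \le b_n \le c_n$ holds since $A_n \le A_{n-1}$, and the growth condition \eqref{GeneralRecipeCondition}, namely $c_n \gg R_n^2 \gg \max\{b_n,\theta_n\}$, is precisely the hypothesis $A_{n-1} \gg R_n^2 \gg \max\{A_n,\theta_n\}$.

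The only substantive point is the verification of condition \eqref{cn}, which here reads $\frac{\sum_{m<n} H_m A_m^{-d/2}}{\sum_{m<n} H_m} \le A_{n-1}^{-d/2}$. This is immediate from monotonicity: for every $m < n$ we have $A_m \ge A_{n-1}$, hence $A_m^{-d/2} \le A_{n-1}^{-d/2}$, and averaging against the weights $H_m$ gives the bound. With all hypotheses of Theorem \ref{GeneralRecipe} confirmed, $X$ satisfies the conditions of Theorem \ref{MainResult} (with $c = 1/2$, $\alpha = 1/20$) and so fails $\EHI$.

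It remains to see that the failure occurs at small scales. In the proof of Theorem \ref{MainResult} the harmonic functions $f_n$ witnessing the breakdown of Harnack are harmonic on the balls $B(0,R_n)$, and the ratio $\bigl(\sup_{B(0,(1-\alpha/2)R_n)} f_n\bigr)/f_n(-x_n)$ tends to $\infty$; moreover \eqref{GeneralRecipeCondition} gives $R_n^2 \ll A_{n-1} \to 0$, so $R_n \to 0$. Hence for any proposed $C_1, C_2 > 0$ (and the single value $\kappa = 1-\alpha/2$, which suffices since $\SmallEHI$ is $\kappa$-independent) we may choose $n$ so large that $R_n \le C_1$ and the Harnack ratio for $f_n$ exceeds $C_2$, contradicting \eqref{SmallHarDef}. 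The argument is otherwise routine; beyond the bookkeeping of which indices survive in each sum, the decreasing structure of $(A_m)$ enters only through the one-line verification of \eqref{cn}.
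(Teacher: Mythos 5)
Your proposal is correct and follows essentially the same route as the paper, which obtains Corollary \ref{DecreasingAm} precisely by applying Theorem \ref{GeneralRecipe} with $a_n=b_n=A_n$ and $c_n=A_{n-1}$; your extra details (the integrability check $\int(1\wedge x)\,\mu(dx)\le\sum_m H_mA_m$, the one-line verification of \eqref{cn} via $A_m\ge A_{n-1}$ for $m<n$, and the observation that $R_n\to0$ turns the failure of $\EHI$ into a failure of $\SmallEHI$) are exactly the steps the paper leaves implicit. One small remark: your substitution yields $2\log\bigl(H_n/\sum_{m<n}H_m\bigr)$ in \eqref{DecreasingAmRnDef}, which is the intended quantity (consistent with Example \ref{SmallScaleDiracSumExample}); the $\sum_{m<n}A_m$ appearing in the corollary's display is evidently a typo, so your reading is the right one.
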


This gives us an example of a SBM such that $\SmallEHI$ fails.

\begin{example} \label{SmallScaleDiracSumExample}
Let $\mu$ be the measure
\begin{equation*}
    \mu = \sum_{m=0}^\infty H_m \delta_{A_m}
\end{equation*}
where $H_m=1$ and $A_m=2^{-m^2}$.

Let $S=(S_t)$ be a non-decreasing, right-continuous L\'{e}vy process on $[0, \infty)$ with L\'{e}vy measure $\mu$ and drift $0$. Let $X=(X_t)=(W(S_t))$ be a SBM with subordinator $S$.
Then $X$ does not satisfy $\SmallEHI$.
\end{example}

\begin{proof}
The quantities from \eqref{DecreasingAmRnDef} and \eqref{DecreasingAmThetaDef} are
\begin{align*}
    R_n^2 &= 2^{-n^2} \left( d \log(\frac{2^{-n^2+2n-1}}{2^{-n^2}}) + 2\log(\frac{1}{n}) \right)\\
    &= 2^{-n^2} \left( \frac{d}{\log_2(e)} \cdot (2n-1) - 2\log(n) \right)
\end{align*}
and
\begin{equation} \label{DecreasingAmThetaSum}
    \theta_n = \frac{ \sum_{m=n}^\infty 2^{-m^2} }{n}.
\end{equation}
Since each term of the sum in \eqref{DecreasingAmThetaSum} is less than half of the last term,
\begin{equation*}
    \theta_n \leq 2 \cdot \frac{2^{-n^2}}{n}.
\end{equation*}
We therefore have $A_{n-1}\gg R_n^2 \gg \max\{A_n, \theta_n\}$. By Corollary \ref{DecreasingAm}, $X$ does not satisfy $\SmallEHI$.
\end{proof}

\subsection{A counterexample in which the subordinator has a continuous, decreasing L\'{e}vy measure}

In Examples \ref{LargeScaleDiracSumExample} and \ref{SmallScaleDiracSumExample}, the L\'{e}vy measure $\mu$ of the subordinator $S$ has large gaps: there are intervals $(a, b)$, with $0 < a \ll b$, such that $\mu((a, b))=0$, even though $\mu((0, a])>0$ and $\mu([b, \infty))>0$.

A natural question to ask is: does there exist a subordinated Brownian motion $(X_t)=(W(S_t))$, which does not satisfy $\EHI$, such that the L\'{e}vy measure of $S$ is absolutely continuous with respect to the Lebesgue measure on $(0, \infty)$, and its density is decreasing?

In this subsection, we prove that the answer to this question is ``yes."

To motivate our argument, consider the following observation. In Examples \ref{LargeScaleDiracSumExample} and \ref{SmallScaleDiracSumExample}, the subordinator $S$ only takes jumps of size $A_m$ (for some $m$). As a result, almost every jump that the process $X$ takes is on the order of $\sqrt{A_m}$ for some $m$, and there are large gaps between each $A_m$ and $A_{m+1}$. Despite this, the jump kernel $j(r)$ of $X$ is still decreasing in $r$, since it is a sum of Gaussians.

In the next example, instead of $S$ taking only jumps of size $A_m$ for some $m$, we will have $S$ take jumps of size $A_m Y$, where $Y$ is a random variable sampled from the standard exponential distribution ($\mathbb{P}(Y>y)=e^{-y}$ for all $y \geq 0$). This way, the L\'{e}vy measure of $S$ is a sum of continuous, decreasing exponentials. Each time such a jump in $S$ occurs, the process $X=(X_t)=(W(S_t))$ takes a jump with displacement $\sqrt{A_m Y}Z$, where $Z$ is a standard $d$-dimensional Gaussian independent from $Y$. Therefore, almost every jump in $X$ has magnitude on the order of $\sqrt{A_m}$, just like in Examples \ref{LargeScaleDiracSumExample} and \ref{SmallScaleDiracSumExample}, and we can use an argument very similar to the proof of Theorem \ref{GeneralRecipe} to show that $X$ does not satisfy $\EHI$.

For the rest of this subsection, let $d=1$, and let $X=(X_t)$ be SBM on $\mathbb{R}$ described as follows.

For all $m \in \mathbb{Z}_+$, let $H_m=2^{-m}$ and $A_m=2^{m^2}$. For each $m \in \mathbb{Z}_+$, consider a Poisson process $N^m=(N^m_t)_{t \geq 0}$, with rate $H_m$. Suppose all of these Poisson processes are independent.

Let $Y$ be a standard exponential random variable (so $Y$ has density $f_Y(y)=e^{-y}$ on $(0, \infty)$, and $\mathbb{P}(Y>y)=e^{-y}$ for all $y \geq 0$). Let $(Y_{m, i})_{m \in \mathbb{Z}_+, i \in \mathbb{N}}$ be a collection of independent, identically-distributed copies of $Y$, indexed by $(m, i) \in \mathbb{Z}_+ \times \mathbb{N}$.

Let $S=(S_t)_{t \geq 0}$ be the non-negative L\'{e}vy process defined by
\begin{equation*}
    S_t := \sum_{m=0}^\infty \sum_{i=1}^{N^m_t} A_m Y_{m, i}.
\end{equation*}
In other words, events in $N^m$ occur with rate $H_m$, and each time such an event occurs, the process $S$ takes a jump of size $A_m$ times a standard exponential.

Then $S$ is a non-decreasing, right-continuous L\'{e}vy process on $[0, \infty)$. For each $m$, if $Y$ is a standard exponential, then $A_m Y$ has density $A_m^{-1} e^{-x/A_m}$. Therefore, the L\'{e}vy measure $\mu$ of $S$ is given by
\begin{equation} \label{LevyMeasureContinuousExample}
    \mu(dx) = \sum_{m=0}^\infty \frac{H_m}{A_m} e^{-x/A_m} \, dx , \quad x \in (0, \infty).
\end{equation}

Let $W=(W_t)_{t \geq 0}$ be a standard Brownian motion on $\mathbb{R}$, independent of $S$, and let $X=(X_t):=(W(S_t))$ be a SBM with subordinator $S$.

We would now like to calculate the jump kernel of $X$.
The following lemma tells us the value of an integral that will come up in this calculation.

\begin{lemma} \label{IntegralThatComesUp}
For all $r \geq 0$,
\begin{equation*}
    \int_0^\infty \exp (-\left( t^2 + \frac{r^2}{t^2} \right)) \, dt = \frac{\sqrt{\pi}}{2} e^{-2r}.
\end{equation*}
\end{lemma}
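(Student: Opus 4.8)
The plan is to reduce the integral to the standard Gaussian integral $\int_{-\infty}^\infty e^{-v^2}\,dv = \sqrt{\pi}$ by completing the square in the exponent. The case $r=0$ is literally $\int_0^\infty e^{-t^2}\,dt = \tfrac{\sqrt{\pi}}{2}$, so from now on assume $r>0$.

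First I would rewrite the exponent using $t^2 + \tfrac{r^2}{t^2} = \big(t - \tfrac{r}{t}\big)^2 + 2r$, which gives
\[
\int_0^\infty \exp\!\Big(-\big(t^2 + \tfrac{r^2}{t^2}\big)\Big)\,dt \;=\; e^{-2r}\int_0^\infty e^{-(t - r/t)^2}\,dt .
\]
So it remains to prove that $J(r) := \int_0^\infty e^{-(t - r/t)^2}\,dt = \tfrac{\sqrt{\pi}}{2}$ for every $r>0$; note $J(r)$ is finite because the integrand decays like a Gaussian at both ends.

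Next I would symmetrize. The substitution $u = r/t$ (so $t = r/u$, $dt = -\,r u^{-2}\,du$, and the limits flip) maps $(t-r/t)^2$ to $(u - r/u)^2$ and yields $J(r) = \int_0^\infty e^{-(u - r/u)^2}\,\tfrac{r}{u^2}\,du$. Averaging this with the original formula for $J(r)$ gives
\[
2J(r) \;=\; \int_0^\infty e^{-(t - r/t)^2}\Big(1 + \tfrac{r}{t^2}\Big)\,dt .
\]
The key observation is that $v := t - r/t$ defines a smooth strictly increasing bijection of $(0,\infty)$ onto $(-\infty,\infty)$, with $dv = \big(1 + \tfrac{r}{t^2}\big)\,dt$: strict monotonicity holds since $\tfrac{dv}{dt} = 1 + \tfrac{r}{t^2} > 0$, and surjectivity follows from $v\to-\infty$ as $t\to0^+$ and $v\to+\infty$ as $t\to\infty$. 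Applying this change of variables turns the last display into $2J(r) = \int_{-\infty}^\infty e^{-v^2}\,dv = \sqrt{\pi}$, hence $J(r) = \tfrac{\sqrt{\pi}}{2}$ and the lemma follows.

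There is essentially no hard step here; the only point requiring a line of care is justifying that $v = t - r/t$ is a legitimate change of variables on $(0,\infty)$ (strict monotonicity together with the limiting endpoint values), together with checking that all the integrals involved converge so that the symmetrization and substitution are valid. Both are routine since every integrand in sight is dominated by a Gaussian.
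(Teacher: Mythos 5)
Your proof is correct, but it follows a genuinely different route from the paper. The paper treats the integral as a function $F(r)$, notes $F(0)=\sqrt{\pi}/2$, differentiates under the integral sign (justifying this via monotonicity of the difference quotients), applies the substitution $s=r/t$ to identify $F'(r)=-2F(r)$, and solves the resulting ODE. You instead complete the square, $t^2+\frac{r^2}{t^2}=\left(t-\frac{r}{t}\right)^2+2r$, and then use the self-reciprocal substitution $u=r/t$ together with the change of variables $v=t-\frac{r}{t}$ (the classical Cauchy--Schl\"omilch device) to collapse the integral directly onto the Gaussian integral. The trade-off: your argument is entirely elementary --- every step is a substitution, and there is no differentiation under the integral sign to justify, which is the one slightly delicate point in the paper's version --- whereas the paper's ODE method requires knowing the answer's form only implicitly and adapts readily to other parameterized integrals of this type. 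Your endpoint checks (monotonicity and surjectivity of $t\mapsto t-r/t$, and Gaussian domination of all integrands) are exactly the right things to verify, so there is no gap.
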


\begin{proof}
Let $F(r)$ be the value of the integral in question. It is well-known that $F(0) = \frac{\sqrt{\pi}}{2}$, by the Gaussian integral. We will show that $F$ solves the differential equation $F'(r) = -2F(r)$.

By differentiating under the integral sign,
\begin{equation} \label{PreCov}
    F'(r) = \int_0^\infty \exp (-\left( t^2 + \frac{r^2}{t^2} \right)) \cdot -\frac{2r}{t^2} \, dt.
\end{equation}
(This differentiation under the integral sign is justified by Tonelli's theorem, since the difference quotient $$\frac{1}{h} \left[\exp(-\left( t^2 + (r+h)^2/t^2 \right))-\exp(-\left( t^2 + r^2/t^2 \right)) \right]$$ is non-positive for all $h > 0$.)

Let us apply the change of variables $s=r/t$ (so $ds=-r/t^2 \, dt$). Then \eqref{PreCov} becomes
\begin{align*}
    F'(r) &= - \int_0^\infty \exp(-\left( \frac{r^2}{s^2} + s^2 \right)) \cdot 2 \, ds\\
    &= -2 F(r).
\end{align*}
Thus, $F$ solves the differential equation $\{F'(r)=-2F(r); F(0)=\sqrt{\pi}/2\}$, the solution to which is $F(r) = \frac{\sqrt{\pi}}{2} e^{-2r}$.
\end{proof}

\begin{lemma}
The jump kernel of $X$ is
\begin{equation}  \label{JumpKernelXContinuousExample}
    j(r) = \sum_{m=0}^\infty H_m \sqrt{\frac{1}{2A_m}} \exp(-\sqrt{\frac{2}{A_m}}r).
\end{equation}
\end{lemma}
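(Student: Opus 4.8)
The plan is to start from the general SBM jump-kernel formula \eqref{JumpKernel}, which in dimension $d=1$ reads $j(r) = \int_{(0,\infty)} (2\pi s)^{-1/2}\exp(-r^2/(2s))\,\mu(ds)$, and substitute the explicit density \eqref{LevyMeasureContinuousExample} of $\mu$. Since all integrands are non-negative, Tonelli's theorem lets me interchange the sum over $m$ with the $s$-integral, so the problem reduces to evaluating, for each $m$,
\[
    I_m := \int_0^\infty (2\pi s)^{-1/2}\exp\!\left(-\frac{r^2}{2s}-\frac{s}{A_m}\right) ds,
\]
after which $j(r) = \sum_{m=0}^\infty \frac{H_m}{A_m}\, I_m$.

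The key step is to recognize $I_m$ as an instance of the integral computed in Lemma \ref{IntegralThatComesUp}. Applying the change of variables $s = A_m t^2$ (so $ds = 2A_m t\,dt$ and $(2\pi s)^{-1/2} = (2\pi A_m)^{-1/2}\,t^{-1}$) turns the exponent into $-(t^2 + \tilde r^2/t^2)$ with $\tilde r := r/\sqrt{2A_m}$, and the factor $t^{-1}$ cancels one factor of $t$, giving $I_m = (2\pi A_m)^{-1/2}\cdot 2A_m\int_0^\infty \exp(-(t^2 + \tilde r^2/t^2))\,dt$. By Lemma \ref{IntegralThatComesUp} the remaining integral equals $\tfrac{\sqrt\pi}{2}\,e^{-2\tilde r}$, and the constant simplifies as $(2\pi A_m)^{-1/2}\cdot 2A_m\cdot \tfrac{\sqrt\pi}{2} = \sqrt{A_m/2}$, while $2\tilde r = \sqrt{2/A_m}\,r$; hence $I_m = \sqrt{A_m/2}\,\exp(-\sqrt{2/A_m}\,r)$.

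Substituting this back yields $j(r) = \sum_{m=0}^\infty \frac{H_m}{A_m}\sqrt{A_m/2}\,\exp(-\sqrt{2/A_m}\,r) = \sum_{m=0}^\infty H_m\sqrt{\tfrac{1}{2A_m}}\,\exp(-\sqrt{2/A_m}\,r)$, which is exactly \eqref{JumpKernelXContinuousExample}. There is no genuine obstacle: the only points requiring a word of care are the Tonelli justification for exchanging the sum and the integral, and keeping the change-of-variables bookkeeping straight so that the constant collapses to $\sqrt{1/(2A_m)}$ and the exponential rate to $\sqrt{2/A_m}$; both are routine.
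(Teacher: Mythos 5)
Your proof is correct. It takes a slightly different, more analytic route than the paper: you plug the density \eqref{LevyMeasureContinuousExample} of the subordinator's L\'{e}vy measure into the general SBM jump-kernel formula \eqref{JumpKernel} (with $d=1$), exchange the sum and the integral by Tonelli, and evaluate each term by the substitution $s=A_m t^2$ followed by Lemma \ref{IntegralThatComesUp}; your bookkeeping (the constant collapsing to $\sqrt{A_m/2}$ inside $I_m$, hence $H_m\sqrt{1/(2A_m)}$ after dividing by $A_m$, and the rate $2\tilde r=\sqrt{2/A_m}\,r$) checks out. The paper instead argues probabilistically: it computes the density of a single jump displacement $\sqrt{A_m Y}\,Z$ by conditioning on the exponential $Y$ and taking a limit of small-interval probabilities, reduces that integral to Lemma \ref{IntegralThatComesUp} via $y=t^2$, and then sums over $m$ weighted by the Poisson rates $H_m$. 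The two computations are essentially the same integral in different clothing ($s=A_m y$ links them), but your version is more economical given that \eqref{JumpKernel} and \eqref{LevyMeasureContinuousExample} are already recorded in the paper, while the paper's version gives a self-contained probabilistic picture of each jump as $\sqrt{A_m Y}\,Z$, which feeds the heuristic used elsewhere in that example. No gaps to flag.
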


\begin{proof}
Consider what happens at a time $t$ when an event occurs in the Poisson process $N^m$. The subordinator $S$ takes a jump of size $A_m$ times a standard exponential: $S_t-S_{t-} \overset{d}{=} A_m Y$. This means that $X$ takes a jump with displacement $\sqrt{A_m Y}Z$, where $Y$ is a standard exponential, and $Z$ is a standard Gaussian independent from $Y$:
\begin{align*}
    X_t - X_{t-} &= W(S_t) - W(S_{t-})\\
    &= W(S_{t-} + A_m Y) - W(S_{t-})\\
    &= \sqrt{A_m Y}Z.
\end{align*}
Let us calculate the density of $\sqrt{A_m Y}Z$ at some $x \in \mathbb{R}$. Consider a small $\varepsilon>0$. By conditioning on $Y$, the probability that $\sqrt{A_m Y}Z$ is in the ball $B(x, \varepsilon)$ is equal to
\begin{align*}
    \mathbb{P}\left( \sqrt{A_m Y}Z \in B(x, \varepsilon) \right) &= \int_0^\infty e^{-y} \, \mathbb{P} \left( \sqrt{A_m y}Z \in B(x, \varepsilon) \right) \, dy\\
    &= \int_0^\infty e^{-y} \, \mathbb{P} \left(Z \in B\left(\frac{x}{\sqrt{A_m y}}, \frac{\varepsilon}{\sqrt{A_m y}}\right) \right) \, dy.
\end{align*}
Thus, the density of $\sqrt{A_m Y}Z$ at $x$ is equal to
\begin{align*}
    \lim_{\varepsilon\to 0} \frac{1}{2\varepsilon} \mathbb{P}\left( \sqrt{A_m Y}Z \in [x-\varepsilon, x+\varepsilon] \right) &= \lim_{\varepsilon\to 0} \frac{1}{2\varepsilon} \int_0^\infty e^{-y} \, \mathbb{P} \left(Z \in \left[\frac{x-\varepsilon}{\sqrt{A_m y}}, \frac{x+\varepsilon}{\sqrt{A_m y}}  \right] \right) \, dy\\
    &= \lim_{\varepsilon\to 0} \frac{1}{2\varepsilon} \int_0^\infty e^{-y} \int_{(x-\varepsilon) / \sqrt{A_m y}}^{(x+\varepsilon) / \sqrt{A_m y}} \frac{1}{\sqrt{2\pi}} e^{-z^2/2} \, dz \, dy\\
    &= \frac{1}{\sqrt{2\pi}} \int_0^\infty e^{-y} \lim_{\varepsilon\to 0} \frac{2\varepsilon/\sqrt{A_m y}}{2\varepsilon} \exp(-\frac{x^2}{2A_m y}) \, dy\\
    &= \frac{1}{\sqrt{2\pi A_m}} \int_0^\infty e^{-y} y^{-1/2} \exp(-\frac{x^2}{2A_m y}) \, dy.
\end{align*}
By the substitution $y=t^2$ (and $dy=2t \, dt$), this becomes
\begin{align*}
    \sqrt{\frac{2}{\pi A_m}} \int_0^\infty \exp(-\left(t^2 + \frac{x^2}{2 A_m t^2}\right)) \, dt.
\end{align*}
By applying Lemma \ref{IntegralThatComesUp} to $r=|x|/\sqrt{2 A_m}$, the density of $\sqrt{A_m Y}Z$ at $x$ is equal to
\begin{equation*}
    \sqrt{\frac{2}{\pi A_m}} \cdot \frac{\sqrt{\pi}}{2} \exp(-\sqrt{\frac{2}{A_m}}|x|) = \sqrt{\frac{1}{2A_m}} \exp(-\sqrt{\frac{2}{A_m}}|x|).
\end{equation*}

Recall that this was the density of a jump in $X$ corresponding to an event in $N^m$. Since these jumps occur with rate $H_m$ for all $m$, the jump kernel of $X$ is
\begin{equation*}
    j(r) = \sum_{m=0}^\infty H_m \cdot \sqrt{\frac{1}{2A_m}} \exp(-\sqrt{\frac{2}{A_m}}r).
\end{equation*}
\end{proof}

Now we can complete the proof that this SBM does not satisfy $\EHI$. We will use almost the exact same proof strategy as we did in Theorem \ref{GeneralRecipe}, but with \eqref{JumpKernelXContinuousExample} now playing the role of \eqref{JumpKernel}.

\begin{example} \label{ContinuousExample}
For all $m$, let $H_m=2^{-m}$ and $A_m=2^{m^2}$. Let $S=(S_t)$ be the L\'{e}vy process on $[0, \infty)$ such that for all $m \in \mathbb{Z}_+$, $S$ takes jumps of size $A_m$ times a standard exponential random variable, with rate $H_m$. Let $X=(X_t)$ be the SBM on $\mathbb{R}$ with subordinator $S$.

Then the L\'{e}vy measure of $S$ is continuous, and its density is decreasing. Furthermore, $X$ does not satisfy $\EHI$.
\end{example}

\begin{proof}
The density of the L\'{e}vy measure of $S$ is given by \eqref{LevyMeasureContinuousExample}. It is a sum of decreasing functions, and therefore is itself decreasing.

All that remains is to show that $X$ does not satisfy $\EHI$. To do this, we will very closely follow the proof of Theorem \ref{GeneralRecipe}. For all $n$, let
\begin{equation*}
    R_n := \sqrt{\frac{A_n}{8}} \log(\frac{A_{n+1}}{A_n}).
\end{equation*}
It follows from the definitions of $H_n$ and $R_n$ (and a bit of symbolic manipulation) that
\begin{equation*}
    H_n \sqrt{\frac{1}{2A_n}} \exp(-\sqrt{\frac{2}{A_n}} R_n) = 2^{-n} \sqrt{\frac{1}{2A_{n+1}}}
\end{equation*}
and
\begin{equation*}
    \sum_{m =n+1}^\infty H_m \sqrt{\frac{1}{2A_m}} \exp(-\sqrt{\frac{2}{A_m}} R_n) \leq \sum_{m=n+1}^\infty 2^{-m} \sqrt{\frac{1}{2A_{n+1}}} = 2^{-n} \sqrt{\frac{1}{2A_{n+1}}}.
\end{equation*}
Thus,
\begin{equation*} 
    \sum_{m =n+1}^\infty H_m \sqrt{\frac{1}{2A_m}} \exp(-\sqrt{\frac{2}{A_m}} R_n) \leq H_n \sqrt{\frac{1}{2A_n}} \exp(-\sqrt{\frac{2}{A_n}} R_n).
\end{equation*}

Therefore, when we consider the sum $j(R_n) = \sum_{m=0}^\infty H_m \sqrt{1/(2A_m)} \exp(-\sqrt{2/A_m}R_n)$, the terms from $m=0$ to $m=n$ contribute at least half of the total value:
\begin{equation}\label{NContributesMoreContinuousExample}
    \sum_{m=0}^n H_m \sqrt{\frac{1}{2A_m}} \exp(-\sqrt{\frac{2}{A_m}} R_n) \geq \frac12 \sum_{m=0}^\infty H_m \sqrt{\frac{1}{2A_m}} \exp(-\sqrt{\frac{2}{A_m}} R_n).
\end{equation}

For all $n$, let $f(n) := R_n/\sqrt{A_n} = \frac18 \log(A_{n+1} / A_n)$. Note that $A_{n+1} / A_n \to \infty$, so $f(n) \to \infty$.
For all $m \leq n$,
\begin{equation}\label{ContinuousExampleComparingExps}
    \frac{\exp(-\sqrt{\frac{2}{A_m}}\cdot\frac{R_n}{2})}{\exp(-\sqrt{\frac{2}{A_m}}R_n)} = \exp(\sqrt{\frac{2}{A_m}}\cdot\frac{R_n}{2}) \geq \exp(\sqrt{\frac{2}{A_n}}\cdot\frac{R_n}{2}) = \exp(\frac{f(n)}{\sqrt{2}})\to \infty
\end{equation}

Comparing $j(R_n/2)$ and $j(R_n)$,
\begin{align*}
    j(R_n/2) &= \sum_{m=0}^\infty H_m \sqrt{\frac{1}{2A_m}} \exp(-\sqrt{\frac{2}{A_m}} \cdot \frac{R_n}{2}) &\mbox{(by \eqref{JumpKernelXContinuousExample})}\\
    &\geq \sum_{m=0}^n H_m \sqrt{\frac{1}{2A_m}} \exp(-\sqrt{\frac{2}{A_m}} \cdot \frac{R_n}{2})\\
    &\geq \exp(\frac{f(n)}{\sqrt{2}}) \sum_{m=0}^n H_m \sqrt{\frac{1}{2A_m}} \exp(-\sqrt{\frac{2}{A_m}} R_n) &\mbox{(by \eqref{ContinuousExampleComparingExps})}\\
    &\geq \frac12 \exp(\frac{f(n)}{\sqrt{2}}) \sum_{m=0}^\infty H_m \sqrt{\frac{1}{2A_m}} \exp(-\sqrt{\frac{2}{A_m}} R_n) &\mbox{(by \eqref{NContributesMoreContinuousExample})}\\
    &= \frac12 \exp(\frac{f(n)}{\sqrt{2}}) j(R_n) &\mbox{(by \eqref{JumpKernelXContinuousExample})}.
\end{align*}
In other words,
\begin{equation} \label{ContinuousExampleJRatio}
    \frac{j(R_n)}{j(R_n/2)}\leq 2 \exp(-\frac{f(n)}{\sqrt{2}})\to 0.
\end{equation}

For all $n$, let $T_n$ be the first time that an event occurs in the Poisson process $N^m$ for any $m > n$. (In other words, $T_n$ is the first time that $S$ takes a jump of size $A_m$ times a standard exponential, for $m>n$). Let $\theta_n := \mathbb{E} \left[ S(T_n -) \right]$. It follows from the probabilistic interpretation of L\'{e}vy measure that
\begin{align*}
    \theta_n &= \frac{\sum_{m=0}^n H_m A_m}{\sum_{m=n+1}^\infty H_m} = 2^n \sum_{m=0}^n 2^{m^2-m}\\
    &=2^{n^2} \sum_{m=0}^n 2^{(m^2-m) - (n^2-n)}\\
    &= 2^{n^2} \left( 1 + \sum_{m=0}^{n-1} 2^{(m(m-1))-(n(n-1))} \right)\\
    &\leq 2^{n^2} \left( 1 + (n-1) 2^{((n-1)(n-2))-(n(n-1))} \right)\\
    &= 2^{n^2} \left(1 + (n-1)2^{-2(n-1)} \right).
\end{align*}
Thus, $\theta_n$ is on the order of $2^{n^2}$ for large $n$.

Note also that
\begin{align*}
    R_n^2 &= \frac{A_n}{8} \left( \log(\frac{A_{n+1}}{A_n}) \right)^2 = \frac{2^{n^2}}{8} \left( \log(\frac{2^{n^2+2n+1}}{2^{n^2}}) \right)^2\\
    &= \frac{2^{n^2}}{8} \left( \frac{2n+1}{\log_2 e} \right)^2
\end{align*}
so $R_n^2$ is on the order of $2^{n^2} n^2$. Recall that $A_{n+1} = 2^{(n+1)^2}$. We therefore have $A_{n+1} \gg R_n^2 \gg \theta_n$.

Choose some sequences $(Q_n)$ and $(q_n)$ such that $A_{n+1} \gg Q_n \gg R_n^2 \gg q_n \gg \theta_n$.
By the same argument as in the end of the proof of Theorem \ref{GeneralRecipe}, with high probability, $|X_t|$ is less than $\sqrt{q_n}$ for all $t < T_n$, and $|X(T_n)|$ is greater than $\sqrt{Q_n}$. We therefore have
\begin{equation} \label{ContinuousEampleIntermediateJumps}
    \mathbb{P}_0 \left( X_{\tau_{B(0, R_n/20)}} \in B(0, 10 R_n) \right) \to 0.
\end{equation}

By \eqref{ContinuousExampleJRatio} and \eqref{ContinuousEampleIntermediateJumps}, $X$ satisfies the conditions of Theorem \ref{MainResult} for $c=1/2$ and $\alpha=1/20$, so $X$ does not satisfy $\EHI$.
\end{proof}

\begin{figure}[ht]
\begin{framed}
\captionof{figure}{The ratio $j(r) / j(r/2)$ for Examples
\ref{LargeScaleDiracSumExample} and \ref{ContinuousExample}. In both cases, the $x$-axis is distorted so that $\sqrt{A_m}$ and $\sqrt{A_{m+1}}$ are the same distance away for all $m$. The points corresponding to $r=R_m$ are shown using red dots. (In all cases, $\sqrt{A_m} \leq R_m \leq \sqrt{A_{m+1}}$.) Note that $R_m$ is not necessarily the exact local minimizer, but the limit along the sequence $\{R_m\}$ is $0$.}
\label{JRatioGraphs}
\includegraphics[width=480pt]{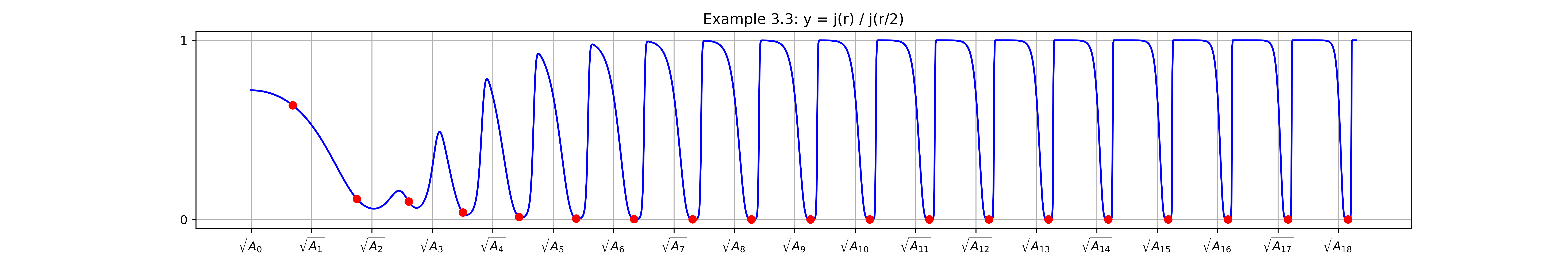}
\includegraphics[width=480pt]{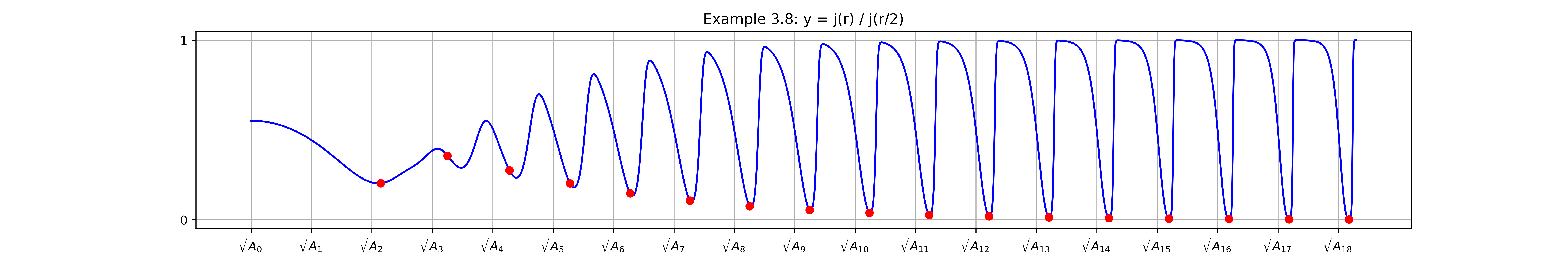}
\end{framed}
\end{figure}

\FloatBarrier

\end{document}